\theoremstyle{plain}
\newtheorem{thm}{Theorem}[section]
\newtheorem{cor}[thm]{Corollary}
\newtheorem{lem}[thm]{Lemma}
\newtheorem{conj}[thm]{Conjecture}
\theoremstyle{definition}
\newtheorem{defn}[thm]{Definition}
\theoremstyle{remark}
\newtheorem{rem}[thm]{Remark}
\newcommand{\Z}{\mathbb{Z}}
\newcommand{\Q}{\mathbb{Q}}
\begin{document}

\title[A Categorification of the HOMFLY-PT Polynomial]{A Categorification of the HOMFLY-PT Polynomial with a Spectral Sequence to Knot Floer Homology}

\author{Nathan Dowlin}
\date{}

\begin{abstract}

Let $E_{k}^{F}(D)$ be the spectral sequence induced by the oriented cube of resolutions on knot Floer homology. We prove that $E_{2}^{F}(D)$ is a triply graded link invariant whose graded Euler characteristic is the HOMFLY-PT polynomial and that the higher pages are link invariants. By construction, the spectral sequence converges to knot Floer homology. We show that the rank of the torsion-free part of $E_{2}^{F}(D)$ is the rank of HOMFLY-PT homology.

\end{abstract}

\maketitle

\section{Introduction}

The goal of this paper is to better understand the relationship between HOMFLY-PT homology and knot Floer homology. A relationship was first conjectured by Dunfield, Gukov and Rasmussen in \cite{Gukov}:

\begin{conj} \label{Guk} There is a spectral sequence from HOMFLY-PT homology to knot Floer homology.

\end{conj}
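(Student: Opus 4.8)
The plan is to exhibit both HOMFLY-PT homology and knot Floer homology as outputs of a single oriented cube-of-resolutions construction, so that the passage from the former to the latter becomes the spectral sequence of a filtration. Khovanov--Rozansky's HOMFLY-PT homology is built from an oriented cube of resolutions in which each vertex carries a Koszul matrix factorization (equivalently, after taking Hochschild homology, a Soergel bimodule), while Ozsv\'ath--Szab\'o's cube-of-resolutions model for knot Floer homology uses an oriented cube of the same combinatorial shape in which each vertex carries the quotient of a polynomial ring on edge variables by a ``quadratic'' ideal together with a ``linear'' ideal. The crucial observation is that the quadratic relations encode exactly the Koszul/matrix-factorization data of the Khovanov--Rozansky theory, whereas the linear relations supply precisely the extra data needed to pass to knot Floer homology. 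I would therefore build a single triply graded complex $C^{F}(D)$ whose differential decomposes, with respect to an auxiliary ``algebraic'' grading, into a Koszul-plus-cube part and a ``linear'' part, arranged so that the total homology is $\widehat{HFK}$ while the algebraic filtration degenerates the linear part and recovers the Khovanov--Rozansky construction on the early pages.

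Concretely, the steps are: (1) reformulate Ozsv\'ath--Szab\'o's oriented cube of resolutions as a triply graded complex $C^{F}(D)$ over the relevant edge ring, carrying the Maslov grading, the Alexander grading, and the algebraic/cube grading, and verify that its homology is knot Floer homology --- which requires reconciling basepoint conventions and passing to a suitably reduced model; (2) introduce the filtration by the algebraic grading and check that the induced $d_{0}$ on the associated graded is the direct sum over vertices of the internal Koszul-type differentials, that $d_{1}$ is the cube differential assembling the resulting vertex homologies into the Khovanov--Rozansky complex, and hence that $E_{1}^{F}(D)$ is the collection of vertex homologies while $E_{2}^{F}(D)$ agrees, up to grading conventions and possible torsion, with HOMFLY-PT homology; (3) deduce convergence to $\widehat{HFK}$ from boundedness of the filtration together with step (1); (4) prove that $E_{k}^{F}(D)$ is a link invariant for $k \ge 2$ by showing the Reidemeister chain homotopy equivalences for the Floer complex can be chosen to be filtered maps, so that they induce isomorphisms on all higher pages. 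The graded Euler characteristic statement then follows because $E_{2}^{F}(D)$ is HOMFLY-PT homology, which categorifies the HOMFLY-PT polynomial, and the rank statement follows from a rational comparison of $E_{2}^{F}(D)$ with HOMFLY-PT homology.

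The main obstacle is the interface between the two theories in steps (2) and (4). Matching Ozsv\'ath--Szab\'o's linear-plus-quadratic relations with the Koszul complexes of Khovanov--Rozansky at the associated-graded level demands a precise accounting of how the matrix factorizations at the oriented and singular resolutions are encoded in the edge-ring relations --- in particular of signs, internal gradings, and which component maps are ``vertical'' versus ``horizontal.'' Equally delicate is step (4): the Reidemeister invariance maps of the Floer-theoretic complex are a priori defined only up to homotopy and need not visibly preserve the algebraic filtration, so upgrading them to filtered homotopy equivalences --- which is exactly what promotes the higher pages, and not merely $\widehat{HFK}$, to invariants --- is the technical heart of the argument. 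Finally, the gap between ``$E_{2}^{F}(D)$ is HOMFLY-PT homology'' and the weaker statement that the torsion-free rank of $E_{2}^{F}(D)$ equals the rank of HOMFLY-PT homology warns that the change of model may introduce spurious torsion over the ground ring, so I would compare ranks, working over $\Q$ where convenient, rather than insist on a grading-preserving isomorphism of modules.
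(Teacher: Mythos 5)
The statement you are trying to prove is stated in the paper as a conjecture, and the paper does not prove it; your proposal founders on exactly the point that keeps it open. The crux is your step (2): after identifying the $E_{1}$ page of the cube filtration with the direct sum of vertex homologies, you need these vertex homologies to be identified with the Khovanov--Rozansky (HOMFLY-PT) homologies of the singular resolutions \emph{by an isomorphism that commutes with the induced edge maps}, so that $E_{2}^{F}(D)$ is literally the HOMFLY-PT complex's $E_{2}$ page. The isomorphism $H_{F}(S)\cong H_{H}(S)$ is known (this is part (a) of Manolescu's conjecture, proved in the author's earlier work), but it is not canonical, and there is currently no way to verify that it intertwines $d_{1}^{*}$ on the Floer side with the HOMFLY-PT edge maps; that compatibility is precisely part (b) of Manolescu's conjecture, which remains open. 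Without it, "the quadratic relations encode exactly the Koszul data" gives you an abstract isomorphism of $E_{1}$ pages and nothing about $E_{2}$. Your own closing hedge --- retreating to a comparison of ranks over $\Q$ and allowing "possible torsion" --- concedes this, but a rank equality of the torsion-free parts does not produce a spectral sequence whose $E_{2}$ page \emph{is} HOMFLY-PT homology, which is what the conjecture asserts.

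What the paper actually establishes is strictly weaker and is obtained by sidestepping the identification you assume: it shows that $E_{2}^{F}(D)$ and the higher pages are link invariants (via a local algebraic criterion --- a filtered equivalence with respect to the \emph{outer} cube filtration coming from Heegaard moves, plus the computation that the inner homology sits in a single inner cube grading --- rather than by choosing Reidemeister maps filtered with respect to the full cube filtration, which is the delicate point your step (4) glosses over), that its graded Euler characteristic is the HOMFLY-PT polynomial, and that its $U$-torsion-free part is isomorphic to $H_{H}(D)$. The conjecture itself would follow only if one could additionally show $E_{2}^{F}(D)$ is torsion-free, or prove the edge-map compatibility; neither is done in the paper, and your proposal supplies no new mechanism for either.
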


\noindent
Evidence for this relationship arose when Ozsv\'{a}th and Szab\'{o} developed the oriented cube of resolutions for knot Floer homology (\hspace{1sp}\cite{Szabo}) and noted that the algebra involved was reminiscent of HOMFLY-PT homology.

These similarities have been studied extensively by Manolescu (\hspace{1sp}\cite{Manolescu}), Gilmore (\hspace{1sp}\cite{Gilmore}), and the author (\hspace{1sp}\cite{Me}, \cite{Me2}). In particular, Manolescu defined an untwisted version of the cube of resolutions and conjectured a possible construction of the spectral sequence.

\begin{conj} \label{ManCon} Let $D$ be a braid diagram for a knot $K$, and let $S$ be a singular resolution of $D$. Define $H_{H}(S)$ to be the HOMFLY-PT homology of $S$ and $H_{F}(S)$ to be the homology of the complex assigned to $S$ by the knot Floer cube of resolutions.

\begin{enumerate}[label=(\alph*)] 
\item There is an isomorphism between $H_{F}(S)$ and $H_{H}(S)$.
\item This isomorphism commutes with the induced edge maps in the cube of resolutions.
\end{enumerate}

\end{conj}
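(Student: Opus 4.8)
The plan is to compute both $H_{F}(S)$ and $H_{H}(S)$ from a single algebraic model attached to the underlying singular MOY graph of $S$, working with coefficients in $\Z/2\Z$ throughout (both cube-of-resolutions theories are naturally defined there, which sidesteps sign bookkeeping). To a singular resolution $S$ one associates the polynomial ring $R(S) = \Z/2\Z[x_{e}]$, one variable per edge, together with, at each $4$-valent vertex $v$ having incoming edges $\{a,b\}$ and outgoing edges $\{c,d\}$, the linear relation $\ell_{v}=x_{a}+x_{b}+x_{c}+x_{d}$ and the quadratic relation $q_{v}=x_{a}x_{b}+x_{c}x_{d}$. First I would recall the Koszul-complex description of the HOMFLY-PT homology of a singular braid: after closing up the braid (the Hochschild-homology step), $H_{H}(S)$ is the homology of the Koszul complex on the sequence $\{q_{v}\}_{v}$ over $R(S)/(\{\ell_{v}\}_{v})$, with the triple grading recording Koszul degree, polynomial degree, and homological degree (trivial here since $S$ is a single resolution). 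Then I would put the knot Floer complex of $S$ (in Manolescu's untwisted formulation) into the same form: its underlying module is already $R(S)/(\{\ell_{v}\}) \otimes \Lambda^{\ast}(\text{vertex generators})$, and the claim to be proved is that its differential is exactly $\sum_{v} q_{v}\,\partial_{v}$, the Koszul differential on $\{q_{v}\}$.

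That identification is the heart of part (a), and I would prove it by induction on the number of singular vertices, removing them one at a time. Locally, a single ``merge then split'' vertex contributes one Koszul generator whose differential is $q_{v}$; the content is that Manolescu's local contribution to the differential at $v$, together with the way the edge variables of $v$ interact with the rest of the diagram, reproduces this. The nonlocal part of Manolescu's differential — the terms that see the closure of the braid — must be matched with the Hochschild-homology step on the HOMFLY-PT side; I expect this to follow once both are written in terms of the ``global'' relation coming from identifying the variables at the top and bottom of the braid. Lining up the three gradings is then a finite check of conventions, and yields the graded isomorphism $H_{F}(S)\cong H_{H}(S)$.

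For part (b), observe that an edge of the cube of resolutions joins a resolution $S_{0}$, in which some crossing is given the oriented smoothing, to the resolution $S_{1}$ in which that crossing is the singular vertex $v_{0}$; passing from $S_{0}$ to $S_{1}$ adjoins the relation $q_{v_{0}}$, hence one Koszul generator. Under the identifications above, the HOMFLY-PT edge map is the standard zip/unzip map between the bimodules $R$ and $B_{i}$, which on Koszul complexes is the inclusion or projection of the exterior-algebra factor composed with multiplication by an edge-variable difference (e.g. $x_{a}+x_{c}$); I would check that Manolescu's edge map, which only alters the diagram near $v_{0}$, is the same map, using that everything away from $v_{0}$ is literally unchanged and appealing to naturality of the Koszul construction in the added relation.

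The main obstacle is the inductive step in part (a): Manolescu's differential is a combinatorial repackaging of holomorphic counts, and recasting it as the Koszul differential on the quadratic vertex relations — especially controlling the terms that depend on the global structure of the closed braid rather than on a single vertex — is where the genuine work lies. If those nonlocal terms resist a direct match, a viable alternative is to interpose an auxiliary complex that visibly reduces to each of $H_{F}(S)$ and $H_{H}(S)$ and to route both (a) and (b) through it; this also makes the edge-map compatibility in (b) nearly automatic, since the auxiliary complex can be set up to carry a single, manifestly natural family of edge maps.
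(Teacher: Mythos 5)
This statement is a conjecture, not a theorem of the paper: the paper proves neither part, it only cites \cite{Me2} for part (a) (with a non-canonical isomorphism) and explicitly states that part (b) remains out of reach, which is why the whole invariance argument of Sections \ref{section3}--4 is engineered to avoid it. So your proposal should be judged as an attempt at an open problem, and it contains a genuine gap at exactly the known sticking point. The central claim of your part (a) --- that Manolescu's complex for a singular resolution $S$ is, on the nose, the Koszul complex on the local quadratic relations $q_{v}$ over $R(S)/(\ell_{v})$ --- is not true at the chain level. Manolescu's untwisted complex is a Koszul complex on the linear relations together with \emph{non-local} quadratic relations (one for each suitable collection of vertices/subsets of the diagram), and the holomorphic-disc differential does not simply repackage the local $q_{v}$'s. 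What is actually known, and what this paper uses, is weaker: the basepoint-filtered homology splits over cycles $Z$ as $\bigoplus_{Z}\widetilde{H}_{H}(S-Z)$ (Lemma \ref{bigthm}), and the identification of this direct sum with $\widetilde{H}_{H}(S)$ (Theorem \ref{thm2.4}) is an abstract isomorphism of bigraded vector spaces obtained by comparing graded dimensions via MOY-type decompositions, not an isomorphism induced by any chain map. Your proposed induction on vertices would have to produce precisely such a canonical chain-level identification, including control of the non-local terms coming from the braid closure; asserting that these "must be matched with the Hochschild-homology step" is naming the difficulty, not resolving it.

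This gap then propagates fatally into your part (b). Because the isomorphism in (a) is only known non-canonically, there is no preferred identification under which one can even ask whether the edge maps commute, and the structure of the knot Floer edge map makes it clear that it is not simply the zip/unzip map transported through an exterior-algebra inclusion: as computed in Section \ref{filterededgemaps} (Figure \ref{table2}), the induced edge map acts on the cycle summands by different operators ($\Delta_{\pm}$ on the empty cycle, the identity or multiplication by an edge variable on the turning cycles, zero on others), so compatibility with the HOMFLY-PT edge map could only hold after a specific, currently unknown, reassembly of these summands into $\widetilde{H}_{H}(S)$. Your fallback suggestion of interposing an auxiliary complex carrying a manifestly natural family of edge maps is exactly the missing idea that would constitute a solution of the conjecture; as written, the proposal assumes it rather than supplies it.
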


Note that Conjecture \ref{ManCon}a $+$ Conjecture \ref{ManCon}b $\implies$ Conjecture \ref{Guk}. In particular, if $E^{F}_{k}(D)$ denotes the $k$th page of the spectral sequence induced by the cube filtration on the knot Floer cube of resolutions and $E^{H}_{k}(D)$ denotes the $k$th page for the HOMFLY-PT cube of resolutions, then 

\[ \text{Conjecture } \ref{ManCon}\text{a} \implies E_{1}^{F}(D) \cong E_{1}^{H}(D) \]
\[ \text{Conjecture } \ref{ManCon}\text{b} \implies E_{2}^{F}(D) \cong E_{2}^{H}(D) \]\vspace{0mm}

\noindent
Since HOMFLY-PT homology is defined to be the $E_{2}$ page $E_{2}^{H}(D)$, this conjecture would prove that $E_{2}^{F}(D)$ is HOMFLY-PT homology and $E_{\infty}^{F}(D)$ is knot Floer homology.

In \cite{Me2}, we proved part (a) of the conjecture, but unfortunately the isomorphism was not canonical. Thus, we were unable to approach part (b). However, in this paper we will show that $E_{2}^{F}(D)$ satisfies the same properties that make HOMFLY-PT homology an interesting theory. 

\begin{thm}

Let $D$ be a braid diagram for a knot $K$. Then $E_{2}^{F}(D)$ is a triply-graded link invariant that categorifies the HOMFLY-PT polynomial. The higher pages of the spectral sequence are all link invariants, and by construction, the $E_{\infty}$ page is knot Floer homology.

\end{thm}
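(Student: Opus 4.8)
The plan is to assemble three ingredients: the construction itself, which yields the $E_\infty$ statement with no extra work; the (non-canonical) vertex-level identification with HOMFLY-PT homology from \cite{Me2}, which yields the graded Euler characteristic; and a Markov-move analysis of the filtered complex, which yields invariance of $E_2^F(D)$ and the higher pages. Throughout, recall that the oriented cube of resolutions equips the braid diagram $D$ with a complex $C_F(D)$ filtered by the cube (homological) grading, whose total homology is $\widehat{HFK}(K)$ by \cite{Szabo} (together with the identification in the author's earlier work); hence $E_\infty^F(D)\cong\widehat{HFK}(K)$ immediately, and this is the sense in which ``the spectral sequence converges to knot Floer homology.''

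Next I would set up the gradings. Each vertex complex $C_F(S)$ carries an Alexander grading and a Maslov grading, and the edge maps of the cube respect these up to the standard shifts depending on the resolution data; consequently $E_1^F(D)$, and therefore every later page, inherits a triply-graded structure obtained from the triple (Maslov, Alexander, cube) by a fixed linear change of variables. Verifying this is bookkeeping. For the Euler characteristic, since $E_2^F(D)=H_*(E_1^F(D),d_1)$ one has $\chi\bigl(E_2^F(D)\bigr)=\chi\bigl(E_1^F(D)\bigr)$, which is a signed, grading-shifted sum over singular resolutions $S$ of $\chi\bigl(H_F(S)\bigr)$. By \cite{Me2} each $H_F(S)$ is isomorphic, as a triply-graded group, to the HOMFLY-PT homology $H_H(S)$ of the corresponding singular braid, so this sum equals the matching sum for the HOMFLY-PT cube of resolutions, namely $\chi\bigl(E_1^H(D)\bigr)=\chi\bigl(E_2^H(D)\bigr)$, which is the HOMFLY-PT polynomial of $K$. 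Thus, modulo invariance, $E_2^F(D)$ categorifies the HOMFLY-PT polynomial.

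It remains to show that $E_2^F(D)$ and the higher pages depend only on $K$. Since $D$ is a braid diagram, by Markov's theorem it suffices to treat the braid-word relations, braid conjugation, and Markov (de)stabilization. For each move I would exhibit an explicit filtered chain homotopy equivalence between the two cube-of-resolutions complexes, or equivalently a chain of Gaussian-elimination (cancellation) reductions internal to the cube, grading-preserving up to the prescribed global shift. A filtered homotopy equivalence that is not necessarily filtration-strict still induces an isomorphism of the associated spectral sequences from some page onward: for the ``small'' moves (braid relations, conjugation, and the R2/R3-type rearrangements they encode) the equivalence should already be an isomorphism on $E_1$, hence on all later pages, whereas for Markov stabilization the cube acquires an extra coordinate and the two $E_1$ pages genuinely differ, the identification only being forced after passing to $d_1$-homology --- which is precisely why the theorem is stated for $E_2$ and beyond rather than for $E_1$.

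The main obstacle is the Markov stabilization move. One must analyze the local piece of the knot Floer cube of resolutions near a stabilizing crossing $\sigma_n^{\pm1}$ and show that, after taking $d_1$-homology, the contributions of the extra resolutions cancel, so that $E_2$ of the stabilized diagram is canonically identified with $E_2$ of the original; this is the knot Floer analog of the Koszul-complex simplification used for the Reidemeister I computation in HOMFLY-PT homology. Because the vertex-level isomorphism of \cite{Me2} is not canonical, one cannot transport the HOMFLY-PT computation across it, so the edge maps of the knot Floer complex near the stabilization must be computed directly and shown to have the required cancellation behavior on $E_1$. Pinning down these edge maps --- and checking that the resulting equivalence respects all three gradings up to the expected writhe/exponent-sum shift --- is where the real work lies; by contrast, invariance under the braid relations and conjugation should reduce to comparatively routine local homotopy equivalences already implicit in the cube-of-resolutions formalism.
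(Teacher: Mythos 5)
The uncontroversial parts of your plan are fine: $E_\infty$ is knot Floer homology by construction, and the graded Euler characteristic does follow from the vertex-wise identification of $H_F(S)$ with $H_H(S)$ from \cite{Me2} (modulo grading bookkeeping you gloss over: the identification matches Poincar\'e polynomials only after a substitution of the form $a \mapsto aq$ and a mirror, so one must re-index the triple grading to land on the standard HOMFLY-PT polynomial). The gap is in the invariance argument. You assert that for the ``small'' moves the filtered equivalence ``should already be an isomorphism on $E_1$,'' and that for stabilization one can ``compute the edge maps directly'' and watch the extra resolutions cancel after taking $d_1$-homology, with the higher pages following. Neither claim is substantiated, and both hide the actual difficulty. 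For any move that changes the number of crossings (braid-like R1/stabilization, R2, and the $6\leftrightarrow 0$ version of R3 used here) the $E_1$ pages genuinely differ, so the two complexes are \emph{not} filtered homotopy equivalent for the cube filtration and no Gaussian elimination ``internal to the cube'' can produce such an equivalence; moreover a filtered homotopy equivalence does not in general induce an isomorphism of spectral sequences ``from some page onward'' --- it does so from $E_1$ when the homotopies are filtered, which is exactly what is unavailable here. Finally, ``compute the knot Floer edge maps near the stabilization directly and show cancellation'' is not the knot Floer analog of a Koszul computation: the edge maps and, worse, the higher components of the differential are holomorphic-disc counts with no known algebraic model (this is precisely the open part (b) of Manolescu's conjecture), and controlling them directly is what the paper explicitly avoids.

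What is missing is a mechanism for promoting a weaker equivalence to an identification of $E_2$ \emph{and all higher pages}, and this is the paper's key step (Theorem \ref{thm3.1}). One splits the cube grading at each move into an inner part (the tangle being changed) and an outer part, and proves two things: (a) the two complexes are filtered homotopy equivalent with respect to the \emph{outer} filtration $\mathcal{F}_{out}$, which is achieved by extending the holomorphic-triangle maps for handleslides/isotopies away from the crossing regions (plus insertion/stabilization lemmas) to the whole cube; and (b) the local homology $H_{*}(H_{*}(C(D_{a}),d_{00}),d_{01}^{*})$ is concentrated in a single inner cube grading. Condition (b) forces the spectral sequences of $\mathcal{F}_{cube}$ and $\mathcal{F}_{out}$ to agree from $E_2$ onward, including all higher pages, without ever computing higher differentials or matching edge maps against HOMFLY-PT edge maps. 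Verifying (b) is the bulk of the paper: it uses the basepoint filtration, the decomposition over cycles, admissibility, the filtered edge-map computations ($1$, $U_i$, $\Delta_{\pm}$), and a case analysis over connectivities for the braid-like R1, R2, and R3 moves, with algebraic shortcuts such as Lemma \ref{easyhomlemma}. Your proposal has no substitute for this step --- ``show the contributions cancel after passing to $d_1$-homology'' restates the problem rather than solving it --- and it likewise offers no argument for why an identification of $E_2$ would propagate to the pages $E_k$, $k>2$, which in the paper follows only because, once the inner grading collapses, the two filtrations coincide.
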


This spectral sequence seems to have a lot in common with Rasmussen's spectral sequences $E_{k}(n)$ in \cite{Rasmussen}, where the $E_{2}$ page is HOMFLY-PT homology, the higher pages are link invariants, and the $E_{\infty}$ is $sl_{n}$ homology. If $E_{2}^{F}(D)$ turns out to be HOMFLY-PT homology, then this spectral sequence would fit into Rasmussen's framework as the $E_{k}(0)$ spectral sequence. 

We show that $E_{2}^{F}(D)$ does have more common with HOMFLY-PT homology than just the graded Euler characteristic. The middle HOMFLY-PT homology $H_{H}(D)$ is known to be a free $\mathbb{F}[U]$ module. We show that the $U$-torsion-free part of $E_{2}^{F}(D)$ is isomorphic to $H_{H}(D)$, where by $U$-torsion we mean the set of $x \in E_{2}^{F}(D)$ such that $U^{k}x=0$ for some $k$.

\begin{thm}

Let $T$ be the $U$-torsion part of $E_{2}^{F}(D)$. Then

\[ E_{2}^{F}(D) \cong H_{H}(D) \oplus T \]

\noindent
This isomorphism preserves two of the three gradings.

\end{thm}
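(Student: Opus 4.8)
The plan is to compare $E_2^F(D)$ with the HOMFLY-PT cube of resolutions after inverting $U$, exploiting the fact that the failure of the isomorphisms of \cite{Me2} to be canonical is a torsion phenomenon which disappears over $\mathbb{F}[U,U^{-1}]$.

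First I would reduce to an equality of graded ranks over the PID $\mathbb{F}[U]$. Each $H_F(S)$ is finitely generated over $\mathbb{F}[U]$, so the first page $E_1^F(D)=\bigoplus_S H_F(S)$ (the sum over the resolutions $S$ at the vertices of the cube) and its subquotient $E_2^F(D)$ are finitely generated, and the structure theorem gives a graded splitting $E_2^F(D)\cong F\oplus T$ with $F$ free and $T$ its torsion submodule. Since $H_H(D)$ is free, the theorem is equivalent to the assertion $F\cong H_H(D)$ in two of the three gradings; moreover two graded free $\mathbb{F}[U]$-modules are isomorphic in the two gradings along which $U$ acts with degree zero as soon as their graded ranks agree there, so it suffices to match those graded ranks of $F$ and of $H_H(D)$.

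Next I would compute these ranks by localizing at $U$. Localization is exact, so $E_2^F(D)\otimes_{\mathbb{F}[U]}\mathbb{F}[U,U^{-1}]$---which records exactly the graded rank of $F$---is the homology of $\bigl(E_1^F(D)\otimes\mathbb{F}[U,U^{-1}],\,d_1\bigr)$, and likewise $H_H(D)\otimes\mathbb{F}[U,U^{-1}]=E_2^H(D)\otimes\mathbb{F}[U,U^{-1}]$ is the homology of $\bigl(E_1^H(D)\otimes\mathbb{F}[U,U^{-1}],\,d_1^H\bigr)$. By Conjecture \ref{ManCon}(a), proved in \cite{Me2}, each $H_F(S)$ is isomorphic to $H_H(S)$ up to a torsion summand, so after inverting $U$ the torsion vanishes and $E_1^F(D)\otimes\mathbb{F}[U,U^{-1}]\cong E_1^H(D)\otimes\mathbb{F}[U,U^{-1}]$ as graded modules. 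The crux is to promote this to an isomorphism of \emph{complexes}, i.e.\ to show that under this identification $d_1$ is carried to the HOMFLY-PT edge differential $d_1^H$: this is the $U$-localized form of Conjecture \ref{ManCon}(b). The point is that the ambiguity obstructing \ref{ManCon}(b) in \cite{Me2} lives in the torsion of the modules $H_F(S)$, so after inverting $U$ each local isomorphism becomes canonical---for instance by exhibiting an intrinsic description of the $U$-localized knot Floer complex of a singular resolution that matches the corresponding localized matrix factorization (Koszul) complex on the HOMFLY-PT side---and hence automatically commutes with the elementary edge cobordisms. Granting this, the two localized first pages have isomorphic homology; comparing graded ranks in the two relevant gradings gives $F\cong H_H(D)$ there, and therefore $E_2^F(D)\cong H_H(D)\oplus T$, an isomorphism that preserves those two gradings but not the third, which is blurred by passing to the localization.

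The main obstacle is exactly this last identification: making precise the sense in which the $U$-localized knot Floer cube of resolutions is canonically the $U$-localized HOMFLY-PT cube, which requires unwinding the construction of \cite{Me2} and isolating the torsion in which its choices were made. Everything else---finite generation, the structure theorem over $\mathbb{F}[U]$, exactness of localization, and the fact that localization detects graded ranks---is routine homological algebra.
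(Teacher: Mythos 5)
There is a genuine gap at the step you yourself flag as the crux. Your argument needs the localized edge maps to agree: that after inverting $U$ the identification $H_{F}(S)\cong H_{H}(S)$ of \cite{Me2} becomes canonical and intertwines the knot Floer $d_{1}$ with the HOMFLY-PT edge maps. That is precisely Conjecture \ref{ManCon}(b) after localization, and nothing you say makes it more accessible than the original conjecture. In particular, the premise that the non-canonicity of \cite{Me2} is ``a torsion phenomenon'' is not right: both $H_{F}(S)$ and $H_{H}(S)$ are already $U$-torsion-free (by the isomorphism of \cite{Me2} together with the injectivity of $U$-multiplication on HOMFLY-PT homology of singular diagrams), so inverting $U$ kills nothing at the $E_{1}$ level and does not single out an isomorphism. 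The non-canonicity comes from the cycle decomposition and the regrading $\langle 1\rangle$ in the identification $\bigoplus_{Z}\widetilde{H}_{H}(S-Z)\{q(Z),a(Z)\}\cong\widetilde{H}_{H}(S)\langle 1\rangle$, not from torsion; and verifying compatibility with $d_{1}$ would require computing the full (unfiltered) induced edge maps on knot Floer homology, which is exactly the holomorphic-disc counting problem that remains open. So as written the proposal assumes the hard statement rather than proving it.

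The paper's proof avoids this by never comparing the two cubes edge-by-edge. It uses the basepoint filtration: $H_{*}(H_{*}(C_{F}(D),d_{0}^{f}),(d_{1}^{f})^{*})$ splits over cycles $Z$, the empty-cycle summand carries the genuine HOMFLY-PT edge maps $\Delta_{\pm}$ (these filtered edge maps \emph{are} computed, in \cite{Me}; see Section \ref{filterededgemaps} and Figure \ref{table2}) and therefore contributes exactly $H_{H}(D)$, while every non-empty admissible cycle contributes a reduced HOMFLY-PT homology $H_{H}(D-Z,n(Z))$, which is finite-dimensional over $\Z_{2}$ and hence $U$-torsion. Consequently the free rank of $E_{2}^{F}(D)$ over $\Z_{2}[U]$ equals that of $H_{H}(D)$, and the structure theorem gives $E_{2}^{F}(D)\cong H_{H}(D)\oplus T$, preserving $gr_{h}$ and $gr_{v}$ (along which $U$ acts trivially) but not $gr_{q}$. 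If you want to salvage your localization framework, replace the comparison with the HOMFLY-PT cube by localization applied to the basepoint-filtered spectral sequence: after inverting $U$ only the empty-cycle summand survives, and all higher basepoint-filtration differentials vanish for filtration reasons, which recovers the rank count without ever invoking Conjecture \ref{ManCon}(b).
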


\noindent
Thus, the $E_{2}$ page being torsion-free would prove that it is in fact HOMFLY-PT homology.

\begin{cor}

The reduced theory is given by 

\[ \overline{E_{2}^{F}}(D) \cong \overline{H}_{H}(D) \oplus T' \]

\noindent
where $\overline{H}_{H}(D)$ is the reduced HOMFLY-PT homology and $T'$ is the homology of the mapping cone 

\[T \xrightarrow{U} T \]

\noindent
In particular, there is a rank inequality 

\[ dim(\overline{E_{2}^{F}}(D)) \ge dim( \overline{H}_{H}(D)) \]

\end{cor}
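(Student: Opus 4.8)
The plan is to derive the corollary from Theorem 2 by applying the mapping cone functor for multiplication by $U$. First I would recall that the reduced theory $\overline{E_2^F}(D)$ is, by definition, obtained from the unreduced theory by taking the homology of the two-step complex
\[ E_2^F(D) \xrightarrow{U} E_2^F(D), \]
i.e. the mapping cone of multiplication by $U$ on $E_2^F(D)$ (with the appropriate grading shift). Since the mapping cone construction is additive over direct sums, the decomposition $E_2^F(D) \cong H_H(D) \oplus T$ from Theorem 2 splits this complex as a direct sum of the mapping cone of $U$ on $H_H(D)$ and the mapping cone of $U$ on $T$. Taking homology commutes with direct sums, so $\overline{E_2^F}(D)$ decomposes as the homology of $\bigl(H_H(D) \xrightarrow{U} H_H(D)\bigr)$ plus the homology of $\bigl(T \xrightarrow{U} T\bigr)$.

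Next I would identify the first summand with $\overline{H}_H(D)$. Since $H_H(D)$ is a free $\mathbb{F}[U]$-module, multiplication by $U$ on it is injective, so the homology of its mapping cone is concentrated in the cokernel $H_H(D)/U\cdot H_H(D)$, which is precisely the reduced HOMFLY-PT homology $\overline{H}_H(D)$ by definition. The second summand is by definition $T'$, the homology of $T \xrightarrow{U} T$. This yields $\overline{E_2^F}(D) \cong \overline{H}_H(D) \oplus T'$. A small point to check here is that the grading shift in the definition of the reduced theory matches up with the two gradings preserved by the isomorphism in Theorem 2, so that the decomposition is grading-compatible in the stated sense; this should be routine given that the Theorem 2 isomorphism preserves two of the three gradings and $U$ acts with a fixed grading shift.

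Finally, the rank inequality follows immediately by taking total dimensions over $\mathbb{F}$: since $\dim(\overline{E_2^F}(D)) = \dim(\overline{H}_H(D)) + \dim(T')$ and $\dim(T') \ge 0$, we get $\dim(\overline{E_2^F}(D)) \ge \dim(\overline{H}_H(D))$. One should note that finiteness of these dimensions is needed for the statement to be meaningful; this follows from $E_2^F(D)$ being a finitely generated $\mathbb{F}[U]$-module (a link invariant built from a finite cube of resolutions), so that $T$ is finite-dimensional and hence so is $T'$.

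I do not expect a serious obstacle here — the corollary is essentially a formal consequence of Theorem 2 together with the definition of the reduced theory as a mapping cone of $U$. The only mild subtlety is bookkeeping with the grading conventions: one must confirm that "reduced" really does mean the mapping cone of $U$ with the grading normalization used for $\overline{H}_H(D)$, and that Theorem 2's preservation of two gradings is enough to make the splitting respect those gradings. If instead the reduced theory is defined via a basepoint action or a quotient rather than literally a mapping cone, the argument would need a short lemma reconciling the two descriptions, but in the $\mathbb{F}[U]$-module setting these coincide.
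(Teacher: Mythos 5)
Your proposal is correct and is essentially the argument the paper intends: the corollary is stated there without a separate proof, as a formal consequence of the theorem together with the definition of the reduced theory, exactly as you argue (free part contributes $\overline{H}_{H}(D)=H_{H}(D)/U$, torsion part contributes $T'$, dimensions add). The one caveat, which you already flagged, is that the paper defines the reduced object as the $E_{2}$ page of a chain-level mapping cone $C_{F}(D)\xrightarrow{U_{i}}C_{F}(D)$ rather than the cone of $U$ on the $E_{2}$ page; since $U_{i}$ shifts the cube grading, the $E_{2}$ page of that cone is the homology of the cone of $U^{*}$ on the $E_{1}$-page complexes, and the resulting extension $0\to\operatorname{coker}(U)\to E_{2}(\overline{C}_{F}(D))\to\ker(U)\to 0$ splits over the field $\Z_{2}$, so the two descriptions agree as graded vector spaces and your argument goes through verbatim.
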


The paper will be organized as follows. Section 2 will give a background on the knot Floer cube of resolutions and some of the tools that will be used in the invariance proof. Section 3 will describe a general type of algebraic proof that can be used to prove invariance for theories with certain local properties, and Section 4 will show that the knot Floer cube of resolutions satisfies these properties. Section 5 proves the relationships with HOMFLY-PT homology.

\section{Background}

The knot Floer cube of resolutions was first defined with twisted coefficients by Ozsv\'{a}th and Szab\'{o} in \cite{Szabo}. It was later modified by Manolescu to give an untwisted version (\hspace{1sp}\cite{Manolescu}), which is the one we will be discussing here.

Let $D$ be a decorated braid diagram for a link $L$. The cube complex can be defined for any diagram with a distinguished edge, but like HOMFLY-PT homology, we will need our diagram to be a braid in order to obtain invariance. We will choose the convention of having our braids oriented from the bottom to the top, with the loops oriented clockwise. An example is given in Figure \ref{rht}.

\begin{figure}[h!]

 \centering
   \begin{overpic}[width=.35\textwidth]{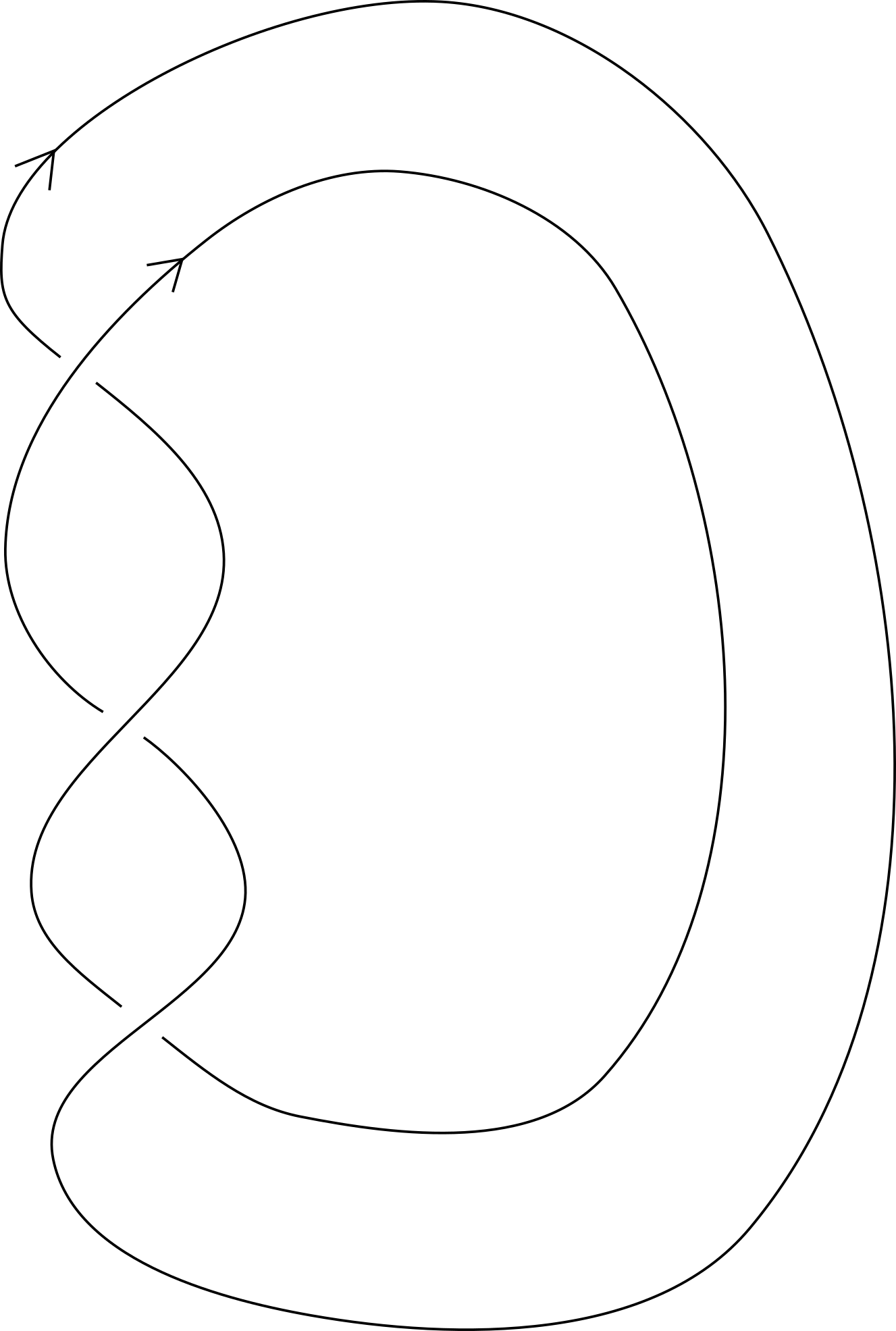}
   \put(-.9,80){$\bullet$}
   \end{overpic}
   \caption{A decorated braid diagram for the right-handed trefoil.}
   \label{rht}
\end{figure}

\subsection{The Knot Floer Cube of Resolutions}

We will assume that the reader is familiar with the construction of knot Floer homology - for resources, see \cite{OS1}, \cite{OS2}, \cite{Rasmussen2}. To simplify our computations, we will work with $\Z_{2}$ coefficients. Working without signs makes sense in this context because HOMFLY-PT homology, unlike $sl_{n}$ homology, gives an invariant for any choice of coefficients \cite{Krasner}. 

\begin{rem}
Technically, Krasner only shows invariance with $\Z$ coefficients. However, since the $E_{1}$ page is known to be torsion-free due to the MOY decomposition, we can apply the universal coefficient theorem to the $E_{2}$ page. This proves that the HOMFLY-PT homology over any ground ring is a well-defined invariant.

\end{rem}

The oriented cube of resolutions is defined using a multi-pointed Heegaard diagram on $S^{2}$ for a knot $K$. We will be using the minus theory $\mathit{HFK}^{-}(K)$, where we count all discs which pass through $O_{i}$ with coefficient $U_{i}$, and we do not count any discs passing through an $X$ basepoint. The ground ring $R$ is therefore given by $\Z_{2}[U_{1}, ..., U_{n}]$. Given a decorated braid projection $D$, we assign the Heegaard diagram in Figure \ref{fig2a} to each positive crossing, and the diagram in Figure \ref{fig2b} to each negative crossing. 

Knot Floer homology is typically defined using just the information in a Heegaard diagram. Unfortunately, there is no known diagram for which the associated chain complex splits as an oriented cube of resolutions. However, using the diagrams in Figure \ref{crossings}, we will construct a larger complex $C_{F}(D)$ which is chain homotopy equivalent to $\mathit{CFK}^{-}(D)$ such that $C_{F}(D)$ does split into a cube of resolutions.

\begin{figure}[h!]
\tiny
\begin{subfigure}{.5\textwidth}
 \centering
   \begin{overpic}[width=.9\textwidth]{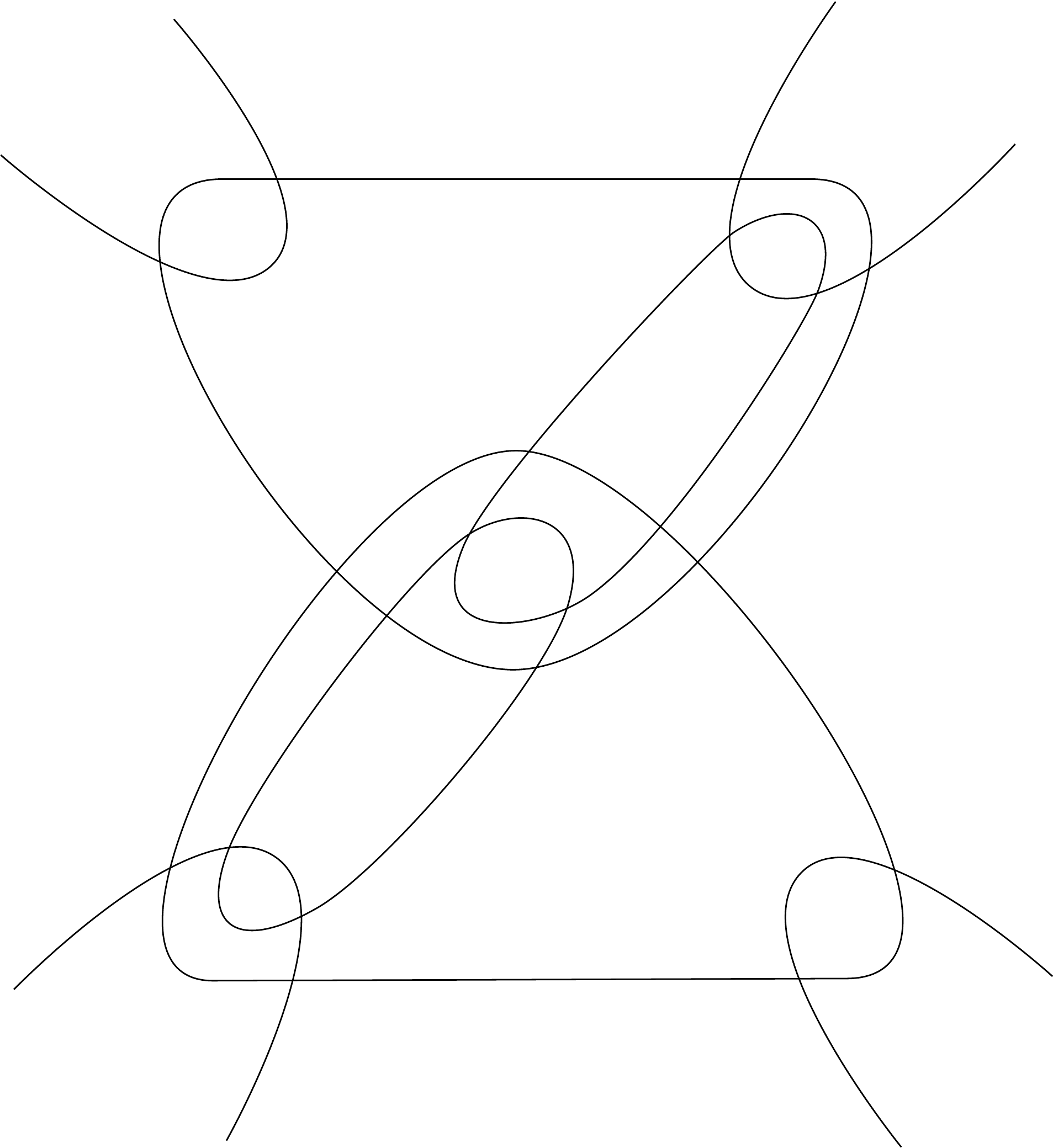}
   \put(17.6,79){$O_{1}$}
   \put(66,77){$O_{2}$}
   \put(43,49.7){$X$}
   \put(53,47.2){$X$}
   \put(21,21.6){$O_{3}$}
   \put(71,19){$O_{4}$}
   \put(43,86){$\alpha_{1}$}
   \put(50,70){$\alpha_{2}$}
   \put(45,12){$\beta_{1}$}
   \put(40,30.4){$\beta_{2}$}
   \end{overpic}
  \caption{Positive Crossing}  \label{fig2a}
\end{subfigure}%
\begin{subfigure}{.5\textwidth}
  \centering
   \begin{overpic}[width=.9\textwidth]{initial_diagram.pdf}
   \put(17.6,79){$O_{1}$}
   \put(66,77){$O_{2}$}
   \put(43,49.7){$X$}
   \put(34,52){$X$}
   \put(21,21.6){$O_{3}$}
   \put(71,19){$O_{4}$}
   \put(43,86){$\alpha_{1}$}
   \put(50,70){$\alpha_{2}$}
   \put(45,12){$\beta_{1}$}
   \put(40,30.4){$\beta_{2}$}
   \end{overpic}
  \hspace{7mm} \caption{Negative Crossing}
  \label{fig2b}
\end{subfigure}
\caption{The Heegaard diagrams at positive and negative crossings}\label{HDCrossing}
\label{crossings}
\end{figure}

Note that these two diagrams use the same $\alpha$ and $\beta$ curves, but differ in the location of one of the $X$ basepoints. The diagram in Figure 3 is thus able to include the data from both the positive and the negative crossings, as well as the oriented smoothing. More specifically, taking the $A_{0}$ and $A^{+}$ labels to denote $X$ basepoints gives a positive crossing, taking $A_{0}$ and $A^{-}$ to denote $X$ basepoints gives a negative crossing, and taking the two $B$ labels to be $X$ basepoints gives the oriented smoothing. Given a link diagram $D$ with a choice of crossing $c$, let $D_{+}$ denote the diagram where that crossing is a positive crossing, $D_{-}$ the diagram where it is a negative crossing, and $D_{s}$ the oriented resolution.

\begin{figure}[h!]
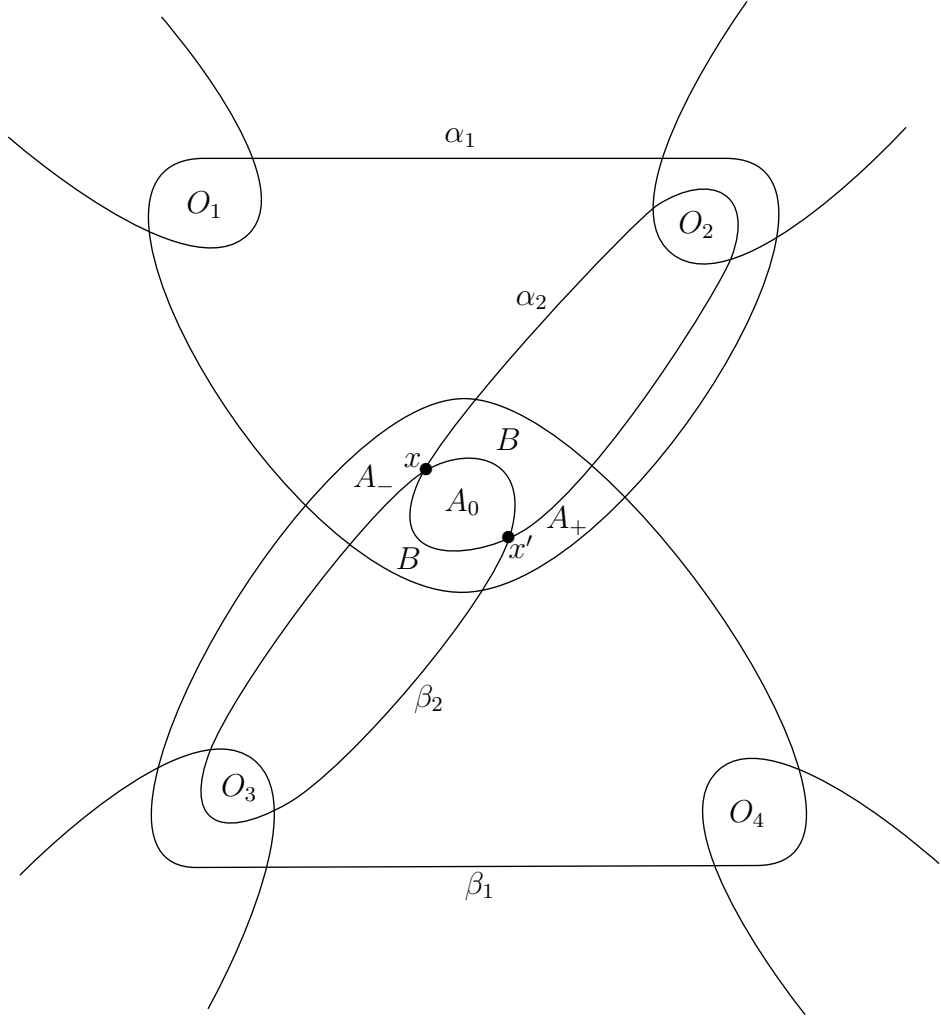
 

 \centering
   \begin{overpic}[width=.75\textwidth]{initial_diagram.pdf}
   \put(39,54){$x$}
   \put(40.4,53){$\bullet$}
   \put(49.3,45){$x'$}
   \put(48.5,46.3){$\bullet$}
   \put(17.6,79){$O_{1}$}
   \put(66,77){$O_{2}$}
   \put(48,55.7){$B$}
   \put(43,49.7){$A_{0}$}
   \put(38.2,44){$B$}
   \put(34,52){$A_{-}$}
   \put(53,48){$A_{+}$}
   \put(21,21.6){$O_{3}$}
   \put(71,19){$O_{4}$}
   \put(43,86){$\alpha_{1}$}
   \put(50,70){$\alpha_{2}$}
   \put(45,12){$\beta_{1}$}
   \put(40,30.4){$\beta_{2}$}
   \end{overpic}
\caption{The Diagram at a Crossing}\label{HDCrossing}
\end{figure}

We will start by defining the cube of resolutions for a negative crossing. Since there are $X$ basepoints at $A_{0}$ and $A^{-}$, the intersection point $x$ becomes special, as it induces a filtration on the complex. Let $X$ denote the component of the knot Floer complex with generators containing the $x$ intersection point, and let $Y$ denote the component of the complex with generators which do not contain $x$. If we let $\Phi_{B}$ denote those differentials with multiplicity $0$ at $A_{0}$ and $A^{-}$ and multiplicity $1$ at one of the $B$'s, then 

\[ \mathit{CFK}^{-}(D_{-}) \cong Y \xrightarrow{\hspace{3mm}\Phi_{B}\hspace{3mm}} X \]

We then define the cube complex $C_{F}(D_{-})$ to be the complex in Figure \ref{negcomplex}, where $\Phi_{A^{-}}$ counts discs with multiplicity $0$ at the $B$ basepoints and $1$ at $A_{0}$ or $A^{-}$ and $\Phi_{A^{-}B}$ counts discs with multiplicity $1$ at one of the $B$ basepoints and multiplicity $1$ at $A_{0}$ or $A^{-}$ (and multiplicity $0$ at the other in both cases). The proof that $d^{2}=0$ is comes from counting Maslov index $2$ degenerations (see \cite{Szabo}, p. 33).

\begin{figure}[!h]
\centering
\begin{tikzpicture}
  \matrix (m) [matrix of math nodes,row sep=5em,column sep=6em,minimum width=2em] {
     X & X \\
     Y & X \\};
  \path[-stealth]
    (m-1-1) edge node [left] {$\Phi_{A^{-}}$} (m-2-1)
            edge node [above] {$1$} (m-1-2)
            edge node [right]{$\Phi_{A^{-}B}$} (m-2-2)
    (m-2-1.east|-m-2-2) edge node [above] {$\Phi_{B}$} (m-2-2)
    (m-1-2) edge node [right] {$U_{1}+U_{2}+U_{3}+U_{4}$ \hspace{ 15mm}} (m-2-2);
\end{tikzpicture}
\caption{Complex for the Negative Crossing} \label{negcomplex}
\end{figure}

Note that $C_{F}(D_{-})$ admits two filtrations - a horizontal filtration and a vertical filtration (the filtrations induced by the horizontal and vertical coordinates, respectively). Using the vertical filtration, we see that 

\[C_{F}(D_{-}) \cong \mathit{CFK}^{-}(D_{-}) \]

\noindent
where the chain homotopy equivalence comes from contracting the isomorphism

\[ X \xrightarrow{\hspace{3mm}1\hspace{3mm}} X \]

The interesting filtration on this complex is the horizontal filtration - this is the one that gives us the cube of resolutions. The complex in the lower filtration level is given by 

\[ Y \xrightarrow{\hspace{3mm}\Phi_{A^{-}}\hspace{3mm}} X \]

\noindent
We define $C_{F}(D_{s})$ to be this complex. Note that this is exactly the complex obtained by placing $X$ basepoints at the $B$ markings, so it gives the knot Floer homology of the oriented smoothing, $\mathit{CFK}^{-}(D_{s})$. 

The complex in the higher filtration level is given by 

\[ X \xrightarrow{\hspace{1mm}U_{1}+U_{2}+U_{3}+U_{4}\hspace{1mm}} X \]

\noindent
Observe that the chain complex $X$ is the complex obtained by deleting $\alpha_{2}$ and $\beta_{2}$ from the Heegaard diagram, giving the diagram in Figure \ref{singdiagram}. This is the Heegaard diagram used to describe the singularization $D_{x}$ (\hspace{1sp}\cite{SingularKnots}). Since the linear term $U_{1}+U_{2}+U_{3}+U_{4}$ depends only on the four edges adjacent to the crossing and not the sign of the crossing, we define 

\[ C_{F}(D_{x}) = X \xrightarrow{\hspace{1mm}U_{1}+U_{2}+U_{3}+U_{4}\hspace{1mm}} X \]

This gives the cube of resolutions decomposition 

\[ C_{F}(D_{-}) = C_{F}(D_{s}) \xrightarrow{\hspace{10mm}} C_{F}(D_{x}) \]

\begin{figure}[!h]
\centering
   \begin{overpic}[width=.7\textwidth]{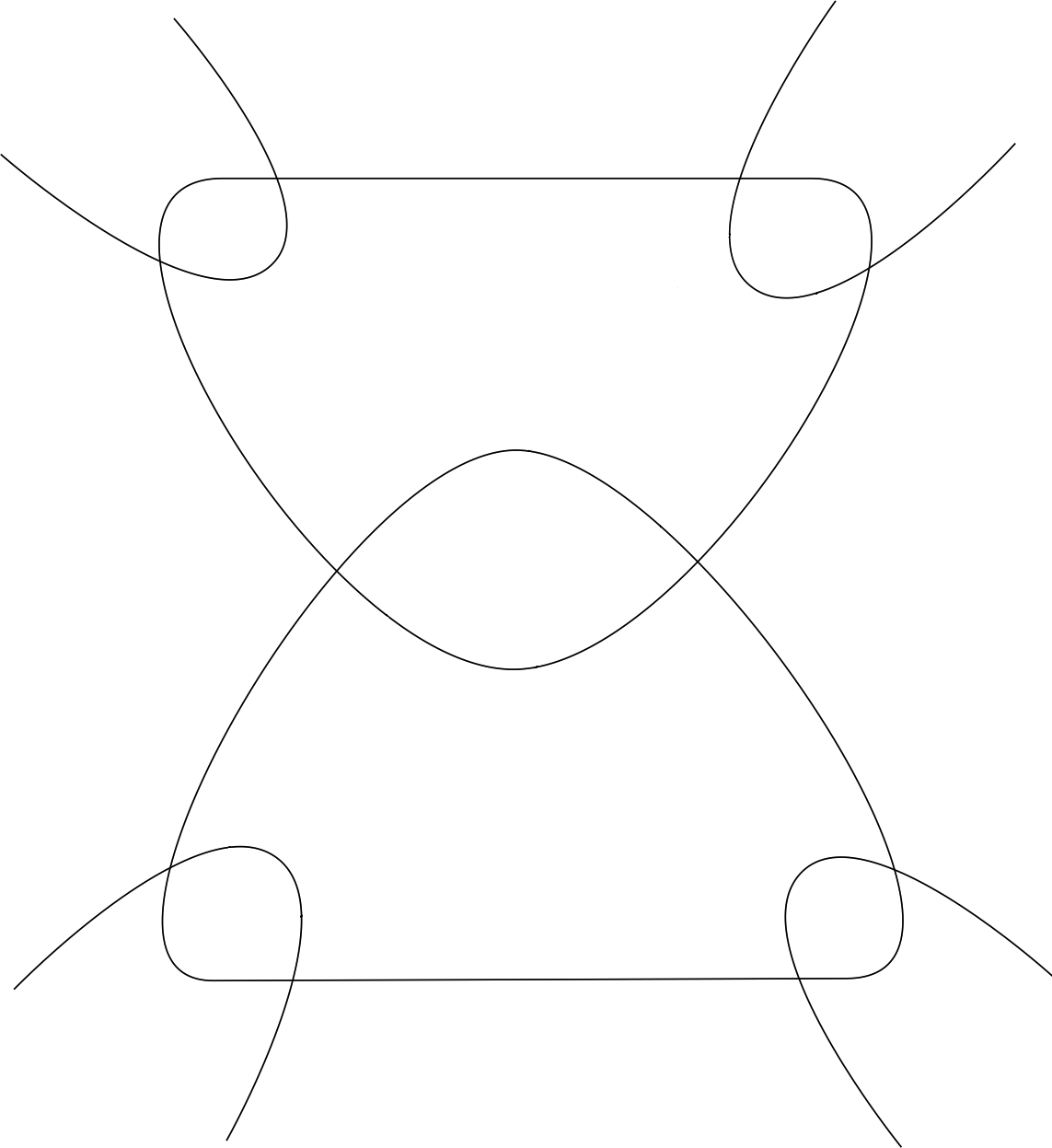} 
   \put(17.6,79){$O_{1}$}
   \put(68,79){$O_{2}$}
   \put(19,19.5){$O_{3}$}
   \put(72,19){$O_{4}$}
   \put(41.5,50){$XX$}
   \put(43,86){$\alpha$}
   \put(1,17){$\alpha$}
   \put(89,18){$\alpha$}
   \put(1,81){$\beta$}
   \put(87,83){$\beta$}
   \put(45,11.5){$\beta$}
   \put(13.2,76.4){$\bullet$}
   \put(23.25,83.7){$\bullet$}
   \put(24,85.8){$a_{2}$}
   \put(10.5,75.2){$a_{1}$}
   \put(63.8,83.7){$\bullet$}
   \put(75,76){$\bullet$}
   \put(61,85.5){$b_{1}$}
   \put(76.5,74.5){$b_{2}$}
   \put(28.55,49.55){$\bullet$}
   \put(25,49.55){$c_{1}$}
   \put(60,50.3){$\bullet$}
   \put(62,50.3){$c_{2}$}
   \put(14.1,23.5){$\bullet$}
   \put(11.35,25.5){$d_{1}$}
   \put(24.6,13.8){$\bullet$}
   \put(25.9,11.34){$d_{2}$}
   \put(68.7,14){$\bullet$}
   \put(66.6,12.3){$e_{1}$}
   \put(77.15,23.3){$\bullet$}
   \put(78.5,25){$e_{2}$}
   \end{overpic}
   \caption{Heegaard diagram for a singularization}\label{singdiagram}
\end{figure}

The positive crossing has a similar story, except that the focus is on the intersection point $x'$. Let $X'$ denote the component of the complex with generators containing the $x'$ intersection point, and let $Y'$ denote the the component of the complex with generators that do not contain $x'$. Then we have 

\[ \mathit{CFK}^{-}(D_{+}) \cong X' \xrightarrow{\hspace{3mm}\Phi_{B'}\hspace{3mm}} Y' \]

\noindent
where $\Phi_{B'}$ counts discs with multiplicity $1$ at one of the $B$ markings and $0$ at $A_{0}$ and $A^{+}$. We can now define the cube complex for the positive crossing, $C_{F}(D_{+})$, to be the complex in Figure \ref{poscomplex}. The maps are defined analogously to the positive crossing diagram, where $\Phi_{A^{+}}$ counts discs with multiplicity $0$ at the $B$ basepoints and $1$ at $A_{0}$ or $A^{+}$ and $\Phi_{A^{+}B}$ counts discs with multiplicity $1$ at one of the $B$ basepoints and multiplicity $1$ at $A_{0}$ or $A^{+}$ (and multiplicity $0$ at the other in both cases).

\begin{figure}[!h]
\centering
\begin{tikzpicture}
  \matrix (m) [matrix of math nodes,row sep=5em,column sep=6em,minimum width=2em] {
     X' & Y' \\
     X' & X' \\};
  \path[-stealth]
    (m-1-1) edge node [left] {$U_{1}+U_{2}+U_{3}+U_{4}$} (m-2-1)
            edge node [above] {$\Phi_{B'}$} (m-1-2)
            edge node [right]{$\Phi_{A^{+}B}$} (m-2-2)
    (m-2-1.east|-m-2-2) edge node [above] {1} (m-2-2)
    (m-1-2) edge node [right] {$\Phi_{A^{+}}$ \hspace{ 15mm}} (m-2-2);
\end{tikzpicture}
\caption{Complex for the Positive Crossing} \label{poscomplex}
\end{figure}

Looking at the vertical filtration, we see that 

\[ C_{F}(D_{+}) \cong \mathit{CFK}^{-}(D_{+}) \]

\noindent
The horizontal filtration gives us the cube of resolutions. The complex in the higher filtration level is the knot Floer complex of the oriented smoothing

\[ X' \xrightarrow{\hspace{3mm}\Phi_{A^{+}}\hspace{3mm}} Y' \]

\noindent
which is isomorphic to $C_{F}(D_{s})$. The complex in the higher lower level is 

\[ X' \xrightarrow{\hspace{1mm}U_{1}+U_{2}+U_{3}+U_{4}\hspace{1mm}} X' \]

\noindent
where $X'$, like $X$ in the positive crossing case, is the complex coming from the diagram in Figure \ref{singdiagram}. Thus, the complex in the higher filtration level is precisely $C_{F}(D_{x})$. We therefore have the decomposition

\[ C_{F}(D_{+}) = C_{F}(D_{x}) \xrightarrow{\hspace{10mm}} C_{F}(D_{s}) \]

For a diagram $D$, we define the cube of resolutions complex $C_{F}(D)$ to be the complex obtained by modifying the chain complex as defined above for all of the positive and negative crossings in the diagram. For a positive crossing, we call the singularization the 0-resolution of the crossing, and the smoothing the 1-resolution of the crossing. For a negative crossing, the smoothing is the 0-resolution and the singularization is the 1-resolution.

\begin{figure}[!h]
\centering
   \begin{overpic}[width=.5\textwidth]{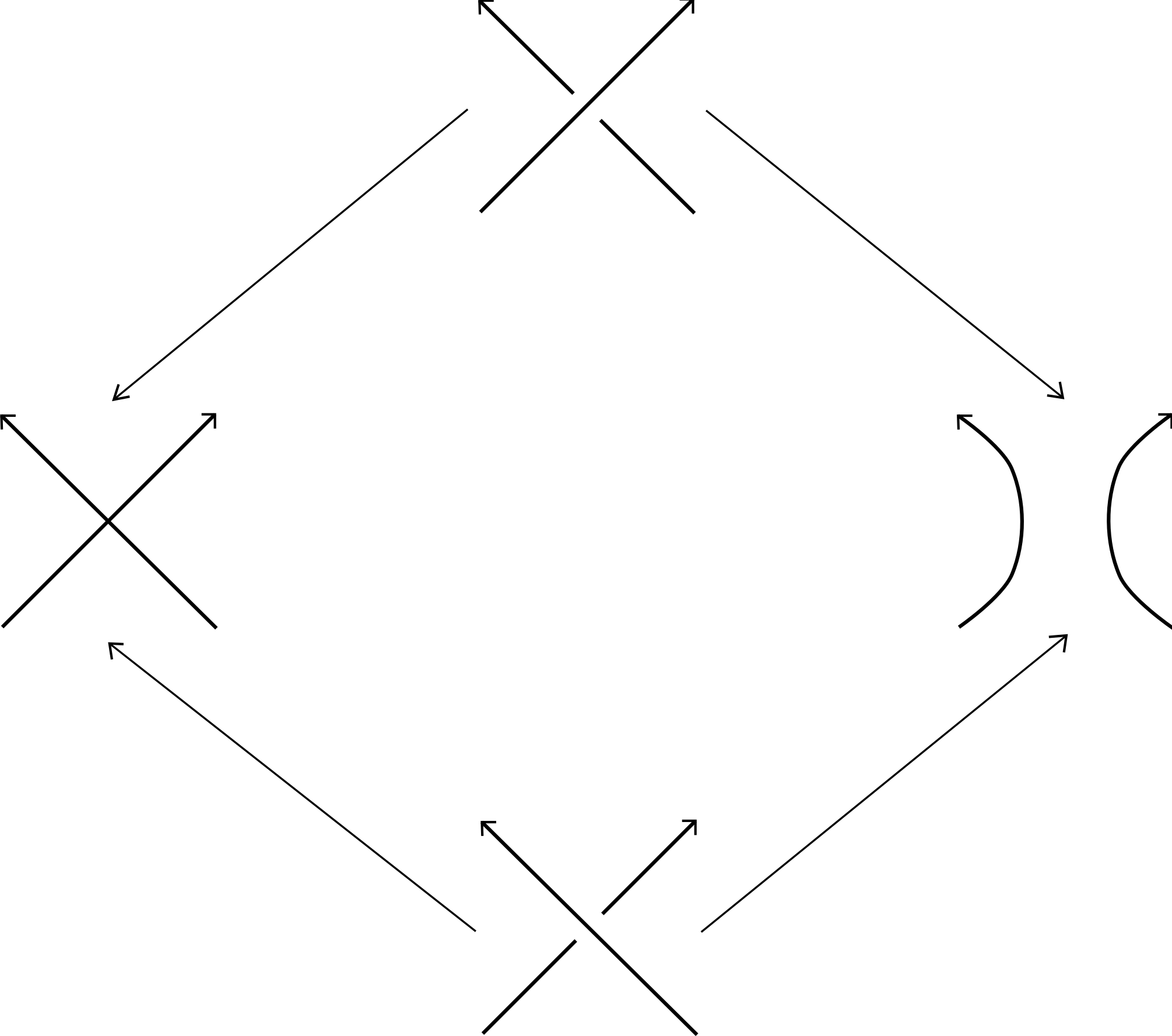} 
   \put(0,17){$1$-resolution}
   \put(75,17){$0$-resolution}
   \put(0,70){$0$-resolution}
   \put(74,70){$1$-resolution}
   \end{overpic}
   \caption{0- and 1-resolutions of positive and negative crossings}\label{resolutions}
\end{figure}

We turn the cube filtration into a grading by using the height in the cube of resolutions. In particular, let $c_{1},...c_{k}$ be the crossings in $D$. A complete resolution is a singular diagram $S_{i_{1}...i_{k}}$ for $i_{j} \in \{0,1\}$, where the $j$-th crossing in $D$ has received the $i_{j}$-resolution. This vertex in the cube of resolutions has cube grading 

\[ gr_{cube}(S_{i_{1}...i_{k}}) = \sum_{j=1}^{k} i_{j} \]

\noindent
Using this grading, the differential can be decomposed as 

\[ d = d_{0}+d_{1}+...+d_{k} \]

\noindent
where $d_{j}$ increases the cube grading by $j$.

\subsection{The Three Gradings} The knot Floer complex comes equipped with a bigrading - the Maslov grading, and the Alexander grading. We will refer to these as $M$ and $A$, respectively. With respect to $(M,A)$, the differential has bigrading $(-1,0)$ and multiplication by $U_{i}$ changes the bigrading by $(-2,-1)$. This bigrading extends to the cube of resolutions complex, but the cube complex also has the cube grading, making it triply graded. 

HOMFLY-PT homology is also triply graded, with gradings $gr_{q}$, $gr_{h}$, and $gr_{v}$ (using the conventions of \cite{Rasmussen}). These gradings are called the quantum grading, the horizontal grading, and the vertical grading, respectively. With respect to this triple grading, the vertex maps $d_{0}$ are homogeneous of degree $(2,2,0)$, the edge maps $d_{1}$ are homogeneous of degree $(0,0,2)$, and $U_{i}$ multiplication is homogeneous of degree $(2,0,0)$.

We can define analogs for these three gradings on $C_{F}(D)$ (which will also be denoted by $gr_{q}$, $gr_{h}$, and $gr_{v}$) as follows:

\[gr_{v} = 2 gr_{cube} -c(D) -b(D)  \]
\[ gr_{q} = -2M+2A - gr_{v}\] \[gr_{h} = -2M+4A -gr_{v} \] 

\noindent
where $c(D)$ is the number of crossings in $D$ and $b(D)$ is the number of strands in the braid $D$. With respect to this triple grading, the vertex maps, edge maps, and $U_{i}$ multiplication behave in the same way as in the HOMFLY-PT complex.

\subsection{The Decorated Edge and Insertions}

If we view our diagram as a 4-valent graph with additional information at the crossings, we see that the $O_{i}$ basepoints are in bijection with edges of the graph. It is sometimes convenient to perform (0,3) stabilization at such an $O_{i}$, as in Figure \ref{HDMarkedEdge}. To maintain the bijection between the $O_{i}$ basepoints and edges in the diagram $D$, we add in a bivalent vertex in the diagram $D$ whenever perform one of these stabilizations. Stabilizations of this type are called \emph{insertions}.

\begin{figure}[h!]

 \centering
   \begin{overpic}[width=.6\textwidth]{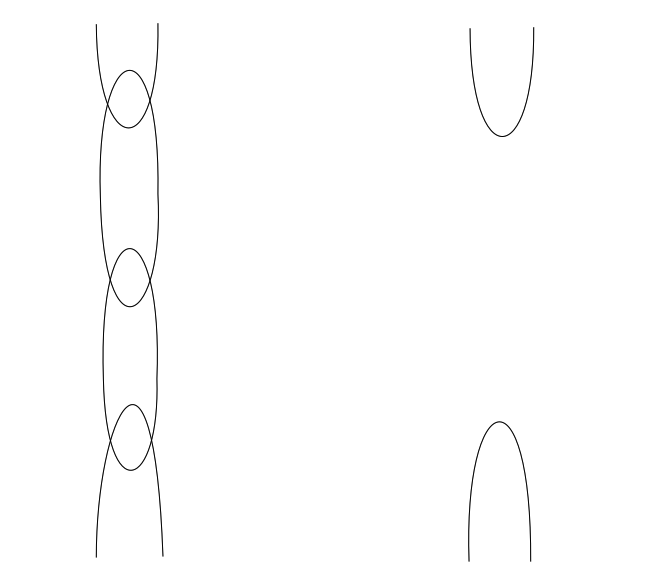}\small
   \put(17.8,72.5){$O_{1}$}
   \put(75,71){$O_{1}$}
   \put(18.1,44.7){$X$}
   \put(75,44.7){$X$}
   \put(17.9,20){$O_{2}$}
   \put(74.6,18.5){$O_{2}$}
   \end{overpic}
\caption{The diagram at a typical insertion (left) and the marked insertion (right)}\label{HDMarkedEdge}
\end{figure}

One aspect of the Heegaard diagram that we have neglected to mention thus far is its dependence on the marking in the decorated braid diagram. If we were to draw our diagram ignoring this data, we would end up with one too many $\alpha$ and $\beta$ circles for a balanced Heegaard diagram. Therefore, our marked edge on the diagram $D$ specifies an edge at which we add an insertion, but leave out the $\alpha$ and $\beta$ circles, giving a well-defined Heegaard diagram. 

The dependence on this marked edge is one of the more confusing aspects of the theory. While its location does not impact the chain homotopy type of the complex, the chain homotopy equivalence maps are difficult to compute even when the complexes are quite simple.

In \cite{Manolescu} and \cite{Szabo}, the decorated edge was always assumed to be leftmost in the braid. When we are using this convention, we will write the complex as $C_{F}(D)$. However, we can also further simplify things by adding an extra unknotted component to the left of the braid and placing the decorated edge on this component. This is equivalent to just placing an $X$ and $O_{i}$ basepoint to the left of the diagram. This construction removes any dependence on the marked edge from our computations. We define $\widetilde{C}_{F}(D)$ to be the complex obtained in this way with the new $U_{i}$ variable set to $0$. 

In analogy with the three versions of HOMFLY-PT homology, we will call $\widetilde{C}_{F}(D)$ the \emph{unreduced} complex and $C_{F}(D)$ the \emph{middle} complex. We define the \emph{reduced} complex $\overline{C}_{F}(D)$ by setting one of the $U_{i}$ variables in $C_{F}(D)$ equal to zero.

\subsection{Generators and Cycles}

We define a subset of edges $Z$ in our diagram $D$ to be a \emph{cycle} if they form a homological cycle in the underlying oriented graph for $D$. In other words, at each vertex in $D$ (both bivalent and 4-valent) the number of incoming edges in $Z$ and the number of outgoing edges in $Z$ are the same.

There is a way of assigning a cycle $Z$ to each generator in our complex $\mathit{CFK}^{-}(D)$. Whenever a generator contains an intersection point on one of the $\alpha$ or $\beta$ arcs in Figure \ref{labeled diagram} ($\alpha_{3}, \alpha_{4}, \beta_{3}$, or $\beta_{4}$), we assign the edge corresponding to the $O_{i}$ contained in that arc. Defining $Z_{i_{1}...i_{k}}$ to be the local cycle in a tangle $T$ containing edges $e_{i_{1}},...e_{i_{k}}$, a full description of which generators correspond to which local cycles in Figure \ref{table}. This association of cycles to generators gives a very powerful framework in which to make computations, as we will see in later sections.

\begin{figure}[h!]
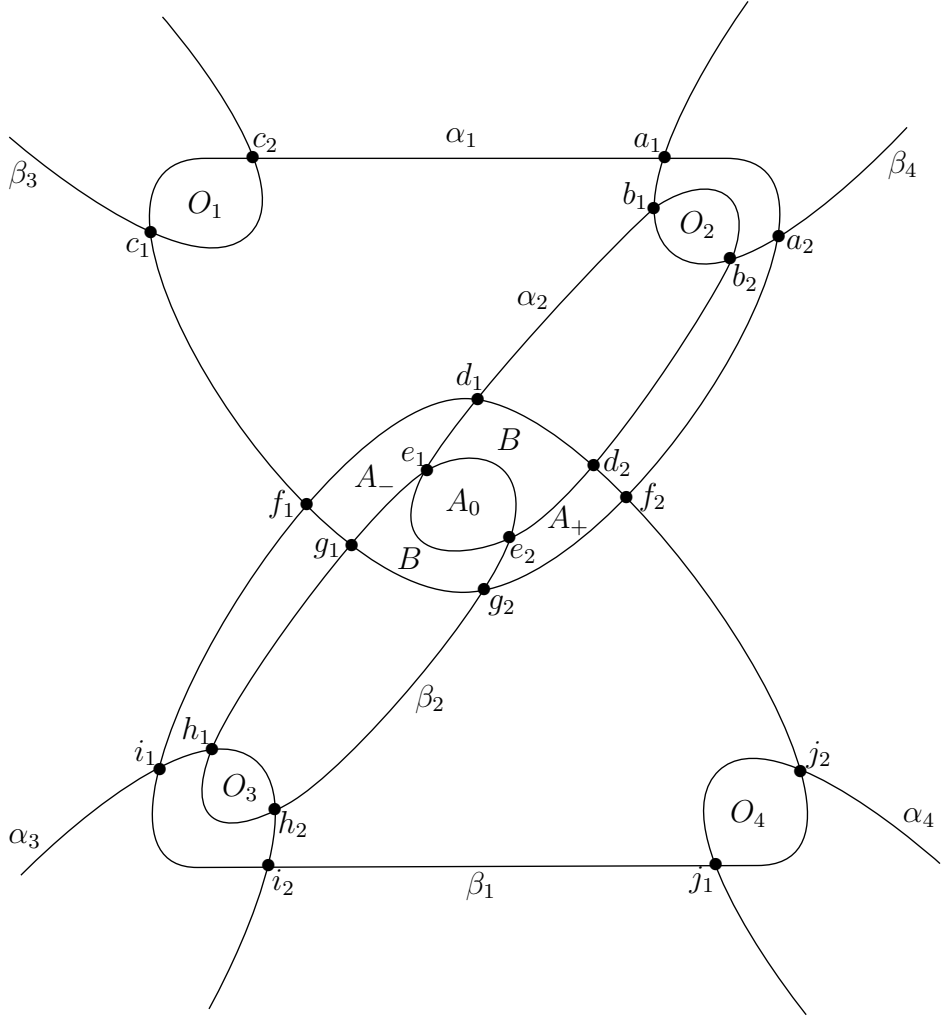


 \centering
   \begin{overpic}[width=.75\textwidth]{initial_diagram.pdf}
   \put(0, 82){$\beta_{3}$}
   \put(86.5,83){$\beta_{4}$}
   \put(0, 17){$\alpha_{3}$}
   \put(88,19){$\alpha_{4}$}
   \put(38.6,54.6){$e_{1}$}
   \put(40.4,52.9){$\bullet$}
   \put(49.3,45){$e_{2}$}
   \put(48.5,46.3){$\bullet$}
   \put(17.6,79){$O_{1}$}
   \put(66,77){$O_{2}$}
   \put(48,55.7){$B$}
   \put(43,49.7){$A_{0}$}
   \put(38.2,44){$B$}
   \put(34,52){$A_{-}$}
   \put(53,48){$A_{+}$}
   \put(21,21.6){$O_{3}$}
   \put(71,19){$O_{4}$}
   \put(43,86){$\alpha_{1}$}
   \put(50,70){$\alpha_{2}$}
   \put(45,12){$\beta_{1}$}
   \put(40,30.4){$\beta_{2}$}
   \put(13.2,76.4){$\bullet$}
   \put(23.25,83.7){$\bullet$}
   \put(24,85.5){$c_{2}$}
   \put(11.5,75.2){$c_{1}$}
   \put(63.8,83.7){$\bullet$}
   \put(75,76){$\bullet$}
   \put(61.5,85.5){$a_{1}$}
   \put(76.5,75.5){$a_{2}$}
   \put(60.4,79.9){$b_{1}$}
   \put(71.2,71.8){$b_{2}$}
   \put(62.7,78.7){$\bullet$}
   \put(70.2,73.8){$\bullet$}
   \put(44,62.1){$d_{1}$}
   \put(58.5,53.6){$d_{2}$}
   \put(45.4,59.95){$\bullet$}
   \put(56.8,53.45){$\bullet$}
   \put(28.55,49.55){$\bullet$}
   \put(25.5,49.55){$f_{1}$}
   \put(60,50.3){$\bullet$}
   \put(62,50.3){$f_{2}$}
   \put(33,45.5){$\bullet$}
   \put(30,45.65){$g_{1}$}
   \put(46,41.2){$\bullet$}
   \put(47.2,40){$g_{2}$}
   \put(19.25,25.4){$\bullet$}
   \put(17,27.2){$h_{1}$}
   \put(25.4,19.55){$\bullet$}
   \put(26.5,18){$h_{2}$}
   \put(14.1,23.5){$\bullet$}
   \put(12.35,25.1){$i_{1}$}
   \put(24.8,14){$\bullet$}
   \put(25.9,12.34){$i_{2}$}
   \put(68.8,14.1){$\bullet$}
   \put(67,12.6){$j_{1}$}
   \put(77.15,23.3){$\bullet$}
   \put(78.5,25){$j_{2}$}
   \end{overpic}
   \caption{The Labeled Diagram}
   \label{labeled diagram}
\end{figure}

\begin{figure}
\begin{center}
  \begin{tabular}{ l | c }

    Cycles & Generators  \\ \hline
    $Z_{\phi}$ & $(d,g) \text{ and } (e,f)$ \\ \hline
    $Z_{13}$ & $(c,d,h) \text{ and } (c,e,i)$ \\ \hline
    $Z_{24}$ & $(a,e,j)\text{ and } (b,g,j)$  \\ \hline
    $Z_{23}$ & $(a,e,i) \text{, } (a,d,h)\text{, } (b,f,h) \text{, and } (b,g,i)$  \\ \hline
    $Z_{14}$ & $(c,e,j)$  \\ \hline
    $Z_{1234}$ & $(b,c,h,j)$  \\ 
    \hline
  \end{tabular}
\end{center}
\caption{Generators corresponding to each local cycle} \label{table}
\end{figure}

\subsection{The Basepoint Filtration on $\widetilde{C}_{F}(D)$}

Our choice of Heegaard diagram allows us to place a basepoint filtration on the complex $\mathit{CFK}^{-}(D)$. We add basepoints $p_{i}$ to the Heegaard diagram as in Figure \ref{markings} - note that these basepoints are in bijection with regions in the complement of $D$ in $S^{2}$.

\begin{figure}[h!]
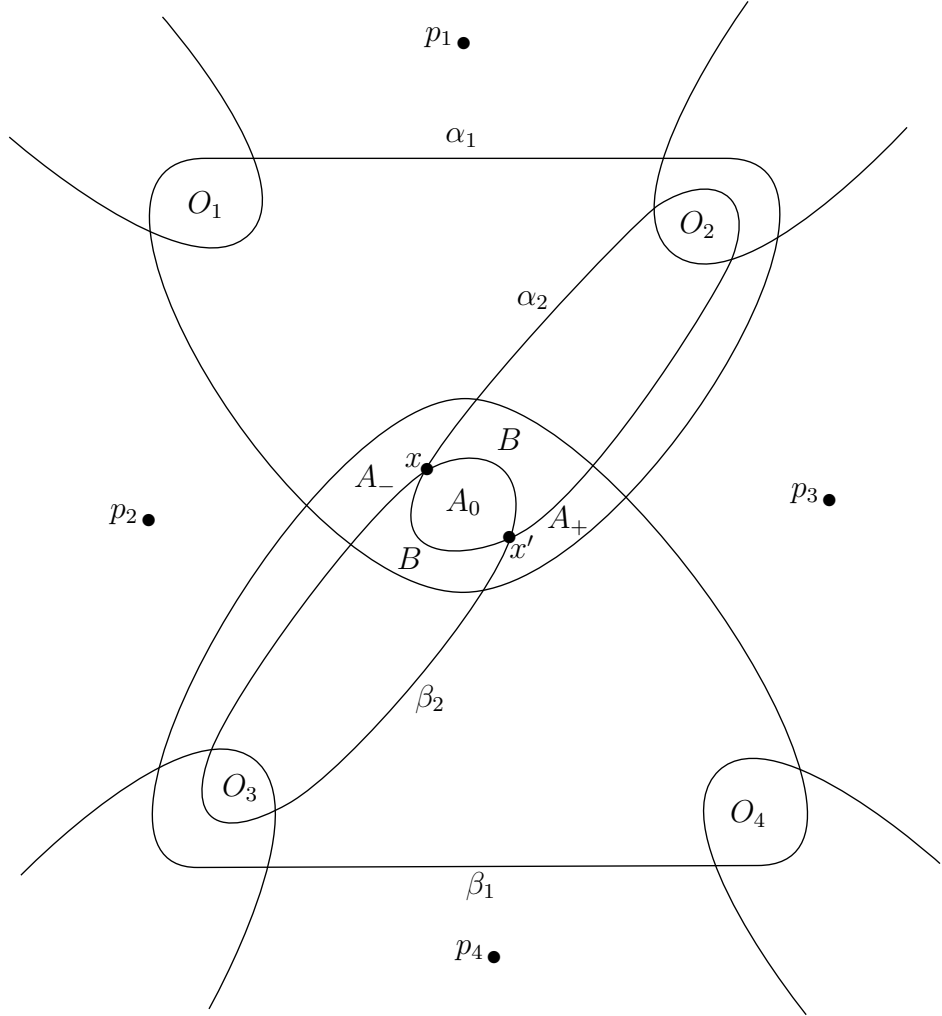

 \centering
   \begin{overpic}[width=.75\textwidth]{initial_diagram.pdf}
   \put(39,54){$x$}
   \put(40.4,53){$\bullet$}
   \put(49.3,45){$x'$}
   \put(48.5,46.3){$\bullet$}
   \put(17.6,79){$O_{1}$}
   \put(66,77){$O_{2}$}
   \put(48,55.7){$B$}
   \put(43,49.7){$A_{0}$}
   \put(38.2,44){$B$}
   \put(34,52){$A_{-}$}
   \put(53,48){$A_{+}$}
   \put(21,21.6){$O_{3}$}
   \put(71,19){$O_{4}$}
   \put(43,86){$\alpha_{1}$}
   \put(50,70){$\alpha_{2}$}
   \put(45,12){$\beta_{1}$}
   \put(40,30.4){$\beta_{2}$}
   \put(13,48){$\bullet$}
   \put(80,50){$\bullet$}
   \put(47,5){$\bullet$}
   \put(44,95){$\bullet$}
   \put(10,49){$p_{2}$}
   \put(77,51){$p_{3}$}
   \put(44,6){$p_{4}$}   
   \put(41,96){$p_{1}$}
   \end{overpic}
   \caption{Additional basepoints} \label{markings}
\end{figure}

We will list a few lemmas regarding these basepoints, proved in \cite{Me}, Section 3.

\begin{lem}

These markings define a filtration on the complex $CFK^{-}(D)$, where the change in filtration level of a differential is given by the sum of the multiplicities of the corresponding holomorphic disc at these basepoints. This filtration does not depend on the location of the X's in the interior regions, so it extends to a filtration on $\widetilde{C}_{F}(D)$.

\end{lem}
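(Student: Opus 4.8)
\emph{Plan.} The idea is to build the filtration by hand as a relative $\Z$-grading and then check the two properties a filtration must have. For generators $x,y$ with $\pi_{2}(x,y)\neq\emptyset$, set $\mathcal{F}(x)-\mathcal{F}(y):=\sum_{i}n_{p_{i}}(\phi)$ for any $\phi\in\pi_{2}(x,y)$; since $n_{p_{i}}$ is additive under juxtaposition of domains this is consistent across chains of generators once it is independent of the chosen $\phi$, and after fixing an additive constant it defines a function $\mathcal{F}$ on the generating set of $\mathit{CFK}^{-}(D)$. We then extend $\mathcal{F}$ to the whole module by declaring multiplication by each $U_{i}$ to preserve $\mathcal{F}$, which is legitimate since the $U_{i}$-action does not interact with the regions carrying the $p_{i}$. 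It remains to show (i) the relative grading is well defined, and (ii) the differential does not raise $\mathcal{F}$.

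Step (ii) is the easy direction, and I would do it first. Any homotopy class $\phi$ contributing a nonzero term to $\partial$ admits a holomorphic representative and has $n_{X_{j}}(\phi)=0$ at every $X$-basepoint; by positivity of domains, $D(\phi)$ is nonnegative region by region, so in particular $n_{p_{i}}(\phi)\geq 0$ for all $i$ and hence $\mathcal{F}(x)-\mathcal{F}(y)=\sum_{i}n_{p_{i}}(\phi)\geq 0$. Together with the $U_{i}$-invariance of $\mathcal{F}$, this says exactly that $\partial$ preserves each filtration level and that the drop in filtration along a given differential term equals $\sum_{i}n_{p_{i}}(\phi)$, as asserted.

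Step (i) is the heart of the matter. Two classes $\phi,\phi'\in\pi_{2}(x,y)$ that both occur in $\partial$ differ by a periodic domain $P$ with $n_{X_{j}}(P)=0$ for every $X$-basepoint, so it suffices to prove $\sum_{i}n_{p_{i}}(P)=0$ for every such periodic domain. I would attack this using the explicit structure of the diagram: the Heegaard surface is assembled from the finitely many standard local pieces of Figures~\ref{crossings} and \ref{singdiagram}, glued only along the edge regions that carry the $O_{i}$. Restricted to a piece, $P$ becomes a relative periodic domain whose boundary lies on the gluing circles; one classifies these relative periodic domains piece by piece (equivalently, one uses the correspondence between periodic domains of vanishing $X$-multiplicity and the homological cycles $Z$ in the oriented graph $D$ recorded in Figure~\ref{table}), checks for each model piece that the local sum of $n_{p_{i}}$ over the $p_{i}$ in that piece cancels against the multiplicities along its gluing arcs, and then sums over all pieces so that the gluing-arc contributions telescope to zero. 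This bookkeeping is the one genuinely non-formal step, and it is where the precise placement of the $p_{i}$ relative to the $\alpha$- and $\beta$-arcs of Figure~\ref{labeled diagram} is used; it is the step I expect to be the main obstacle.

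Finally, independence of the interior $X$'s and extension to $\widetilde{C}_{F}(D)$ are immediate once the above is in place. The definition of $\mathcal{F}$ refers only to the $p_{i}$ and to $\pi_{2}(x,y)$, neither of which sees the $X$-basepoints, so moving an $X$ within an interior region — or to another interior region, which is precisely the operation relating $C_{F}(D_{-})$, $C_{F}(D_{s})$ and $C_{F}(D_{x})$ and hence assembling $C_{F}(D)$ — only changes which $\phi$ satisfy $n_{X}(\phi)=0$ and contribute to $\partial$, affecting neither Step (i) (periodic domains carry no basepoint data) nor Step (ii) (positivity holds regardless). The same remarks apply to $\widetilde{C}_{F}(D)$: one adjoins the extra complementary region of the enlarged diagram to the collection $\{p_{i}\}$, reruns Steps (i)--(ii) verbatim, and observes that setting the new $U$-variable to $0$ passes the filtration to the quotient.
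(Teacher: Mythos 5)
First, a point of comparison: this paper does not prove the lemma at all — it is quoted from \cite{Me}, Section 3 — so your attempt can only be measured against the proof one would naturally reconstruct. Your skeleton (define a relative grading by $p_i$-multiplicities, reduce well-definedness to vanishing on periodic domains, get monotonicity from positivity of domains) is the right one, and your step (ii) is fine. But there are two genuine problems. First, as written the definition $\mathcal{F}(x)-\mathcal{F}(y):=\sum_i n_{p_i}(\phi)$ for \emph{any} $\phi\in\pi_2(x,y)$ is ill-posed: adding the class $[\Sigma]$ of the whole sphere to $\phi$ stays in $\pi_2(x,y)$ and changes $\sum_i n_{p_i}$ by the number of $p_i$'s. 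So you must normalize by restricting to classes with vanishing multiplicity at suitable $X$'s — which is what your step (i) tacitly does — and once you do, your later assertion that the definition ``does not see the $X$-basepoints,'' which is your entire justification for the independence statement and for the extension to $\widetilde{C}_F(D)$, is no longer automatic: one must check that the normalization uses only $X$'s that are not in the interior regions. Second, the heart of the lemma, namely $\sum_i n_{p_i}(P)=0$ for the relevant periodic domains, is not actually proved: you outline a classification of relative periodic domains piece by piece with telescoping gluing contributions, flag it yourself as the main obstacle, and the hint you give (the generator--cycle correspondence of Figure \ref{table}) concerns generators, not periodic domains, so it does not furnish the classification. As it stands this is a plan, not a proof.

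The gap can be closed more cheaply than your plan suggests. Every $\alpha$- and $\beta$-curve in the cube-of-resolutions diagram is a small circle supported in a neighborhood of the projection, bounding a disc there that contains none of the $p_i$ (which sit in the large complementary regions of Figure \ref{markings}) and not the $X$ on the marked edge or extra unknot component (which carries no circles of its own). Consequently every periodic domain has the form $P=\sum_i a_i A_i-\sum_j b_j B_j+c[\Sigma]$ with $A_i,B_j$ such small discs, so $n_{p_k}(P)=c$ for every $k$, and the multiplicity of $P$ at that marked $X$ is also $c$. Requiring $n_X(P)=0$ at this single $X$ — a condition satisfied by every class contributing to the differential, for \emph{any} placement of the interior $X$'s — forces $c=0$ and hence $n_{p_k}(P)=0$ for all $k$. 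This gives well-definedness, and because the argument never refers to the interior $X$'s it simultaneously gives the independence statement and the extension to $\widetilde{C}_F(D)$; positivity of domains then yields monotonicity exactly as in your step (ii). Some argument of this kind (or the piecewise check you defer) is indispensable; without it the central claim of the lemma is unestablished.
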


Let $\widetilde{C}_{F}(D,Z)$ denote the complex generated by the elements corresponding to the cycle $Z$.

\begin{lem}

Let $d^{f}$ denote the component of the differential which preserves the basepoint filtration (i.e. discs which do not pass through the $p_{i}$). The differential $d^{f}$ preserves $\widetilde{C}_{F}(D,Z)$, i.e. it does not change the underlying cycle of a generator.

\end{lem}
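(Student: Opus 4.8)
The plan is to reinterpret the statement in terms of domains and then localize it. A component of the differential carrying a generator $x$ to a generator $y$ counts holomorphic representatives of a class $\phi\in\pi_2(x,y)$, and so has an associated domain $D(\phi)$ --- a $2$-chain on the Heegaard surface $S^{2}$ supported on the regions of $\Sigma\setminus(\alpha\cup\beta)$ --- whose multiplicities $n_{p_i}(\phi)$ at the basepoints $p_i$ record the change of basepoint filtration, by the previous lemma. It therefore suffices to show: if $\phi\in\pi_2(x,y)$ satisfies $n_{p_i}(\phi)=0$ for every $i$, then $Z_x=Z_y$. I would first note that the part of the cube-complex differential on $\widetilde C_F(D)$ coming from $U_i$-multiplication and from the ``identity-type'' maps $X\to X$ fixes the underlying intersection point and hence trivially fixes the cycle, so only the genuine disc-count maps need attention; and since the $p_i$ are in bijection with the regions of $S^{2}\setminus D$, the hypothesis $n_{p_i}(\phi)=0$ for all $i$ forces $D(\phi)$ to be supported in a neighborhood $N(D)$ of the graph $D$. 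A connected domain in $N(D)$ decomposes into pieces lying near the edges, near the crossings, and near the insertions of $D$; near a crossing or insertion the Heegaard diagram is the one in Figure \ref{labeled diagram} (or its singular and insertion modifications, Figures \ref{singdiagram} and \ref{HDMarkedEdge}), and only the handful of $p_i$ in the regions adjacent to that piece can interact with it.

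The heart of the argument is to read the cycle directly off of these multiplicities. Recall that $e\in Z_x$ precisely when $x$ occupies the outer arc running along $e$ (one of $\alpha_3,\alpha_4,\beta_3,\beta_4$ in the local model). Fix an edge $e$ and a short arc $\delta_e$ transverse to its outer $\alpha$-arc, with endpoints in the two regions of $S^{2}\setminus D$ on either side of $e$; the hypothesis makes the multiplicity of $D(\phi)$ vanish at both endpoints, so the local multiplicity of $\partial_\alpha D(\phi)$ at the point where $\delta_e$ meets the outer arc is $0$. Because $\partial\big(\partial_\alpha D(\phi)\big)=\mathbf{y}-\mathbf{x}$, this multiplicity function is locally constant along each $\alpha$-circle away from the (distinct) points of $x$ and $y$, jumping by $\pm1$ at each of them; tracking it around the circles, the vanishing of all the transverse-crossing numbers --- together with $D(\phi)\subseteq N(D)$ --- forces $x$ and $y$ to occupy the same outer arc on every circle, i.e. $Z_x=Z_y$. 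Equivalently, one can package this homologically: $Z_x-Z_y$ is a $1$-cycle in $D\subset S^{2}$, hence equals $\partial\big(\sum_R c_R R\big)$ for region-coefficients $c_R\in\Z$ unique up to a global constant, and the transverse-arc computation gives $c_R-c_{R'}=\pm\big(n_{p_R}(\phi)-n_{p_{R'}}(\phi)\big)=0$ for adjacent regions $R,R'$, so all the $c_R$ coincide and $Z_x=Z_y$.

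The main obstacle is the local bookkeeping at the $4$-valent vertices and at the insertions: one has to verify, by inspection of the local diagrams, that a domain confined to $N(D)$ with all adjacent $p_i$-multiplicities zero genuinely cannot slide an intersection point from one outer arc onto another across a crossing, and that the pieces of such a domain glue to a global $2$-chain compatibly with the ``incoming $=$ outgoing'' cycle condition at every vertex. Concretely this comes down to a finite check against the cycle table of Figure \ref{table}: for each crossing type one lists the local generators, records their cycle labels, enumerates the connecting domains, and confirms that the filtration-preserving ones never cross between labels. Once this local verification is in place, reassembling the global domain from its local pieces and summing the contributions is routine.
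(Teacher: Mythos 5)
Your central step fails. The local multiplicity of $\partial_{\beta}D(\phi)$ (or $\partial_{\alpha}D(\phi)$) at a point of an outer arc is the difference of the domain multiplicities in the two Heegaard regions immediately adjacent to the arc at that point, and along every outer arc one of those two regions is a small interior region --- notably the bigon containing the $O_{i}$ of that edge --- whose multiplicity is not controlled by the hypothesis: $d^{f}$ only avoids the $p_{i}$ (and the $X$'s), while multiplicities at the $O_{i}$ are arbitrary, since they are exactly what produce the $U_{i}$ coefficients. So the vanishing of $D(\phi)$ at the two endpoints of your transverse arc $\delta_{e}$ does not give vanishing of the boundary multiplicity where $\delta_{e}$ meets the outer arc. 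Concretely, the bigon through $O_{1}$ with corners $c_{1},c_{2}$ in Figure \ref{labeled diagram}, which contributes the $U_{1}$ component of $d^{f}$ at a singular vertex, has $n_{p_{i}}=0$ for every $i$, yet its boundary runs along the outer arc of $\beta_{3}$ with multiplicity $1$. This example is consistent with the lemma (both of its endpoints occupy that arc), but it refutes your intermediate claim, and with it the conclusion ``all transverse-crossing numbers vanish, hence $x$ and $y$ occupy the same outer arcs'' and the asserted identity $c_{R}-c_{R'}=\pm\bigl(n_{p_{R}}(\phi)-n_{p_{R'}}(\phi)\bigr)$, which is never established independently. A secondary problem: $n_{p_{i}}(\phi)=0$ does not confine $D(\phi)$ to a neighborhood of the graph, because the large $\alpha$- and $\beta$-circles subdivide each complementary region of the projection and $p_{i}$ lies in only one of the resulting Heegaard regions (Figure \ref{markings}).

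Once the flawed transverse-arc computation is removed, what is left is precisely the unproved content of the lemma: one must relate which outer arcs are occupied by $x$ and by $y$ to the multiplicities of $\phi$ in the small regions surrounding each edge and each vertex, using the corner relations at the points $a_{i},b_{i},c_{i},\dots$ of Figure \ref{labeled diagram} together with $n_{X}(\phi)=0$ at the vertices and the table in Figure \ref{table}. That is exactly the ``finite check'' you defer to your final paragraph, so the proposal assumes the heart of the statement rather than proving it. Note also that the paper itself does not prove this lemma but imports it from \cite{Me}, Section 3; a complete argument here would have to carry out that local analysis in full.
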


\subsection{The Homology of a Fully Singular Diagram}

Since we are interested in studying the spectral sequence on $\widetilde{C}_{F}(D)$ induced by the cube filtration, the first thing we'll need to understand is the $E_{1}$ page. If $S_{v}$ is the fully singular braid at the vertex $v$ in the cube of resolutions, then 

\[ (\widetilde{C}_{F}(D), d_{0}) = \bigoplus_{v} \widetilde{C}_{F}(S_{v}) \]

We computed the homology $\widetilde{C}_{F}(S)$ for a singular braid $S$ in \cite{Me2}, and we will summarize the results here. Since the basepoint filtered differential splits over the cycles, we have that 

\[ H_{*}(\widetilde{C}_{F}(S), d^{f}) = \bigoplus_{Z} H_{*}(\widetilde{C}_{F}(S,Z)) \]

Let $V_{2}(S)$ and $V_{4}(S)$ denote the set of 2-valent and 4-valent vertices in $S$. The set of cycles $Z$ which have generators in $\widetilde{C}_{F}(S)$ are precisely those cycles which do not have include all four edges adjacent to any vertex in $V_{4}$, and which do not pass through the decorated bivalent vertex.

Given any such cycle, let $W_{2}(Z)$ denote the set of bivalent vertices which are not the endpoint of an edge in $Z$, and $W_{4}(Z)$ the set of 4-valent vertices which are not the endpoint of an edge in $Z$. 

Given a vertex $v$, define $L(v)$ to be the sum of all edges adjacent to $v$, and for $v \in V_{4}(S)$, let $Q(v)$ denote the product of the two incoming edges plus the product of the two outgoing edges. 

The total complex for $\widetilde{C}_{F}(S,Z)$ is given by

\[ \widetilde{C}_{F}(S,Z) = \Big[ \bigotimes_{e_{i} \in Z} R \xrightarrow{\hspace{0mm}U_{i}\hspace{0mm}} R \Big] \otimes \Big[ \bigotimes_{v \in W_{2}(Z)} R \xrightarrow{\hspace{-.25mm}L(v)\hspace{-.25mm}} R \Big] \otimes \Big[ \bigotimes_{v \in W_{4}(Z)} R \xrightarrow{\hspace{-.25mm}Q(v)\hspace{-.25mm}} R \Big] \otimes \Big[ \bigotimes_{v \in V_{4}(S)} R \xrightarrow{\hspace{-.25mm}L(v)\hspace{-.25mm}} R \Big]  \]

\noindent
This can be seen by counting bigons in Figures \ref{singdiagram} and \ref{HDMarkedEdge}.

\begin{lem}\label{bigthm}

Let $\widetilde{H}_{H}(D)$ denote the unreduced HOMFLY-PT homology of $D$. The filtered homology $H_{*}(\widetilde{C}_{F}(S,Z),d_{0})$ is isomorphic to $\widetilde{H}_{H}(S-Z)$.

\end{lem}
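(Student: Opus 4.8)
The plan is to recognize both sides as the homology of explicit Koszul complexes over polynomial rings in the edge variables, and then to match the two complexes combinatorially. On the Floer side, the displayed total complex for $\widetilde C_F(S,Z)$ is, by the very definition of a Koszul complex, the complex $K_R(\mathbf f)$ on the sequence
\[\mathbf f = \big(U_i : e_i\in Z\big)\cup\big(L(v) : v\in W_2(Z)\big)\cup\big(Q(v) : v\in W_4(Z)\big)\cup\big(L(v) : v\in V_4(S)\big).\]
On the HOMFLY--PT side I would recall, from the Koszul model for $\widetilde H_H$ of a singular braid (see \cite{Me2}, cf.\ \cite{Rasmussen}), that $\widetilde H_H(S-Z)$ is the homology of the Koszul complex over the edge ring $\Z_2[U_e : e\in E(S-Z)]$ on the relations of $S-Z$: one relation $L(v)=\sum(\text{edges at }v)$ per bivalent vertex, and two relations $L(v)$ and $Q(v)$ per $4$-valent vertex. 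The elementary point that makes this match possible is that over $\Z_2$ the wide-edge relations ``sum of incoming $=$ sum of outgoing'' and ``product of incoming $=$ product of outgoing'' are exactly $L(v)$ and $Q(v)$, while the subdivision relation ``$x=y$'' at a bivalent vertex is $L(v)=x+y$.

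Next I would set up the combinatorial dictionary between $S-Z$ and the data attached to $(S,Z)$. Because $Z$ is a homological cycle omitting at least one edge at every $4$-valent vertex, it meets each $4$-valent vertex either in no edge or in exactly one incoming and one outgoing edge, and it meets each bivalent vertex either not at all or in both of its edges. Hence $E(S-Z)=E(S)\setminus Z$; the $4$-valent vertices of $S-Z$ are exactly $W_4(Z)$; the bivalent vertices of $S-Z$ are exactly $W_2(Z)$ together with the $4$-valent vertices of $S$ meeting $Z$ (now bivalent, having lost one in- and one out-edge), while the bivalent vertices of $S$ lying on $Z$ simply drop out. Reducing the Floer sequence $\mathbf f$ modulo the ideal $(U_i : e_i\in Z)$, I would then check that it becomes precisely the HOMFLY--PT relation sequence of $S-Z$: for $v\in W_4(Z)$ one has $L_S(v)=L_{S-Z}(v)$ and $Q_S(v)=Q_{S-Z}(v)$; for a $4$-valent $v$ meeting $Z$ along edges $b,c$ one has $L_S(v)=a+b+c+d\equiv a+d=L_{S-Z}(v)$; and the $W_2(Z)$ relations are unchanged. (Closed components of $S-Z$, if present, are handled by the same uniform correspondence, their higher Koszul homology arising identically on both sides from syzygies among the relations; I would also check here that the decorated-edge convention built into $\widetilde C_F$ corresponds to the unreduced normalization of $\widetilde H_H$, as in \cite{Me2}.)

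It then remains to pass from $K_R(\mathbf f)$ to the Koszul complex over the quotient ring $\bar R := R/(U_i : e_i\in Z)$. Since $(U_i : e_i\in Z)$ is a regular sequence of variables in the polynomial ring $R$, the complex $K_R(U_i : e_i\in Z)$ is a free resolution of $\bar R$, and $K_R(\mathbf f)\cong K_R(U_i : e_i\in Z)\otimes_R K_R(\widetilde{\mathbf f})$, where $\widetilde{\mathbf f}$ denotes the remaining generators. Tensoring the quasi-isomorphism $K_R(U_i:e_i\in Z)\xrightarrow{\ \simeq\ }\bar R$ with the bounded complex of free modules $K_R(\widetilde{\mathbf f})$ yields $H_*\big(K_R(\mathbf f)\big)\cong H_*\big(\bar R\otimes_R K_R(\widetilde{\mathbf f})\big)=H_*\big(K_{\bar R}(\overline{\widetilde{\mathbf f}})\big)$, which by the previous step is exactly the Koszul complex computing $\widetilde H_H(S-Z)$. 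Finally I would verify that the identification respects the gradings: the $(gr_q,gr_h,gr_v)$-degrees of the Koszul generators are determined by the degrees of $U_i$, $L(v)$, and $Q(v)$, which were fixed in Section~2.2 precisely so as to agree with the HOMFLY--PT conventions, so the isomorphism is one of triply graded modules.

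I expect the main obstacle to be the combinatorial bookkeeping of the second step --- getting exactly right what $S-Z$ looks like at the $4$-valent vertices of $S$ that meet $Z$ (they must become bivalent with the correct surviving edges, so that $L_S(v)$ reduces to the right subdivision relation) and at the bivalent vertices lying on $Z$ (which must disappear), together with the treatment of closed components and the alignment of the decorated-edge and unreduced normalizations. Once the two Koszul complexes have been correctly identified, the homological-algebra step and the grading check are routine.
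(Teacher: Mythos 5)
Your argument is correct and follows essentially the same route as the paper: the paper records precisely the Koszul-type total complex for $\widetilde{C}_{F}(S,Z)$ (obtained by counting bigons) and defers the identification with $\widetilde{H}_{H}(S-Z)$ to \cite{Me2}, where the argument is exactly your cancellation of the regular sequence $\{U_{i} : e_{i} \in Z\}$ followed by matching the surviving relations $L(v)$, $Q(v)$ over $R/(U_{i}: e_{i}\in Z)$ with the HOMFLY-PT Koszul relations of $S-Z$ (a $4$-valent vertex met by $Z$ becoming a bivalent vertex of $S-Z$, etc.). One minor caveat: your closing claim that the identification preserves the triple grading on the nose is stronger than what the paper asserts --- immediately after the lemma it introduces $Z$-dependent shifts $\{q(Z), a(Z)\}$ to make the isomorphism bigraded --- but since the lemma itself is stated without gradings this does not affect the validity of your proof.
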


This tells us that the basepoint-filtered homology of $\widetilde{C}_{F}(S)$ decomposes as a direct sum

\[ H_{*}(\widetilde{C}_{F}(S), d^{f}) \cong \bigoplus_{Z}\widetilde{H}_{H}(S-Z) \]

\noindent
The HOMFLY-PT homology of a singular link is bigraded, with bigrading $(gr_{q}, gr_{h})$ (because the cube grading is fixed). Using the quantum and horizontal gradings on $\widetilde{C}_{F}(S)$, this can be made into an isomorphism of bigraded vector spaces:

\[ H_{*}(\widetilde{C}_{F}(S), d^{f}) \cong \bigoplus_{Z}\widetilde{H}_{H}(S-Z)\{q(Z), a(Z)\} \]

\noindent
where the grading shifts $q(Z)$ and $a(Z)$ depend on combinatorial data from $Z$.

The following two theorems are the main results from \cite{Me2}:

\begin{thm}\label{thm2.4} With coefficients in $\Q$, there is an isomorphism of bigraded vector spaces 

\[ \bigoplus_{Z}\widetilde{H}_{H}(S-Z)\{q(Z), a(Z)\}  \cong \widetilde{H}_{H}(S)\langle1\rangle \]

\noindent
where $\langle1\rangle$ denotes a grading shift $(gr_{q}, gr_{h}) \mapsto (gr_{q}+kgr_{h}, gr_{h})$.

\end{thm}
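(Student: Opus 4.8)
The plan is to read the statement as a categorified MOY decomposition and to prove it by induction on the number of four‑valent vertices of $S$, after first reducing both sides to an identity of bigraded Poincar\'e series.

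The first step is to record a uniform algebraic model for every summand. By the explicit complex displayed just before Lemma~\ref{bigthm}, $\widetilde H_H(S-Z)$ is the homology of a Koszul complex over $R$ on the sequence made of the $U_i$ with $e_i\in Z$, the linear forms $L(v)$ with $v\in W_2(Z)$, the quadratic forms $Q(v)$ with $v\in W_4(Z)$, and the linear forms $L(v)$ with $v\in V_4(S)$; in particular the $Z=\emptyset$ summand is precisely the standard Koszul model computing $\widetilde H_H(S)$ itself. Over $\Q$ each of these Koszul complexes is built on (what amounts to) a regular sequence, so each $\widetilde H_H(S-Z)$ is a free module over the appropriate subring of $R$ with a combinatorially determined basis, and its bigraded Poincar\'e series $P_{S-Z}(q,h)$ is read directly off the MOY graph $S-Z$. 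Since $Z$ ranges over the finitely many admissible cycles, both sides of the asserted isomorphism are graded $\Q$-modules with finite‑dimensional graded pieces, so it suffices to establish the identity of Poincar\'e series
\[
\sum_{Z}\, q^{\,q(Z)}\,h^{\,a(Z)}\, P_{S-Z}(q,h)\;=\;P_{S}(q,\,q^{k}h)\,,
\]
where $k$ is the shear exponent in $\langle 1\rangle$; equal Poincar\'e series then give the abstract bigraded isomorphism.

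The second step is the induction. When $S$ has no four‑valent vertex it is a disjoint union of circles carrying only bivalent vertices, the admissible cycles are the unions of circles, and both sides reduce to a routine comparison of Poincar\'e series of unlinks. For the inductive step, pick a four‑valent vertex $v_0$ with incoming edges $a,b$ and outgoing edges $c,d$ and partition the admissible cycles by how they meet $v_0$: either $Z$ avoids $v_0$, so $v_0$ survives in $S-Z$ and these $Z$ are exactly the admissible cycles of the graph with $v_0$ kept; or $Z$ contains exactly one of the four opposite pairs $\{a,c\}$, $\{a,d\}$, $\{b,c\}$, $\{b,d\}$, so that $v_0$ is replaced in $S-Z$ by a bivalent vertex along the complementary pair. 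Reassembling the one ``avoiding'' family together with the four ``through'' families and matching the result against the local categorified MOY relation at $v_0$ — the exact decomposition of $\widetilde H_H$ at a thick edge in terms of its oriented resolution, which holds over $\Q$ — reduces the Poincar\'e‑series identity for $S$ to the inductive hypothesis for the graphs with one fewer four‑valent vertex. Equivalently, one may phrase the whole argument as the collapse at $E_1$ of the basepoint‑filtration spectral sequence on $(\widetilde C_F(S),d_0)$, whose target is identified with $\widetilde H_H(S)\langle 1\rangle$.

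The step I expect to be the main obstacle is the grading bookkeeping in the reassembly: one must verify that the shifts $q(Z),a(Z)$, which are defined through the $C_F$-gradings, reorganize term by term into the grading shifts attached to the local MOY relation at $v_0$, and that the global shear $\langle 1\rangle$ is exactly the discrepancy between the $C_F$-normalization of the three gradings and the usual HOMFLY‑PT normalization. This forces one to work with the precise combinatorial formulas for $q(Z)$ and $a(Z)$ in terms of the writhe, the number of edges of $Z$, and the Seifert‑circle count, rather than with their mere existence. The hypothesis of $\Q$ coefficients enters here and in the first step: it is what guarantees that every Koszul complex in sight has the MOY‑predicted homology and that the local decomposition at a thick edge genuinely splits off as a direct summand, so that the Poincar\'e‑series identity, and with it the claimed isomorphism of bigraded vector spaces, is valid.
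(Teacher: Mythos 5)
Two remarks, the second being the substantive one. First, this paper does not actually prove Theorem \ref{thm2.4}: it is imported verbatim from \cite{Me2} (it is the categorified composition product), and the only argument given here is the extension of its conclusion from $\Q$ to $\Z_{2}$. So your proposal has to stand on its own, and as written it has two genuine gaps.

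The first gap is your claim that each complex computing $\widetilde{H}_{H}(S-Z)$ is a Koszul complex on ``what amounts to a regular sequence,'' so that each summand is free over a subring and its bigraded Poincar\'e series can be read directly off the graph. For a \emph{closed} singular diagram the sequence of elements $L(v)$, $Q(v)$ is not regular, and the Koszul homology is not concentrated in a single horizontal degree: already for the unknot the unreduced homology $\widetilde{H}_{H}$ occupies two distinct $gr_{h}$-gradings (this is the $a-a^{-1}$ factor in the polynomial), which is impossible for the top Koszul homology of a regular sequence. Computing the bigraded series $P_{S-Z}$ is therefore not a formality; it is essentially the content of the theorem, so the reduction to ``check an identity of Poincar\'e series'' has not actually discharged the hard part. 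The second and more serious gap is the inductive step. You invoke ``the exact decomposition of $\widetilde{H}_{H}$ at a thick edge in terms of its oriented resolution'' as a local categorified MOY relation at a single $4$-valent vertex. No such relation exists: the MOY decompositions available for HOMFLY-PT homology apply only to special local configurations (circle removal, digons, squares), not to an arbitrary single vertex of a closed graph, and the identity underlying Theorem \ref{thm2.4} is the \emph{global} composition product rather than a vertex-by-vertex resolution. Your recursion therefore has no valid engine. Finally, the closing ``equivalently'' is circular: identifying the target of the basepoint-filtration spectral sequence with $\widetilde{H}_{H}(S)\langle 1\rangle$ is precisely what Theorem \ref{thm2.4} (together with Theorem \ref{thm2.5} and Lemma \ref{bigthm}) is designed to establish, and the collapse itself is the separate Theorem \ref{thm2.5}. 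Two smaller points: the sum in the statement runs over all cycles with generators in $\widetilde{C}_{F}(S)$, not only admissible ones (admissibility concerns crossings of $D$, not a fixed singular $S$), and your argument never pins down the shear exponent $k$, whose verification is part of the grading bookkeeping you correctly flag as delicate.
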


\begin{thm} \label{thm2.5}

With coefficients in $\Q$, the spectral sequence on $\widetilde{C}_{F}(S)$ induced by the basepoint filtration collapses at the $E_{1}$ page.

\[ H_{*}(\widetilde{C}_{F}(S), d) \cong H_{*}(\widetilde{C}_{F}(S), d^{f}) \]

\end{thm}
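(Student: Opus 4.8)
The plan is to prove collapse by a grading argument: every higher differential in the spectral sequence will be forced to vanish because it would have to change a grading that is constant on the $E_1$ page.

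Start from Theorem~\ref{thm2.4}: over $\Q$ the $E_1$ page, which by the discussion preceding that theorem is $H_*(\widetilde{C}_F(S),d^f)\cong\bigoplus_Z\widetilde{H}_H(S-Z)\{q(Z),a(Z)\}$, is isomorphic as a bigraded vector space to $\widetilde{H}_H(S)\langle 1\rangle$. The essential input is that $\widetilde{H}_H(S)$, the unreduced HOMFLY--PT homology of the \emph{singular} braid $S$, is concentrated in a single horizontal grading $gr_h$; equivalently, the HOMFLY--PT homology of a singular braid is nothing but its MOY state sum, a free module placed in a single homological bidegree. Concretely this can be read off from the description of $\widetilde{C}_F(S,Z)$ just before Lemma~\ref{bigthm} as the Koszul complex on the sequence $\bigl(\{U_i\}_{e_i\in Z},\,\{L(v)\}_{v\in W_2(Z)},\,\{Q(v)\}_{v\in W_4(Z)},\,\{L(v)\}_{v\in V_4(S)}\bigr)$ over $R$: one needs that this sequence is regular up to reordering, which concentrates its Koszul homology in the single homological degree where the quotient ring sits, together with the fact that the shifts $\{q(Z),a(Z)\}$ place all the summands $\widetilde{H}_H(S-Z)$ in a common $gr_h$-value (automatic from Theorem~\ref{thm2.4}, or checkable from the shift formulas). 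Alternatively one imports the $gr_h$-concentration from the known structure of HOMFLY--PT homology of MOY graphs in \cite{Krasner},~\cite{Rasmussen}. Since the shift $\langle 1\rangle$ leaves $gr_h$ unchanged, $E_1$ is supported in a single value $gr_h=h_0$.

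Next I record the degree of every page differential. On the fully singular complex the total differential $d$ is just the vertex differential $d_0$, which is homogeneous of tridegree $(2,2,0)$ in $(gr_q,gr_h,gr_v)$; in particular it raises $gr_h$ by $2$. The basepoint filtration is a single $\Z$-filtration, its $E_0$ differential is $d^f$, and the $d_r$ for $r\ge 1$ are the components of $d_0$ that jump the basepoint filtration by exactly $r$. Because $gr_h$ is an honest grading on $\widetilde{C}_F(S)$ --- built from the Maslov and Alexander gradings, independent of the basepoint filtration --- splitting the $gr_h$-homogeneous, degree-$2$ map $d_0$ according to filtration jump keeps each piece $gr_h$-homogeneous of degree $2$, and so is the induced map on every page. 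Hence $d_r$ maps the part of $E_r$ in grading $h_0$ into grading $h_0+2$, which is zero. Therefore $d_r=0$ for all $r\ge 1$, the spectral sequence collapses at $E_1$, and $H_*(\widetilde{C}_F(S),d)\cong E_1=H_*(\widetilde{C}_F(S),d^f)$.

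The one real obstacle is the $gr_h$-concentration of $\widetilde{H}_H(S)$: proving that the explicit sequences above are regular is where the combinatorics of the singular diagram enter --- planarity of $S$, the precise linear terms $L(v)$, the quadratic terms $Q(v)$, and which vertices fall into $W_2(Z)$ versus $W_4(Z)$ --- and it is also where the characteristic-zero hypothesis is used. Everything after that input is formal. As a consistency check, collapse is compatible with Euler characteristics: one always has $\chi(E_1)=\chi\bigl(H_*(\widetilde{C}_F(S),d)\bigr)$, and under $gr_h$-concentration this upgrades to an equality of Poincar\'e polynomials, which is exactly $E_1=E_\infty$.
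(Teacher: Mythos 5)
Your argument stands or falls on the claim that $\widetilde{H}_{H}(S)$, and hence the $E_{1}$ page $\bigoplus_{Z}\widetilde{H}_{H}(S-Z)\{q(Z),a(Z)\}$, is concentrated in a single horizontal grading, and that claim is false. The \emph{unreduced} HOMFLY-PT homology of a closed diagram is never supported in one $gr_{h}$: its graded Euler characteristic is the (MOY/HOMFLY-PT) polynomial, which is not a monomial in $a$, and by the formula in Section \ref{Euler} the $a$-exponent is exactly $j=gr_{h}$ (already for the crossingless unknot one gets $\frac{a-a^{-1}}{q-q^{-1}}$, i.e.\ two horizontal gradings). Equivalently, the Koszul sequence you propose to show is regular is never regular for a closed diagram: each edge appears once as incoming and once as outgoing, so the linear elements $L(v)$ taken over all vertices satisfy a relation (over $\Q$, with signs, they sum to zero), and in the simplest example --- the closure of a single singularized crossing on two strands --- both $L(v)$ and $Q(v)$ vanish identically, so the vertex homology is free of rank $4$ spread over three Koszul ($gr_{h}$) degrees. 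The MOY-decomposition fact you invoke gives freeness (torsion-freeness), not concentration in a single homological degree; concentration holds for open braid graphs at the Soergel-bimodule level, but not after closing up, which is precisely the setting here. The paper's own statements signal this: the shearing shift $\langle 1\rangle\colon (gr_{q},gr_{h})\mapsto(gr_{q}+k\,gr_{h},gr_{h})$ in Theorem \ref{thm2.4} only has content because $gr_{h}$ is non-constant, and the shifts $a(Z)$ vary with the cycle $Z$.

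With the premise gone, no argument of this shape can work: the higher differentials of the basepoint spectral sequence are components of the same vertex differential $d_{0}$ and carry exactly the same tridegree $(2,2,0)$ (and the same Maslov/Alexander behavior) as $d^{f}$, so none of the gradings available on $\widetilde{C}_{F}(S)$ distinguishes them from the filtered differential; degeneration genuinely requires structural information about the $E_{1}$ page beyond gradings. Note also that the present paper does not prove Theorem \ref{thm2.5} --- it is quoted as a main result of \cite{Me2}, where the collapse is established by a different and substantially more involved argument (the only new content here is the remark extending it from $\Q$ to $\Z_{2}$). Finally, your closing ``consistency check'' is empty: $\chi(E_{1})=\chi(E_{\infty})$ holds for any spectral sequence and never implies collapse, and the promotion to Poincar\'e polynomials presupposes exactly the concentration that fails.
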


We need these theorems to hold over $\Z_{2}$ as well. When computing HOMFLY-PT homology, the only complexes that are contracted have coefficient $\pm1$ (reference), so the rank of the homology of  $\widetilde{H}_{H}(S)$ in each bigrading is independent of the ground field. This proves that Theorem \ref{thm2.4} holds over $\Z_{2}$. 

Since the only complexes that were contracted when calculating $H_{*}(\widetilde{C}_{F}(S), d^{f})$ over $\Q$ had coefficient $\pm1$, the fact that all higher differentials are $0$ over $\Q$ implies that all higher differentials are also $0$ over $\Z_{2}$, proving that Theorem \ref{thm2.5} also holds over $\Z_{2}$.

These theorems give us some powerful information that will come in handy when calculating $E_{2}(\widetilde{C}_{F}(D))$. In particular, we see that the induced edge map $d_{1}^{*}$ can be filtered by the basepoint filtration, so long as we compute $E_{1}(\widetilde{C}_{F}(D))$ by canceling filtered differentials first. They also allow us to compute the graded Euler characteristic, which will be done in the next section. 

\subsection{The Graded Euler Characteristic} \label{Euler} We define the HOMFLY-PT polynomial via the skein relation

\[ aP_{H}(a,q,D_{+}) -a^{-1}P_{H}(a,q,D_{-})= (q-q^{-1})P_{H}(a,q,D_{s}) \]

\noindent
normalized to $P_{H}(unknot)=\frac{a-a^{-1}}{q-q^{-1}}$. 

Using $(gr_{q}, gr_{h}, gr_{v})$ as the triple grading on $\widetilde{H}_{H}(K)$, Rasmussen showed that

\[
 \sum_{i,j,k} (-1)^{(k-j)/2}q^{i}a^{j} dim(\widetilde{H}_{H}^{i,j,k}(D)) = P_{H}(a,q,m(K))
\]

\noindent
where $m(D)$ denotes the mirror of the knot $D$. 

\begin{rem}
Rasmussen doesn't present this in terms of the mirror of the knot $K$, but rather defines a different version of the HOMFLY-PT polynomial, which we will denote $P_{H}^{m}$, via the skein relation 

\[ aP^{m}_{H}(a,q,D_{-}) -a^{-1}P^{m}_{H}(a,q,D_{+})= (q-q^{-1})P^{m}_{H}(a,q,D_{s}) \]

\end{rem}

Using the triple grading $(gr_{q}, gr_{h}, gr_{v})$ on $E_{2}(\widetilde{C}_{F}(D))$, we see that the Poincare polynomial of $\widetilde{H}_{F}(S)$ is the same as that of $\widetilde{H}_{H}(S)$ with $aq$ substituted for $a$. In both the HOMFLY-PT complex and the knot Floer cube of resolutions, the edge maps have triple grading $(0,0,2)$, so the sum is alternating in the cube grading. It follows that

\[
 \sum_{i,j,k} (-1)^{(k-j)/2}q^{i}a^{j} dim(E_{2}(\widetilde{C}_{F}(D))^{i,j,k}(D)) = P_{H}(aq,q, m(D))
\]

\noindent
For a more detailed exposition on the graded Euler characteristic, see \cite{Me}, Section 4.

Although this proves that the graded Euler characteristic gives a version of the HOMFLY-PT polynomial, it is preferable to have a triple grading on $E_{2}(\widetilde{C}_{F}(D))$ that gives the standard HOMFLY-PT polynomial. The HOMFLY-PT polynomial for a knot and it's mirror are related by the following formula:

\[ P_{H}(a,q, m(D)) = P_{H}(a^{-1}, q^{-1}, D) \]

\noindent
Additionally, the modification $(gr_{q}, gr_{h}) \mapsto (gr_{q}-gr_{h}, gr_{h})$ maps $aq \mapsto a$ on the level of polynomials. Thus, if we redefine the triple grading to be $(-gr_{q}+gr_{h}, -gr_{h}, gr_{v})$, then  

\[
 \sum_{i,j,k} (-1)^{(k-j)/2}q^{i}a^{j} dim(E_{2}^{i,j,k}(\widetilde{C}_{F}(D))) = P_{H}(a,q,D)
\]

\subsection{The Filtered Edge Maps} \label{filterededgemaps}

Some of the local cycles only appear in one of the two resolutions of a crossing, so they won't have any edge maps. Using the notation from Figure \ref{table}, the cycles $Z_{14}$ and $Z_{23}$ will only have non-trivial homology at the singularization, and $Z_{1234}$ will only have non-trivial homology at the smoothing. However, the cycles $Z_{\emptyset}$, $Z_{13}$, and $Z_{24}$ appear non-trivially in both the singularization and the smoothing. The filtered edge maps for these cycles are computed in detail in \cite{Me}, and we will summarize the results here.

Let $S_{1}$ and $S_{2}$ be two resolutions of a diagram $D$ which only differ in how a single positive crossing is resolved, with $S_{1}$ the singularization and $S_{2}$ the smoothing. For the empty cycle $Z_{\emptyset}$, we want to understand the map 

\[ d_{1}^{f}: \widetilde{H}_{H}(S_{1}-Z_{\emptyset}) \to \widetilde{H}_{H}(S_{2}-Z_{\emptyset}) \]

Since $S_{1}-Z_{\emptyset}$ and $S_{2}-Z_{\emptyset}$ differ at a single crossing where the former is singularized and the latter is smoothed, there is an obvious candidate for this map: the standard HOMFLY-PT edge map for a positive crossing, which we will denote $\Delta_{+}$. In \cite{Me}, we compute $d_{1}^{f}$ directly and prove that this is in fact the edge map for the empty cycle.

For the cycle $Z_{13}$, we see by inspection that $S_{1}-Z_{13}$ and $S_{2}-Z_{13}$ are actually the same singular diagram, so there is a canonical isomorphism between $\widetilde{H}_{H}(S_{1}-Z_{13})$ and $\widetilde{H}_{H}(S_{2}-Z_{13})$. The map 

\[ d_{1}^{f}: \widetilde{H}_{H}(S_{1}-Z_{13}) \to \widetilde{H}_{H}(S_{2}-Z_{13}) \]

\noindent
is this isomorphism.

The cycle $Z_{24}$ is similar, in that $S_{1}-Z_{24}$ and $S_{2}-Z_{24}$ are the same singular diagram. However, the edge map in this case is not an isomorphism, but instead it is multiplication by $U_{1}$.

When the crossing is negative instead of positive, the edge maps go from $S_{2}$ to $S_{1}$ instead. For the empty cycle $Z_{\emptyset}$, the map 

\[ d_{1}^{f}: \widetilde{H}_{H}(S_{2}-Z_{\emptyset}) \to \widetilde{H}_{H}(S_{1}-Z_{\emptyset}) \]

\noindent
is the HOMFLY-PT edge map for a negative crossing, $\Delta_{-}$. For the cycles $Z_{13}$ and $Z_{24}$, the maps are opposite those for the positive crossing. In particular, the map from $ \widetilde{H}_{H}(S_{2}-Z_{13})$ to $ \widetilde{H}_{H}(S_{1}-Z_{13})$ is multiplication by $U_{4}$, while the map from $ \widetilde{H}_{H}(S_{2}-Z_{24})$ to $ \widetilde{H}_{H}(S_{1}-Z_{24})$ is the isomorphism. These results are summarized in Figure \ref{table2}.

\begin{figure}
\begin{center}
  \begin{tabular}{ l | c | c }

    Cycles & (+) Edge Map & (-) Edge Map \\ \hline
    $Z_{\phi}$ & $\Delta_{+}$ & $\Delta_{-}$\\ \hline
    $Z_{13}$ & $1$ & $U_{4}$ \\ \hline
    $Z_{24}$ & $U_{1}$ & $1$ \\ \hline
    $Z_{23}$ & $0$ & $0$ \\ \hline
    $Z_{14}$ & $0$ & $0$ \\ \hline
    $Z_{1234}$ & $0$ & $0$ \\ 
    \hline
  \end{tabular}
\end{center}
\caption{Summary of the Filtered Edge Maps} \label{table2}
\end{figure}

We refer to the local cycle $Z_{13}$ as a `left turn' and the local cycle $Z_{24}$ as a `right turn.' It follows from these edge maps that any cycle which makes a left turn at a positive crossing or a right turn at a negative crossing will have no contribution to the homology of the $E_{2}$ page. For this reason, we say that a cycle is \emph{admissible} if it makes no left turns at positive crossings or right turns at negative crossings.

\begin{defn}

We say that a complex has been \emph{reduced} at an edge $e_{i}$ if it is tensored with the complex 

\[ R \xrightarrow{\hspace{2mm}U_{i}\hspace{2mm}} R \]

\noindent
where the map increases the cube grading by $1$ (and treats the other two gradings the usual way for edge maps).

\end{defn}

\begin{defn}

Let $D$ be a diagram for a link $L$, and let $n$ be greater than the number of components of $L$. We define $\widetilde{H}_{H}(D,n)$ to be the complex $\widetilde{H}_{H}(D)$ reduced at $n$ edges, with at least one edge on each component of $L$.

\end{defn}

To see that this is well-defined and does not depend on the choice of edges, apply the theorem from \cite{Rasmussen} that multiplication by edges on the same component of $D$ gives the same action on homology.

Using the above computations, we see that the homology for the empty cycle is just HOMFLY-PT homology of the the whole diagram, $\widetilde{H}_{H}(D)$. However, when $D$ is a diagram of a knot, then any non-empty cycle $Z$ must make at least one turn. In fact, if we look at each component of $D-Z$, $Z$ must make a turn adjacent to this component. Thus, if $Z$ is admissible, then each component of $D-Z$ gets reduced by an edge map coming from a turn in $Z$. It follows that 

\[H_{*}(H_{*}(\widetilde{C}_{F}(D), d_{0}^{f}), (d_{1}^{f})^{*}) \cong \widetilde{H}_{H}(D) \oplus \bigoplus_{\substack{Z \text{ nonempty} \\ Z \text{ admissible} }} \widetilde{H}_{H}(D-Z, n(Z)) \]

\noindent
where $n$ is the number of turns made by $Z$.

\section{A Sufficient Condition for Invariance} \label{section3}

Let $C(D)$ denote an abstract cube of resolutions complex. In this section we will give sufficient algebraic conditions on $C(D)$ for the $E_{2}$ and higher pages of the induced cube of resolutions to be invariants.

In order for a homology theory to be an invariant of braids, it needs to be invariant under the braid-like isotopies shown in Figure \ref{Reidemeister}. Instead of the typical 3-crossing Reidemeister III move, we use the move that relates the $6$-crossing diagram with the $0$-crossing diagram - the reason for this choice will become clear below. We will view these braid-like Reidemeister moves as replacing one tangle with another within the closed braid. 

\begin{figure}[h!]
\begin{subfigure}{.5\textwidth}
 \centering
   \begin{overpic}[width=.5\textwidth]{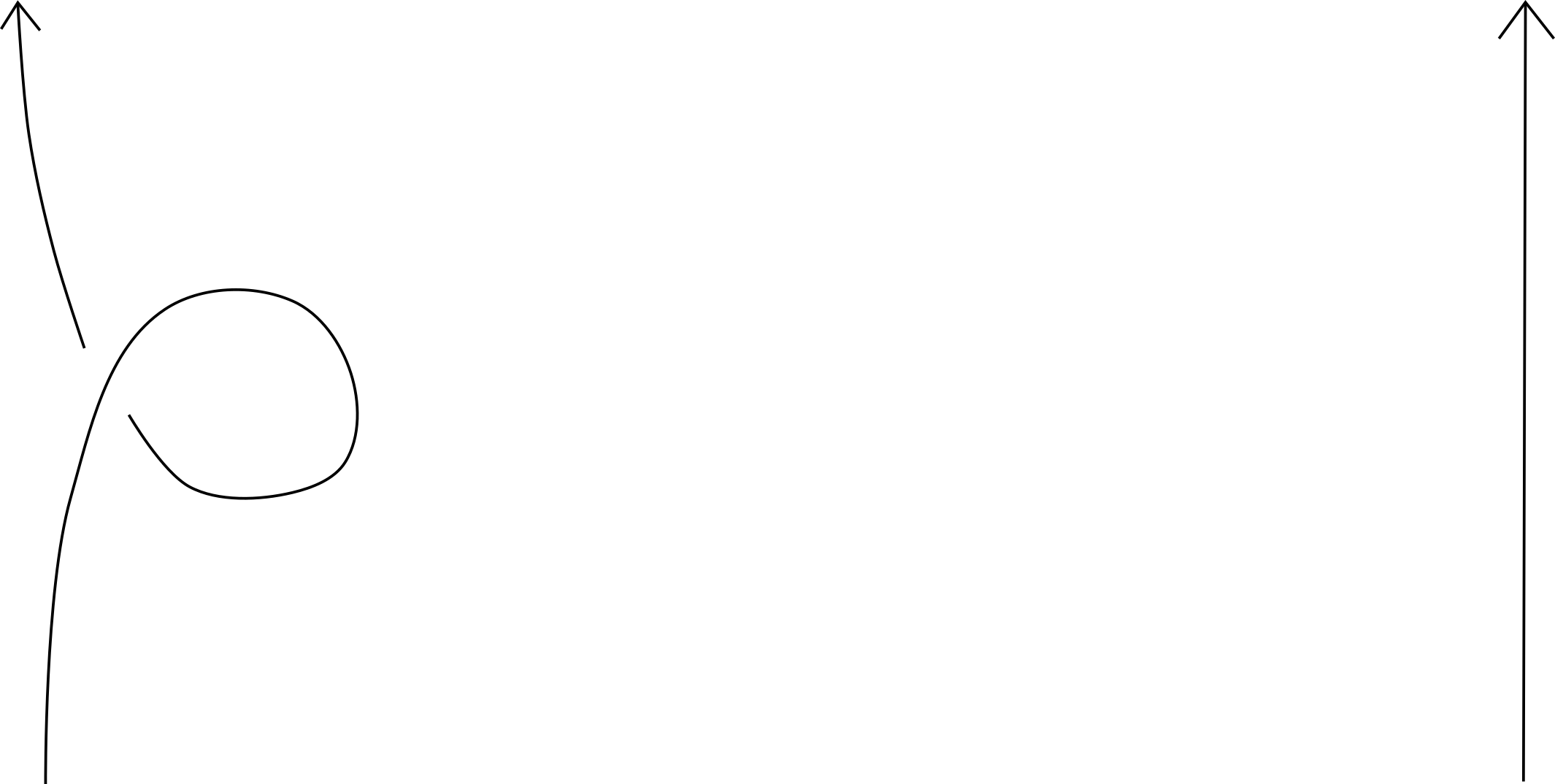}
   \put(44,23){$\xleftrightarrow{\hspace{10mm}}$}
   \put(3,-12){$D^{I+}_{a}$}
   \put(94,-12){$D^{I+}_{b}$}   
   \end{overpic}
\end{subfigure}%
\begin{subfigure}{.5\textwidth}
  \centering
   \begin{overpic}[width=.5\textwidth]{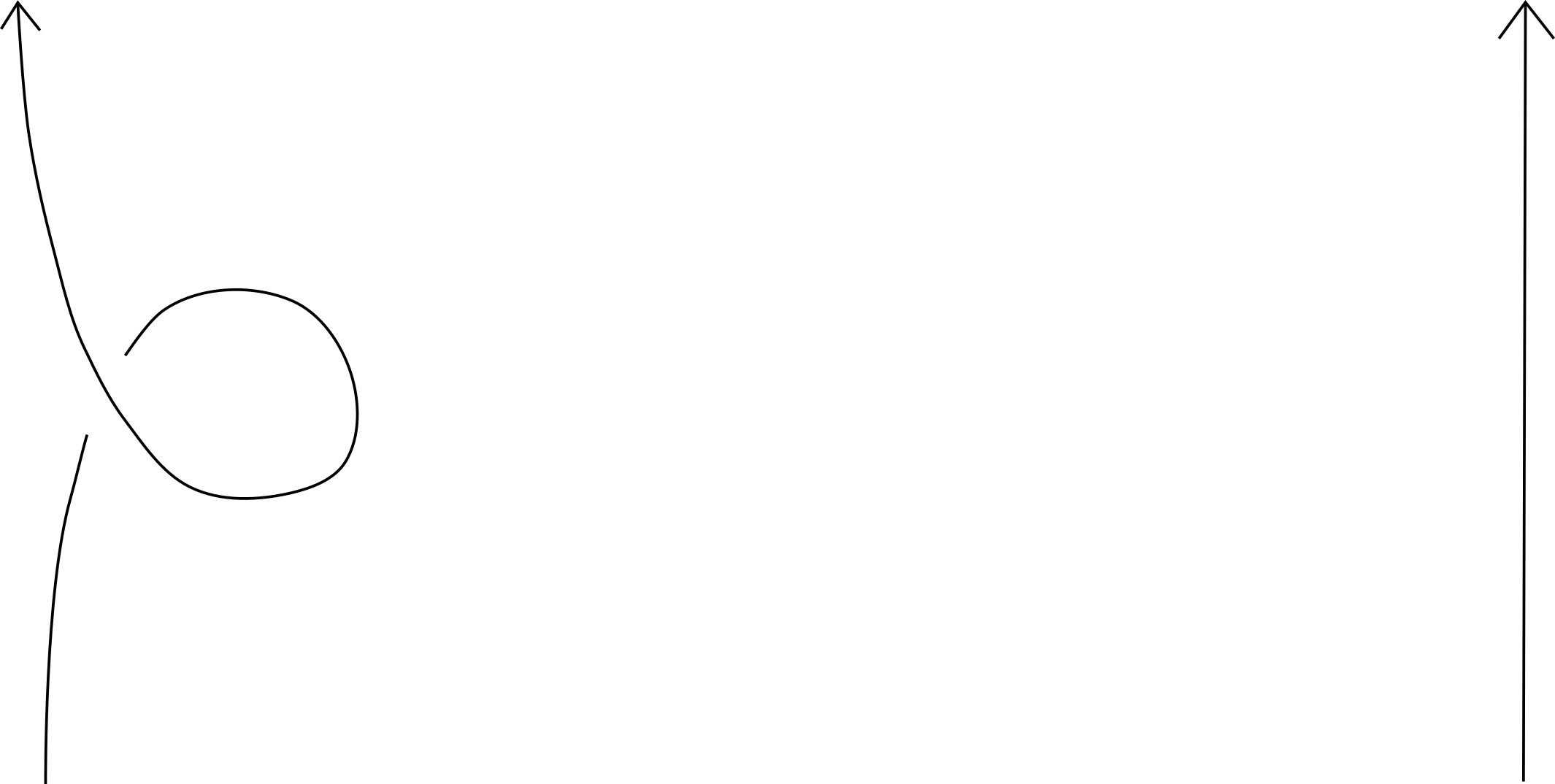}
   \put(45,23){$\xleftrightarrow{\hspace{10mm}}$}
   \put(3,-12){$D^{I-}_{a}$}
   \put(94,-12){$D^{I-}_{b}$}      
   \end{overpic}
\end{subfigure}

\vspace{10mm}

\begin{subfigure}{.5\textwidth}
 \centering
   \begin{overpic}[width=.6\textwidth]{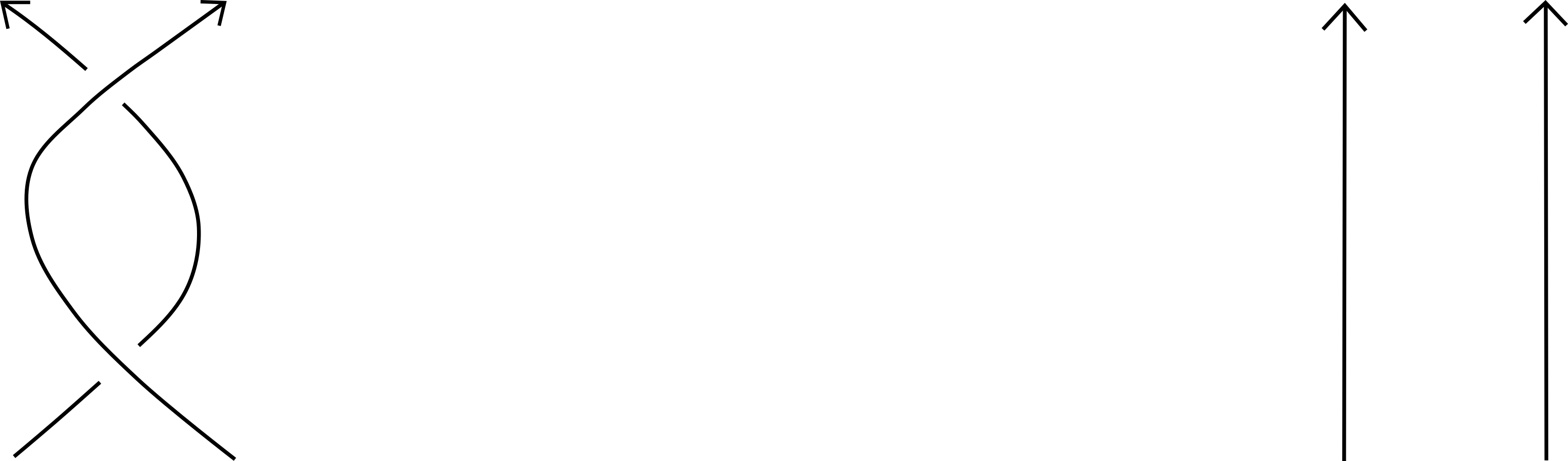}
   \put(38,13){$\xleftrightarrow{\hspace{10mm}}$}
   \put(3,-12){$D^{II+}_{a}$}
   \put(86,-12){$D^{II+}_{b}$}   
   \end{overpic}
  \vspace{10mm}
\end{subfigure}%
\begin{subfigure}{.5\textwidth}
  \centering
  \hspace{3mm}
   \begin{overpic}[width=.6\textwidth]{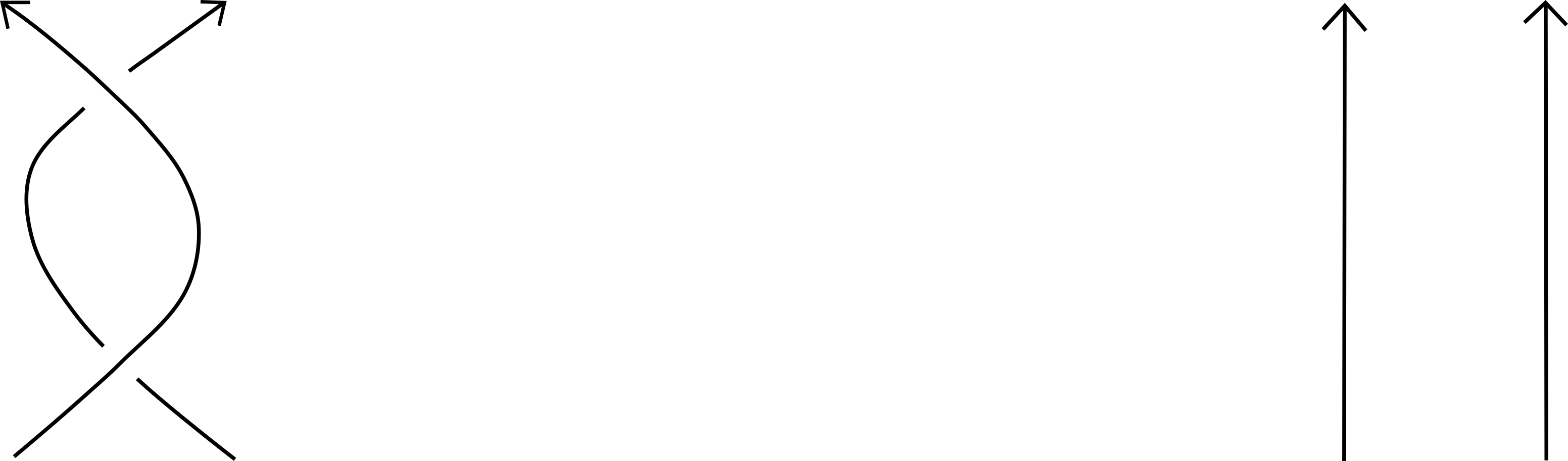}
   \put(38,13){$\xleftrightarrow{\hspace{10mm}}$}
   \put(3,-12){$D^{II-}_{a}$}
   \put(86,-12){$D^{II-}_{b}$}      
   \end{overpic}
  \vspace{10mm}
\end{subfigure}
\begin{subfigure}{1.0 \textwidth}
 \centering
   \begin{overpic}[width=.5\textwidth]{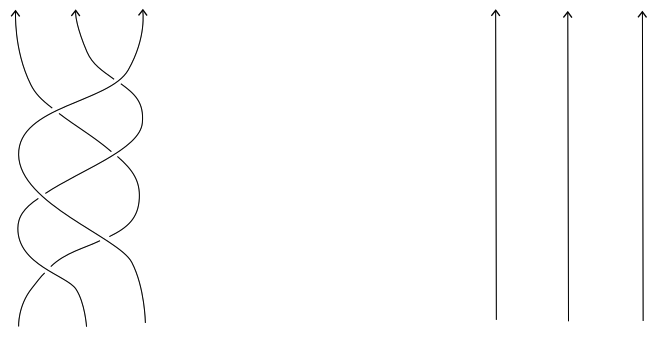}
   \put(40,25){$\xleftrightarrow{\hspace{10mm}}$}
   \put(8,-5){$D^{III}_{a}$}
   \put(83,-5){$D^{III}_{b}$}   
   \end{overpic}
\end{subfigure}
\vspace{5mm}
 \caption{Braidlike Reidemeister Moves} \label{Reidemeister}
\end{figure}

We will refer to the tangles shown in Figure \ref{Reidemeister} as \emph{inner} tangles, and the the complement of these tangles in $D$ as \emph{outer} tangles. For the Type I moves, the inner and outer tangles are (1,1) tangles, for the Type II moves the inner and outer tangles are (2,2) tangles, and for the Type III move the inner and outer tangles are (3,3) tangles.

Let $D_{a}$ and $D_{b}$ be two braid diagrams related by one of the braid-like Reidemeister moves such that $D_{a}$ has the type (a) inner tangle and $D_{b}$ has the type (b) inner tangle. We will denote these inner tangles by $T_{a}$ and $T_{b}$, respectively, with the outer tangle denoted $T_{out}$.

Since the cube grading is given by the sum over all crossings of the grading from that crossing, we can decompose the cube grading into the inner cube grading (the sum over all crossings in the inner diagram) and the outer cube grading. We will denote these by $gr_{in}$ and $gr_{out}$, respectively. Note that $gr_{in}+gr_{out} = gr_{cube}$. Let $\mathcal{F}_{in}$, $\mathcal{F}_{out}$, and $\mathcal{F}_{cube}$ be the filtrations induced by these gradings.

Let $d_{ij}$ denote the differentials on $D$ which increase the outer cube grading by $i$ and the inner cube grading by $j$

\begin{thm} \label{thm3.1}

Suppose $C(D)$ satisfies the following properties:

\begin{enumerate}[label=(\alph*)] 
\item With respect to the filtration $\mathcal{F}_{out}$, the complexes $C(D_{a})$ and $C(D_{b})$ are filtered chain homotopy equivalent.
\item The homology $H_{*}(H_{*}(C(D_{a}), d_{00}), d_{01}^{*})$ lies in a single inner grading $gr_{in}=c$.
\end{enumerate}

\noindent
Then the $E_{k}(C(D_{a}))\{-c\} \cong E_{k}(C(D_{b}))$ for all $k \ge 2$, where the isomorphism is graded with grading $gr_{cube}$ and $\{n\}$ refers to a shift by $n$ in the cube grading.

\end{thm}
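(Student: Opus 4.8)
The plan is to exploit the two hypotheses in sequence: hypothesis (a) lets us compare the two complexes after taking homology of the vertex/inner-edge differentials, and hypothesis (b) tells us that this comparison respects the cube grading up to the shift $\{-c\}$. First I would set up the iterated spectral sequence associated to the pair of commuting filtrations $\mathcal{F}_{out}$ and $\mathcal{F}_{in}$, whose combined grading is $gr_{cube}$. Starting from a complex $C(D_a)$ with differential $d = \sum_{i,j} d_{ij}$, I take homology first with respect to $d_{00}$ and then with respect to the induced map $d_{01}^*$, obtaining $H_* = H_*(H_*(C(D_a), d_{00}), d_{01}^*)$. By hypothesis (b), this bigraded object is concentrated in inner grading $gr_{in} = c$, so that on $H_*$ the outer grading and the total cube grading agree up to the constant shift by $c$.

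Next I would argue that the filtered chain homotopy equivalence of hypothesis (a) induces an isomorphism on the pages of the outer spectral sequence, and that because $H_*$ lives in a single inner grading, the remaining part of the cube filtration contributes nothing new: the spectral sequence for $\mathcal{F}_{cube}$ on $C(D_a)$ can be computed by first running the $\mathcal{F}_{out}$ spectral sequence through the $d_{00}$ and $d_{01}$ differentials — which is exactly what hypothesis (b) describes — and then the residual $\mathcal{F}_{in}$ filtration on the resulting page is trivial (supported on a single inner degree), so higher inner differentials vanish and the outer and cube filtrations give identical pages from $E_2$ onward. The same analysis applied to $C(D_b)$, combined with the filtered homotopy equivalence from (a) identifying the relevant outer-filtered pages of $C(D_a)$ and $C(D_b)$, yields $E_k(C(D_a)) \cong E_k(C(D_b))$ for $k \ge 2$; tracking the single inner grading $c$ through the identification $gr_{cube} = gr_{out} + gr_{in}$ produces the shift $\{-c\}$.

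Concretely the key steps are: (1) formalize the two-step (inner-then-outer, or equivalently a refinement of the cube) spectral sequence and record how the differentials $d_{ij}$ distribute across pages; (2) use hypothesis (b) to show that after passing to $H_*(H_*(-, d_{00}), d_{01}^*)$ all further structure is confined to $gr_{in} = c$, so that $E_r$ of $\mathcal{F}_{cube}$ and of $\mathcal{F}_{out}$ coincide for $r \ge 2$ up to the grading shift; (3) invoke hypothesis (a) to identify these $\mathcal{F}_{out}$-pages for $D_a$ and $D_b$; (4) assemble these into the stated graded isomorphism $E_k(C(D_a))\{-c\} \cong E_k(C(D_b))$.

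The main obstacle I anticipate is step (2): making precise the claim that "concentration in a single inner grading on the $E_1^{in}$-then-$d_{01}^*$ page forces the $\mathcal{F}_{cube}$ and $\mathcal{F}_{out}$ spectral sequences to agree." One must be careful that a filtered chain homotopy equivalence with respect to $\mathcal{F}_{out}$ need not respect $\mathcal{F}_{in}$, so the homotopy equivalence only controls the outer spectral sequence; the genuine content is that hypothesis (b) kills the discrepancy between the outer and cube filtrations after page one. I would handle this by working at the level of the multicomplex: filter $C(D_a)$ by $gr_{out}$, compute the associated graded, and observe that hypothesis (b) says the $d_{01}^*$-homology of the $d_{00}$-homology is a single-inner-degree object, so the induced differentials $d_{ij}^*$ with $i \ge 1$ on later pages are exactly the higher cube differentials, with no room for nontrivial interaction with the inner filtration beyond the constant shift by $c$.
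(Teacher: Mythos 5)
Your argument for $C(D_a)$ is essentially the paper's: filter $d^{out}_{0} = d_{00}+d_{01}+\cdots$ by the inner grading, use hypothesis (b) to collapse that spectral sequence so that $E_{1}(C(D_a),\mathcal{F}_{out}) \cong H_{*}(H_{*}(C(D_a),d_{00}),d_{01}^{*})$, observe that concentration in a single inner degree forces $d_{ij}^{*}=0$ for $j \neq 0$, and conclude that $E_{2}(C(D_a),\mathcal{F}_{out})$ and $E_{2}(C(D_a),\mathcal{F}_{cube})$ are both computed as $H_{*}(H_{*}(H_{*}(C(D_a),d_{00}),d_{01}^{*}),d_{10}^{*})$, after which the two filtrations coincide and the higher pages agree; your tracking of the shift $\{-c\}$ is also the paper's.

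The gap is in the last link of your chain, namely $E_{k}(C(D_b),\mathcal{F}_{out}) \cong E_{k}(C(D_b),\mathcal{F}_{cube})$. You justify it by ``the same analysis applied to $C(D_b)$,'' but the concentration hypothesis (b) is assumed only for $D_a$; nothing in the hypotheses gives an analogous statement for $D_b$, and hypothesis (a) by itself controls only the $\mathcal{F}_{out}$-spectral sequences. What makes this step work --- and what the paper uses as the very first line of its proof --- is a fact about the setup that you never invoke: the type (b) inner tangles contain no crossings (this is precisely why the Reidemeister III move is taken in its $6$-crossing-to-$0$-crossing form). Consequently $gr_{in} \equiv 0$ on $C(D_b)$, so $\mathcal{F}_{out}$ and $\mathcal{F}_{cube}$ are literally the same filtration on $C(D_b)$, and combining with (a) gives $(C(D_a),\mathcal{F}_{out}) \simeq (C(D_b),\mathcal{F}_{cube})$, reducing the theorem to the comparison of the two filtrations on $C(D_a)$ that you carried out. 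With that observation added (and noting that, since the $D_b$ side sits in inner grading $0$, the shift $\{-c\}$ arises only from the $D_a$ side, as you computed), your proof is complete and coincides with the paper's.
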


\begin{proof}

Since the tangle $T_{b}$ does not contain any crossings, we have that 

\[ (C(D_{b}), \mathcal{F}_{cube}) \cong (C(D_{b}), \mathcal{F}_{out}) \]

\noindent
as filtered complexes. Together with (a), this shows that

\[ (C(D_{a}), \mathcal{F}_{out}) \cong (C(D_{b}), \mathcal{F}_{cube}) \]

\noindent
Thus, it suffices to show that 

\[ E_{k}(C(D_{a}), \mathcal{F}_{cube})\{-c\} \cong E_{k}(C(D_{a}), \mathcal{F}_{out}) \]

\noindent
for $k \ge 2$, where the spectral sequences are induced by the respective filtrations. We are therefore interested in comparing these two filtrations on the complex $C(D_{a})$.

Let $d_{k}^{out}$ denote the differentials which increase $gr_{out}$ by $k$, and $d_{k}^{cube}$ the differentials which increase $gr_{cube}$ by $k$. Then

\[ d_{k}^{out} = \sum_{j} d_{kj}, \hspace{20mm} d_{k}^{cube} = \sum_{i+j=k} d_{ij} \]

The $E_{1}$ page relative to the $\mathcal{F}_{out}$ filtration is given by

\[ E_{1}(C(D_{a}), \mathcal{F}_{out}) = H_{*}(C(D_{a}), d^{out}_{0}) = H_{*}(C(D_{a}), d_{00}+d_{01}+...+d_{0n}) \]

\noindent
where $n$ is the number of crossings in $T_{a}$. If we filter the $d^{out}_{0})$ differential by the inner filtration $\mathcal{F}_{in}$, then this homology will be the $E_{\infty}$ page of the induced spectral sequence. However, using assumption (b), this spectral sequence collapses at the $E_{2}$ page because it is in a single inner cube grading.

\[ E_{1}(C(D_{a}), \mathcal{F}_{out}) \cong H_{*}(H_{*}(C(D_{a}), d_{00}), d_{01}^{*}) \]

\noindent
The induced $d_{1}^{out}$ map is given by 

\[ (d_{1}^{out})^{*} = d_{10}^{*}+d_{11}^{*}+...+d_{1n}^{*}  \]

\noindent
but since the homology is in a single inner cube grading, $d_{ij}^{*}=0$ for $j \ne 0$. Thus, $(d_{1}^{out})^{*} = d_{10}^{*} $ and

\[ E_{2}(C(D_{a}), \mathcal{F}_{out}) \cong H_{*}(H_{*}(H_{*}(C(D_{a}), d_{00}), d_{01}^{*}), d_{10}^{*})\]

The $E_{1}$ page relative to the $\mathcal{F}_{cube}$ filtration is given by

\[ E_{1}(C(D_{a}), \mathcal{F}_{out}) = H_{*}(C(D_{a}), d^{cube}_{0}) = H_{*}(C(D_{a}), d_{00}) \]

\noindent
The induced $d_{1}^{cube}$ map is given by 

\[ (d_{1}^{cube})^{*} = d_{01}^{*}+d_{10}^{*}  \]

\noindent
By filtering this differential with the outer cube grading, we can first take homology with respect to $d_{01}^{*}$, giving

\[H_{*}(H_{*}(C(D_{a}), d_{00}), d_{01}^{*})\]

\noindent
This homology lies in a single inner cube grading, so the spectral sequence induced by the the outer cube filtration on $(d_{1}^{cube})^{*}$ collapses at the next page:

\[ H_{*}(H_{*}(C(D_{a}, d_{00}), d^{*}_{01}+d^{*}_{10}) \cong H_{*}(H_{*}(H_{*}(C(D_{a}, d_{00}), d^{*}_{01}), d^{*}_{10}) \]

\noindent
proving that

\[ E_{2}(C(D_{a}), \mathcal{F}_{cube}) \cong H_{*}(H_{*}(H_{*}(C(D_{a}), d_{00}), d_{01}^{*}), d_{10}^{*})  \cong E_{2}(C(D_{a}), \mathcal{F}_{out})  \]

This proves the isomorphism of the $E_{2}$ pages - to make the isomorphism graded, we observe that $E_{2}(C(D_{a}), \mathcal{F}_{cube})$ lies in inner cube grading $c$, so the isomorphism is homogeneous of grading $-c$. Thus, adding in the grading shift, we see that 

\[ E_{2}(C(D_{a}), \mathcal{F}_{cube})\{-c\} \cong E_{2}(C(D_{a}), \mathcal{F}_{out}) \]

To see that the higher pages of the spectral sequence are isomorphic as well, we note that the $E_{2}$ pages of the two complexes were computed by taking homology with respect to the same differentials. It follows that the induced higher differentials are the same, and since the local cube grading has collapsed, the filtrations $\mathcal{F}_{cube}$ and $\mathcal{F}_{out}$ are now the same. Since the filtered complexes are isomorphic, all higher pages in the spectral sequence are isomorphic as well.

\end{proof}

One can show that the HOMFLY-PT complex $C_{H}(D)$ satisfies the conditions of these theorems. This gives an alternative proof that HOMFLY-PT homology, the $E_{2}$ page of the spectral sequence induced by the cube filtration, is an invariant of braids. The higher pages of the spectral sequence are Rasmussen's $E_{k}(-1)$ spectral sequence.

\section{The Proof of Invariance}

Let $\widetilde{C}_{F}(D)$ denote the triply graded complex with triple grading $(M,A,gr_{cube})$. We define $\widetilde{C}_{F}(D)\{k\}$ to be the complex in which the cube grading has an overall shift of $k$. In this section we will prove the main theorem:

\begin{thm}

Let $E_{k}(\widetilde{C}_{F}(D))\{\frac{1}{2}(-c(D)-b(D))\}$ denote the spectral sequence induced by the cube filtration. Then $E_{2}(\widetilde{C}_{F}(D))\{\frac{1}{2}(-c(D)-b(D))\}$ is a triply graded link invariant which categorifies the HOMFLY-PT polynomial, and the higher pages are link invariants.

\end{thm}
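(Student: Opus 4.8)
The plan is to verify that the knot Floer cube of resolutions complex $\widetilde{C}_F(D)$ satisfies the two hypotheses of Theorem \ref{thm3.1} for each of the braid-like Reidemeister moves in Figure \ref{Reidemeister}, and then assemble these local invariance statements together with the Euler characteristic computation of Section \ref{Euler} to obtain the full theorem. Concretely, for each move I would fix the inner tangle $T_a$ (the side with crossings) and the outer tangle $T_{out}$, and check: (a) that $(\widetilde{C}_F(D_a), \mathcal{F}_{out})$ and $(\widetilde{C}_F(D_b), \mathcal{F}_{out})$ are filtered chain homotopy equivalent, and (b) that $H_*(H_*(\widetilde{C}_F(D_a), d_{00}), d_{01}^*)$ is concentrated in a single inner cube grading $gr_{in}=c$. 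Granting both, Theorem \ref{thm3.1} gives $E_k(\widetilde{C}_F(D_a))\{-c\} \cong E_k(\widetilde{C}_F(D_b))$ for all $k \ge 2$, and tracking how $c$ depends on the move shows that the overall shift $\{\tfrac12(-c(D)-b(D))\}$ exactly compensates, so that the normalized spectral sequence is genuinely move-independent. Since braid closures of isotopic links differ by a sequence of these moves (plus the observation that the closure operation and marked-edge dependence are already handled by $\widetilde{C}_F$), this yields link invariance of $E_2$ and all higher pages.

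For hypothesis (a), the natural route is to use the fact that $\widetilde{C}_F(D)$ is chain homotopy equivalent to $\mathit{CFK}^-$ of the underlying link once we take the vertical filtration (as recalled in Section 2 for the single-crossing complexes), and that the equivalences can be arranged to respect the outer cube filtration because the homotopies involved are supported in the inner tangle. Alternatively, and perhaps more in the spirit of the local framework, one can compute both $H_*(\widetilde{C}_F(T_a), d_{00})$ and $H_*(\widetilde{C}_F(T_b), d_{00})$ as bimodules over the algebra attached to the tangle endpoints using the cycle/generator dictionary of Figure \ref{table} and the filtered edge-map computations of Figure \ref{table2}, show these are filtered-homotopy-equivalent bimodules, and glue with $T_{out}$. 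For hypothesis (b), the key input is Lemma \ref{bigthm} together with Theorems \ref{thm2.4} and \ref{thm2.5}: these identify $H_*(\widetilde{C}_F(S), d_0)$ with a direct sum of HOMFLY-PT homologies of subdiagrams $S-Z$, and the displayed decomposition at the end of Section \ref{filterededgemaps} shows that after taking the induced $d_1^f$ homology, all the non-empty admissible cycle contributions reduce to $\widetilde{H}_H(D-Z, n(Z))$ terms which — being HOMFLY-PT homologies of honest diagrams with all crossings resolved in a fixed way — sit in one inner cube grading, while the empty-cycle term is $\widetilde{H}_H(D)$, also in a fixed inner grading. One must check the inner gradings all coincide for the specific inner tangles appearing in the moves; this is a finite combinatorial check.

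Once invariance of all pages is established, the categorification-of-HOMFLY-PT statement follows from Section \ref{Euler}: with the triple grading $(-gr_q+gr_h, -gr_h, gr_v)$ the graded Euler characteristic of $E_2(\widetilde{C}_F(D))$ equals $P_H(a,q,D)$, and the edge maps being homogeneous of degree $(0,0,2)$ makes the alternating sum well-defined. That $E_\infty$ is knot Floer homology is immediate from the construction, since $\widetilde{C}_F(D) \simeq \mathit{CFK}^-$ via the vertical filtration and the cube filtration's spectral sequence converges to the homology of the total complex. Assembling: $E_2$ is a triply-graded invariant categorifying $P_H$, the higher $E_k$ are invariants, and $E_\infty \cong \widehat{\mathit{HFK}}$-type (minus) knot Floer homology.

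I expect the main obstacle to be hypothesis (a) for the Type III move, where the inner tangle has six crossings and the complex is large; verifying filtered chain homotopy equivalence with respect to $\mathcal{F}_{out}$ without an explicit canonical isomorphism (which \cite{Me2} did not provide) is delicate. The workaround is to avoid building the equivalence directly: instead, observe that both $D_a^{III}$ and $D_b^{III}$ present the same link, invoke the chain-homotopy invariance of $\mathit{CFK}^-$ to get \emph{some} equivalence $\widetilde{C}_F(D_a^{III}) \simeq \widetilde{C}_F(D_b^{III})$, and then argue that this equivalence can be upgraded to a \emph{filtered} one with respect to $\mathcal{F}_{out}$ because the discrepancy lives entirely in the six-crossing inner region — more precisely, because the outer tangle is unchanged and the gluing is along the tangle algebra, the equivalence can be chosen to be the identity on the outer factor. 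Making this last point rigorous, likely via a gluing/pairing argument for the tangle bimodules, is the technical heart of the proof.
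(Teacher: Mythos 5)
Your high-level plan (verify hypotheses (a) and (b) of Theorem \ref{thm3.1} for each braidlike move, track the grading shift $\tfrac12(-c(D)-b(D))$, and quote Section \ref{Euler} for the Euler characteristic) matches the paper, but both of your proposed ways of discharging hypothesis (a) have a genuine gap, and it is exactly the technical heart you flag. The cube complex $\widetilde{C}_{F}(D)$ is not determined by $\mathit{CFK}^{-}(D)$: it is built from a specific Heegaard diagram in which the crossing regions are rigid, and a chain homotopy equivalence of the total complexes (which is all that $\mathit{CFK}^{-}$-invariance gives you) carries no information about the outer cube filtration. There is no tangle-bimodule gluing theory for this cube of resolutions that you can invoke to ``choose the equivalence to be the identity on the outer factor,'' so the claim that a generic equivalence can be upgraded to an $\mathcal{F}_{out}$-filtered one is unsupported. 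The paper instead proves (a) constructively: Lemma \ref{lem4.5} shows the Heegaard diagrams for $D_a$ and $D_b$ are connected by isotopies, handleslides, and insertions supported in the ball and away from the rigid crossing pieces; Lemma \ref{insertionlemma} handles the $(0,3)$ stabilizations; and Lemma \ref{heegaardmoves} shows each such move induces a filtered chain map on the whole cube --- this last step is nontrivial, requiring a new triangle-counting component $f_{A^{+}B}$ and a curved-complex/Alexander-filtration argument to verify the mixed square commutes, before Lemma \ref{homalglemma} upgrades it to a filtered equivalence. None of this is replaced by your appeal to ``homotopies supported in the inner tangle.''

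Your treatment of hypothesis (b) is also too optimistic. It is not true that each admissible cycle's contribution automatically sits in a single inner cube grading, and the issue is not ``a finite combinatorial check'' of gradings: for several connectivities the $d_{00}$-homology is nonzero in several inner gradings, and one must show the extra pieces cancel after taking the induced $d_{01}^{*}$ (or $d_{10}^{*}$) homology. In the paper this requires explicit holomorphic-disc computations (e.g.\ the $d^{f}_{01}$ maps in the Reidemeister I Case 1 complex), new edge maps between \emph{different} cycles pinned down by $d^{2}=0$ (Reidemeister II Case 4), Rasmussen's injectivity statement (Lemma \ref{injective}), and the dimension-count Lemma \ref{easyhomlemma} applied Alexander grading by Alexander grading for the Reidemeister III cases; the empty-cycle case rests on the known invariance of HOMFLY-PT homology rather than on any grading bookkeeping. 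Your stated reason --- that the terms $\widetilde{H}_{H}(D-Z,n(Z))$ are homologies of diagrams ``with all crossings resolved in a fixed way'' --- conflates the global decomposition of $H_{*}(H_{*}(\widetilde{C}_{F}(D),d_{0}^{f}),(d_{1}^{f})^{*})$ with the local statement needed for (b), which concerns partial resolutions where only the inner crossings are unresolved. So while your inputs (Lemma \ref{bigthm}, Theorems \ref{thm2.4} and \ref{thm2.5}, Figure \ref{table2}) are the right ones, the argument as written would not close either hypothesis of Theorem \ref{thm3.1}.
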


We will prove this by showing that $\widetilde{C}_{F}(D)$ satisfies the conditions of Theorem \ref{thm3.1}. Condition (a) will follow an argument extending the triangle maps corresponding to isotopies and handleslides to the cube complex, while (b) will involve direct computation. We will complete the proof by showing that $\frac{1}{2}(-c(D)-b(D))$ is the proper grading shift to make the cube grading into an absolute grading (this is the same grading shift needed for HOMFLY-PT homology).

We need a lemma from homological algebra:

\begin{lem}
\label{homalglemma}

Suppose $F$ is a filtered chain map between filtered complexes $C$ and $C'$. If $F$ induces an isomorphism on the homologies of the associated graded objects of $C$ and $C'$, then $F$ is a filtered chain homotopy equivalence.

\end{lem}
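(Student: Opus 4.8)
The plan is to prove Lemma \ref{homalglemma} by the standard mapping-cone/spectral-sequence argument from filtered homological algebra. First I would form the mapping cone $\operatorname{Cone}(F)$, which inherits a filtration from those on $C$ and $C'$ (shifting the filtration on $C$ by one, as usual, so that the cone differential is filtered). The associated graded complex of $\operatorname{Cone}(F)$ is then the mapping cone of the associated graded map $\operatorname{gr}(F)\colon \operatorname{gr}(C)\to\operatorname{gr}(C')$. By hypothesis $\operatorname{gr}(F)$ is a quasi-isomorphism, so $\operatorname{gr}(\operatorname{Cone}(F))$ is acyclic. The goal is to upgrade this to: $\operatorname{Cone}(F)$ is filtered chain homotopy equivalent to the zero complex, which is equivalent to saying $F$ is a filtered chain homotopy equivalence.

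The key technical point, which I expect to be the main obstacle, is a boundedness/completeness issue: one needs the filtrations to be bounded (or at least bounded below and exhaustive, or complete) so that the spectral sequence of the filtered complex $\operatorname{Cone}(F)$ converges and acyclicity of the $E_1$ (or $E_0$) page forces acyclicity of the total complex, and moreover so that one can build the filtered contracting homotopy by inducting on the filtration level. In our applications the complexes are finitely generated over $R=\Z_2[U_1,\dots,U_n]$ with filtrations bounded by the cube grading, so this hypothesis is automatically satisfied and I would either state it explicitly or note that it holds in all cases of interest; I would then cite a standard reference for the purely homological statement.

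Granting boundedness, the argument runs as follows. One shows directly that a bounded-filtered complex whose associated graded complex is acyclic is itself filtered contractible: choose a contracting homotopy $H_0$ on the lowest filtration level, extend it level by level using acyclicity of each graded piece together with the fact that the ``error term'' $\mathrm{id}-dH-Hd$ on each stage lands in a strictly lower filtration level, and take the (finite, by boundedness) sum to get a genuine filtered chain homotopy $H$ with $dH+Hd=\mathrm{id}$ on $\operatorname{Cone}(F)$. Equivalently, one can phrase this via the spectral sequence: since $E_1(\operatorname{Cone}(F))=H_*(\operatorname{gr}(\operatorname{Cone}(F)))=0$ and the spectral sequence converges (boundedness), $H_*(\operatorname{Cone}(F))=0$, and then a standard lemma promotes a filtered chain map with acyclic cone to a filtered homotopy equivalence.

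Finally I would translate back: a null-homotopy of $\operatorname{Cone}(F)$ is precisely the data of a filtered homotopy inverse $G\colon C'\to C$ to $F$ together with filtered null-homotopies of $\mathrm{id}_C-GF$ and $\mathrm{id}_{C'}-FG$, so $F$ is a filtered chain homotopy equivalence, completing the proof. The only place real care is needed is making sure every map constructed (the homotopy inverse and the two homotopies) is filtration-preserving, which is exactly what the level-by-level induction guarantees; everything else is bookkeeping.
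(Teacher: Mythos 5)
The paper does not actually prove Lemma \ref{homalglemma}; it is invoked as a standard fact from homological algebra, so the only thing to check is whether your argument correctly supplies the standard proof, and it does. Passing to $\operatorname{Cone}(F)$, identifying its associated graded with $\operatorname{Cone}(\operatorname{gr}(F))$ (acyclic since $\operatorname{gr}(F)$ is a quasi-isomorphism), building a filtered contracting homotopy by induction on filtration level, and then unpacking the contraction into a filtered homotopy inverse and filtered homotopies is exactly the usual route; you also correctly isolate the one genuine hypothesis the bare statement omits, namely boundedness (or bounded-below and exhaustive) of the filtrations, which the paper leaves implicit but which holds automatically in all of its applications because the cube filtration has only finitely many levels. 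One small bookkeeping remark: since $F$ is assumed filtered, the cone differential already preserves the unshifted filtration $\mathcal{F}_p\operatorname{Cone}(F)=\mathcal{F}_p C[1]\oplus\mathcal{F}_p C'$, and it is with this unshifted filtration that $\operatorname{gr}(\operatorname{Cone}(F))\cong\operatorname{Cone}(\operatorname{gr}(F))$; if instead you shift the filtration on $C$ by one, the $F$-component vanishes in the associated graded and one argues via a two-column spectral sequence. Either convention works, but your parenthetical about shifting and your identification of the associated graded are using different ones, so pick one and stay with it; this is a cosmetic fix, not a gap.
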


\begin{lem} \label{heegaardmoves}

If $\mathcal{H}_{1}$ and $\mathcal{H}_{2}$ are two Heegaard diagrams containing the crossing diagram for the oriented cube of resolutions that differ by a handleslide or isotopy away the $\alpha$ and $\beta$ circles involved in the crossing, then there is a filtered chain homotopy equivalence from the cube of resolutions corresponding to $\mathcal{H}_{1}$ to the cube of resolutions corresponding to $\mathcal{H}_{2}$. 
\end{lem}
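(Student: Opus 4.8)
The plan is to lift the standard holomorphic-triangle maps attached to isotopies and handleslides from the knot Floer complex to the cube complex $C_F$, check that the lifted map preserves the cube filtration and is a quasi-isomorphism on each associated-graded summand, and then invoke Lemma \ref{homalglemma}. First I would reduce to a single elementary move: an isotopy of one curve, or a handleslide of one $\alpha$-curve (or $\beta$-curve) over another, supported in an embedded disk $R \subset \Sigma$ that is disjoint from the curves $\alpha_1,\alpha_2,\beta_1,\beta_2$, from the basepoints $A_0, A^{\pm}, B$, and from the distinguished intersection points $x, x'$ of Figure \ref{markings} at every crossing, as well as from the stabilization region of Figure \ref{HDMarkedEdge} at every insertion. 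Since a Heegaard move does not change the underlying braid diagram $D$, the cube filtration $\mathcal{F}_{cube}$ is defined on both sides, and this is the filtration for which we produce the equivalence.

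Next I would form the triangle map $\Psi \colon \mathit{CFK}^-(\mathcal{H}_1) \to \mathit{CFK}^-(\mathcal{H}_2)$: for a handleslide this is $F_{\alpha\alpha'\beta}(\,\cdot\,\otimes\Theta)$ associated to the Heegaard triple $(\Sigma,\alpha,\alpha',\beta)$ with $\Theta$ the top generator, and for an isotopy it is the continuation map. The content beyond the usual Heegaard Floer package is that, because $R$ meets none of the basepoints $A_0, A^{\pm}, B$ nor the arcs carrying $x, x'$, every triangle contributing to $\Psi$ has the same multiplicity at $A_0$, at $A^{\pm}$, and at the $B$'s on its incoming and outgoing corners, and the same occupancy of $x$ (resp.\ $x'$). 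Hence $\Psi$ is $R$-linear (it commutes with the $U_i$), and it refines to a chain map compatible with the decomposition of the differential into the pieces $1$, $U_1+\cdots+U_4$, $\Phi_{A^{\pm}}$, $\Phi_{A^{\pm}B}$, $\Phi_B$ used to build $C_F(\mathcal{H})$ in Section 2; equivalently, $\Psi$ is \emph{filtered} for the filtrations by $B$-multiplicity and by the special intersection point. Because the cube complex is assembled from these decompositions, this refined $\Psi$ induces a map $\widetilde\Psi \colon C_F(\mathcal{H}_1) \to C_F(\mathcal{H}_2)$, applying the induced maps $\Psi_X \colon X \to X'$, $\Psi_Y \colon Y \to Y'$ (and the analogous maps at insertions) on the vertex groups, together with the lower-order terms supplied by the filtered structure; the identity $\widetilde\Psi\,\partial_1 = \partial_2\,\widetilde\Psi$ follows from $\Psi\,\partial_1 = \partial_2\,\Psi$ together with this compatibility, and $\widetilde\Psi$ preserves $\mathcal{F}_{cube}$ because the rungs of each crossing-cube that raise the cube grading are exactly the $\Phi$-pieces that $\Psi$ respects, while the $1$-rungs and $(U_1+\cdots+U_4)$-rungs are internal to the vertex groups.

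To finish, I would identify the summand of the associated graded of $(C_F(\mathcal{H}_i),\mathcal{F}_{cube})$ at a vertex $v$ with the Floer complex $C_F(S_v)$ of the fully singular diagram $S_v$, computed from $\mathcal{H}_i$, and observe that the vertex component of $\widetilde\Psi$ there is the ordinary triangle map for the move restricted to that diagram, hence a chain homotopy equivalence. Thus $\widetilde\Psi$ is an isomorphism on the homology of the associated graded, and Lemma \ref{homalglemma} shows $\widetilde\Psi$ is a filtered chain homotopy equivalence with respect to $\mathcal{F}_{cube}$.

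I expect the refinement step to be the main obstacle: one must verify that the triangle count can be organized to respect all of the basepoint-multiplicity filtrations at once and then assembles into an honest chain map on the enlarged complex $C_F$ --- in particular, that the necessary lower-order correction terms exist and that the local relations among the maps $\Phi_{A^{\pm}}, \Phi_{A^{\pm}B}, \Phi_B$ (the analogues of $\Phi_{A^-}\Phi_B = U_1+\cdots+U_4$) are preserved by the move. This amounts to rerunning the Maslov-index-$2$ degeneration arguments of \cite{Szabo}, in Manolescu's untwisted setting \cite{Manolescu}, while tracking the extra basepoints, all localized in the complement of $R$.
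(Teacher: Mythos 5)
Your overall skeleton is the same as the paper's: lift the holomorphic-triangle maps to the cube complex, observe the lift is filtered with respect to the cube filtration, check that its vertex components are the standard handleslide/isotopy equivalences for the fully resolved diagrams, and conclude via Lemma \ref{homalglemma}. The gap is in the middle step, which is where the actual content of Lemma \ref{heegaardmoves} lies. Your justification for the lift --- that because the move is supported in a disk $R$ disjoint from the crossing regions, ``every triangle contributing to $\Psi$ has the same multiplicity at $A_{0}$, $A^{\pm}$, and the $B$'s \ldots and the same occupancy of $x$'' --- is not true: triangle domains are global objects on the Heegaard surface and are not confined to $R$, so the triangle map genuinely contains components with nonzero multiplicity at the crossing basepoints and components that change whether the generator contains $x$ or $x'$. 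These are exactly the terms $f_{B}$, $f_{A^{+}}$, and $f_{A^{+}B}$ appearing in the paper's Figure \ref{f}; if your localization claim held, those diagonal entries would all vanish and the lift would be block-diagonal, which it is not. You partially hedge by allowing ``lower-order correction terms,'' but then the assertion that the assembled map squares to a chain map is precisely the identity (\ref{Adiscs}),
\[
 f_{A^{+}B} \circ d_{X_{1}} + d_{X_{2}} \circ f_{A^{+}B} + f_{A^{+}} \circ \Phi_{B} + \Phi_{A^{+}} \circ f_{B} + \Phi_{A^{+}B} \circ f_{0}+f_{0} \circ \Phi_{A^{+}B} =0,
\]
and your proposal leaves it unproved, deferring to ``rerunning the Maslov-index-$2$ degeneration arguments'' (for a chain-map identity one would in any case be degenerating index-one families of triangles, not index-two discs).

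The paper closes exactly this gap without any new holomorphic counting: modify both knot Floer complexes so that discs may cross both $A_{0}$ and $A^{+}$, obtaining curved complexes with $d^{2}=(U_{1}+U_{2}+U_{3}+U_{4})\cdot\mathrm{Id}$, for which the triangle count is automatically a morphism commuting with the two curved differentials; filtering by the Alexander filtration (multiplicity at the $A$'s), the component of $df+fd$ from $X_{1}$ to $X_{2}$ that drops the filtration by one is precisely the left-hand side of (\ref{Adiscs}), so it vanishes. If you want to salvage your outline, you should either reproduce this curved-complex argument or genuinely carry out the degeneration analysis you allude to; as written, the step you yourself flag as ``the main obstacle'' is the lemma, and it is not established.
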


By `away from the $\alpha$ and $\beta$ curves' we mean that the the curves $\alpha_{1}$, $\alpha_{2}$, $\beta_{1}$, and $\beta_{2}$ in Figure \ref{HDCrossing} are rigid. This does allow handleslides over $\alpha_{2}$ and $\beta_{2}$.

\begin{proof}

We will assume the crossing is positive, but an analogous argument works for the negative crossing. From Heegaard Floer theory, we know that an isotopy or handleslide induces a chain homotopy equivalence on $\mathit{CFK}^{-}$ obtained by counting holomorphic triangles. If we place $X$'s at the $A$'s, we get the map 

\begin{figure}[!h]
\centering
\begin{tikzpicture}
  \matrix (m) [matrix of math nodes,row sep=5em,column sep=6em,minimum width=2em] {
     X_{1} & Y_{1} \\
     X_{2} & Y_{2} \\};
  \path[-stealth]
    (m-1-1) edge node [left] {$f_{0}$} (m-2-1)
            edge node [above] {$\Phi_{B}$} (m-1-2)
            edge node [right]{$f_{B}$} (m-2-2)
    (m-2-1.east|-m-2-2) edge node [below] {$\Phi_{B}$} (m-2-2)
    (m-1-2) edge node [right] {$f_{0}$} (m-2-2);
\end{tikzpicture}
\end{figure}

\noindent
where $f_{0}$ counts triangles with multiplicity zero at $A$ and $B$, and $f_{B}$ counts triangles with multiplicity 1 at one of the $B$'s and 0 at the other $B$ and both $A$'s.

Similarly, if we place $X$'s at the $B$'s, we get the map between smoothings

\begin{figure}[!h]
\centering
\begin{tikzpicture}
  \matrix (m) [matrix of math nodes,row sep=5em,column sep=6em,minimum width=2em] {
     Y_{1} & X_{1} \\
     Y_{2} & X_{2} \\};
  \path[-stealth]
    (m-1-1) edge node [left] {$f_{0}$} (m-2-1)
            edge node [above] {$\Phi_{A}$} (m-1-2)
            edge node [right]{$f_{A}$} (m-2-2)
    (m-2-1.east|-m-2-2) edge node [below] {$\Phi_{A}$} (m-2-2)
    (m-1-2) edge node [right] {$f_{0}$} (m-2-2);
\end{tikzpicture}
\end{figure}

Using these maps, we can construct a map $f$ between the cubes as in Figure \ref{f}, where $L$ is the linear relation $U_{1}+U_{2}+U_{3}+U_{4}$ and $f_{A^{+}B}$ counts triangles with multiplicity one at one of the $A$'s and one of the $B$'s, and multiplicity 0 at the other $A$ and $B$. The fact that $f$ is a chain map almost follows from the previous two examples - the only remaining maps that we have to show commute correspond to the maps from the upper left $X_{1}$ to the lower right $X_{2}$. That is,

\begin{equation} \label{Adiscs}
 f_{A^{+}B} \circ d_{X_{1}} + d_{X_{2}} \circ f_{A^{+}B} + f_{A^{+}} \circ \Phi_{B} + \Phi_{A^{+}} \circ f_{B} + \Phi_{A^{+}B} \circ f_{0}+f_{0} \circ \Phi_{A^{+}B} =0 
 \end{equation}

\begin{figure}

\centering
\begin{tikzpicture}
  \matrix (m) [matrix of math nodes,row sep=5em,column sep=6em,minimum width=2em] {
     X_{1} & Y_{1} \\
     X_{1} & X_{1} \\
                &            \\
     X_{2} & Y_{2} \\
     X_{2} & X_{2} \\};
  \path[-stealth]
    (m-1-1) edge node [left] {$L$} (m-2-1)
            edge node [above] {$\Phi_{B}$} (m-1-2)
            edge node [right]{$\Phi_{A^{+}B}$} (m-2-2)
            edge [bend right = 45] node [left] {$f_{0}$} (m-4-1)
            edge [bend right = 55] node [right] {$f_{B}$} (m-4-2)
    (m-2-1) edge node [below] {1} (m-2-2)
            edge [bend right=45] node [left] {$f_{0}$} (m-5-1)
    (m-1-2) edge node [right] {$\Phi_{A^{+}}$} (m-2-2)
            edge [bend left=45] node [right] {$f_{0}$} (m-4-2)
            edge [bend left=55] node [right] {$f_{A}$} (m-5-2)
    (m-2-2) edge [bend left=45] node [right] {$f_{0}$} (m-5-2)    
    (m-4-1) edge node [left] {$L$} (m-5-1)
            edge node [above] {$\Phi_{B}$} (m-4-2)
            edge node [right]{$\Phi_{A^{+}B}$} (m-5-2)
    (m-5-1.east|-m-5-2) edge node [below] {1} (m-5-2)
    (m-4-2) edge node [right] {$\Phi_{A^{+}}$} (m-5-2);
    
 \draw [->] (m-1-1) 
           to [out=180,in=135] (current bounding box.south west) 
           to [out=-45,in=-130] node[below,midway]{$f_{AB}$} (m-5-2); 
\end{tikzpicture}
\caption{The Chain Map}\label{f}
\end{figure}

To see this, we will examine the standard knot Floer complexes from the diagrams $\mathcal{H}_{1}$ and $\mathcal{H}_{2}$, with the modification that we allow discs to pass through both $A$ and $A^{+}$. These complexes are now curved complexes with \[d^{2}=(U_{1}+U_{2}+U_{3}+U_{4})I\] and counting holomorphic triangles gives a morphism of curved complexes that still commutes with the two differentials (see Figure \ref{curved}).

\begin{figure}[!h]

\centering
\begin{tikzpicture}
  \matrix (m) [matrix of math nodes,row sep=5em,column sep=6em,minimum width=2em] {
     X_{1} & Y_{1} \\
     X_{2} & Y_{2} \\};
  \path[-stealth]
    (m-1-1) edge [bend left] node [left] {$f_{0}$} (m-2-1)
            edge [bend left] node [above] {$\Phi_{B}$} (m-1-2)
            edge node [right]{$f_{B}$} (m-2-2)
            edge [bend right] node [left] {$f_{A^{+}B}$} (m-2-1)
    (m-2-1) edge [bend left] node [below] {$\Phi_{B}$} (m-2-2)
    (m-1-2) edge [bend right] node [right] {$f_{0}$} (m-2-2)
            edge [bend left] node [right] {$f_{A^{+}B}$} (m-2-2)
            edge [bend left] node [above] {$\Phi_{A^{+}}$} (m-1-1)
    (m-2-2) edge [bend left] node [below right]  {$\Phi_{A^{+}}$} (m-2-1);
            
\draw [->] (m-1-2) 
           to [out=45,in=45] (current bounding box.south east) 
           to [out=-135,in=-60] (m-2-1);
           
\node at  (2.7,-2.5)  {$f_{A^{+}}$};
           
\end{tikzpicture}
\caption{A Morphism of Curved Complexes}\label{curved}
\end{figure}
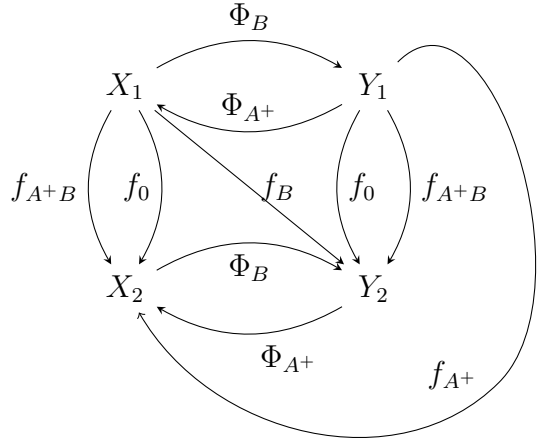

This complex is filtered by the number of times a disc passes through one of the $A$'s (the Alexander filtration) and the $df+fd$ terms from $X_{1}$ to $X_{2}$ which decrease the filtration by $1$ are precisely those from (\ref{Adiscs}). Since the maps from holomorphic triangles commute with the differentials on these two complexes, they must sum to zero. It follows that our choice of $f$ from the cube corresponding to $\mathcal{H}_{1}$ to the cube corresponding to $\mathcal{H}_{2}$ is a chain map. 

Since $f$ does not decrease the cube grading, $f$ is filtered. To see that this map induces an isomorphism on the associated graded objects, we simply observe that the maps restricted to each vertex in the cube of resolutions are precisely the standard maps induced by isotopies or handleslides for those diagrams, which are known to be chain homotopy equivalences. Applying Lemma \ref{homalglemma} completes the proof.

\end{proof}

This lemma allows us to do isotopies and handleslides away from crossings without impacting the filtered chain homotopy type. We will also need to perform $(0,3)$ stabilizations.

\begin{lem} \label{insertionlemma}

Let $D$ be a braid diagram, and let $D'$ be the diagram obtained by adding a bivalent vertex (an insertion) to $D$. Then the filtered chain homotopy types of $D$ and $D'$ are the same with respect to the cube filtration.

\end{lem}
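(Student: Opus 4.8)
The plan is to treat the insertion exactly as the paper treats handleslides and isotopies in Lemma \ref{heegaardmoves}: realize it as a localized Heegaard move, build the associated chain map between the cube complexes, check that it is filtered for $\mathcal{F}_{cube}$, and then finish with Lemma \ref{homalglemma} by computing on the associated graded. Concretely, passing from $D$ to $D'$ changes the Heegaard diagram only inside a small disk around the basepoint $O_i$: it performs a $(0,3)$-stabilization there, introducing one new $\alpha$-circle and one new $\beta$-circle meeting in a canceling pair of intersection points together with the new basepoints $O_{n+1}$ and $X_{n+1}$ (Figure \ref{HDMarkedEdge}). Since this modification is supported away from every crossing, the cube-of-resolutions decomposition is affected vertexwise: each complete resolution $S_v$ of $D$ is replaced by the singular diagram $S_v'$ obtained by subdividing the edge $e_i$ with a bivalent vertex. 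A bivalent vertex contributes nothing to $gr_{cube}$, so the cube grading is literally unchanged, and there is no grading shift to insert.

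Next I would construct the comparison map. Running the same argument as in Lemma \ref{heegaardmoves} — now with the standard destabilization (quasi-)isomorphism in place of the handleslide/isotopy triangle maps — produces a chain map $\Psi\colon \widetilde{C}_{F}(D') \to \widetilde{C}_{F}(D)$ assembled from its restrictions to the vertices of the cube (the analogue of Figure \ref{f}), where on each vertex $\Psi$ is the usual destabilization chain homotopy equivalence of the singular Heegaard diagrams, plus higher terms that do not decrease $gr_{cube}$. Hence $\Psi$ is a filtered chain map for $\mathcal{F}_{cube}$, and by Lemma \ref{homalglemma} it suffices to show that $\Psi$ induces an isomorphism on the homology of the associated graded objects, i.e. on $\bigoplus_v H_*(\widetilde{C}_{F}(S_v'),d_0) \to \bigoplus_v H_*(\widetilde{C}_{F}(S_v),d_0)$. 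This reduces the lemma to a single claim about one fixed vertex.

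The vertexwise claim is the only point requiring genuine work, and the one wrinkle is that the insertion enlarges the ground ring by a new variable $U_{n+1}$. I would argue directly from the explicit description of $\widetilde{C}_{F}(S,Z)$ recalled above: subdividing $e_i$ by the bivalent vertex $v$ replaces the tensor factor $R \xrightarrow{U_i} R$ attached to $e_i$ by the length-two factor associated to the two pieces of $e_i$ and to $v$, and this factor is contractible back onto $R \xrightarrow{U_i} R$ once one uses that $U_{n+1}$ acts as $U_i$ up to homotopy — the same same-component basepoint fact already invoked in Section \ref{filterededgemaps}. Combined with Lemma \ref{bigthm}, which identifies each $H_*(\widetilde{C}_{F}(S_v',Z),d_0)$ with $\widetilde{H}_{H}(S_v'-Z)$ and hence with $\widetilde{H}_{H}(S_v-Z)$ (a bivalent vertex is transparent in the HOMFLY-PT/MOY calculus), this shows $\Psi$ is a quasi-isomorphism on every summand, completing the proof. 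I expect the genuine obstacle to be handling this extra-variable contraction cleanly and compatibly across all cycles $Z$ simultaneously; the attendant bookkeeping of the grading shifts $q(Z)$, $a(Z)$ under edge subdivision should be routine, since nothing substantive changes when a transparent bivalent vertex is inserted.
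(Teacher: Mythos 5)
Your overall scheme (produce a filtered chain map, check it on the associated graded, invoke Lemma \ref{homalglemma}) is the right shape, but the key step is exactly the one you wave through: the existence of the cube-level comparison map $\Psi$. Lemma \ref{heegaardmoves} cannot be ``run again with the destabilization map in place of the triangle maps,'' because its proof is specific to isotopies and handleslides: the map is defined by counting holomorphic triangles between two sets of attaching curves on the same diagram, and the chain-map identity at a crossing is verified by the curved-complex/Alexander-filtration trick. A $(0,3)$ stabilization is not an isotopy or handleslide, so there are no triangle maps to assemble, and a collection of vertexwise destabilization equivalences does not automatically glue to a chain map on the filtered total complex --- the ``higher terms that do not decrease $gr_{cube}$'' that you posit are precisely the data whose existence and compatibility with all the edge and higher cube differentials would have to be established, presumably by new holomorphic-curve counts. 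This is the kind of argument the paper deliberately avoids (see the remark after Case 1 of the Reidemeister I analysis), so as written your proposal assumes the hard part.

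The paper's proof sidesteps the construction of any such map. It first performs two handleslides in the Heegaard diagram for $D'$, which take place away from all crossings and are therefore covered by Lemma \ref{heegaardmoves}; after these moves the new $\alpha$- and $\beta$-circles from the insertion meet in exactly two points $x,y$, and the standard $(0,3)$-stabilization argument identifies the whole cube complex \emph{on the chain level} as a mapping cone
\[
\widetilde{C}_{F}(D') \cong \Bigl[\,\widetilde{C}_{F}(D)[U_{2}] \xrightarrow{\;U_{1}+U_{2}\;} \widetilde{C}_{F}(D)[U_{2}]\,\Bigr],
\]
which contracts (setting $U_{2}=U_{1}$) to $\widetilde{C}_{F}(D)$ by Gaussian elimination; since the contracted piece sits in a single cube grading, this is manifestly a filtered chain homotopy equivalence, with no need for the $E_{1}$ identification via Lemma \ref{bigthm}, the Koszul bookkeeping over the enlarged ring, or the same-component $U$-action. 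If you want to salvage your route, you would have to either construct $\Psi$ honestly (including its higher components and the verification that it is a chain map), or reduce to the paper's explicit algebraic identification --- at which point the associated-graded argument becomes unnecessary.
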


\begin{proof}

The local Heegaard diagrams for $D$ and $D'$ are shown in Figure \ref{insert}. We can perform two handleslides in the Heegaard diagram for $D'$ as shown in Figure . By Lemma \ref{heegaardmoves}, since these handleslides happen away from all crossings, they do not change the chain homotopy type of the cube complex.

\begin{figure}[h!]
\begin{subfigure}{.5\textwidth}
 \centering
   \begin{overpic}[width=.5\textwidth]{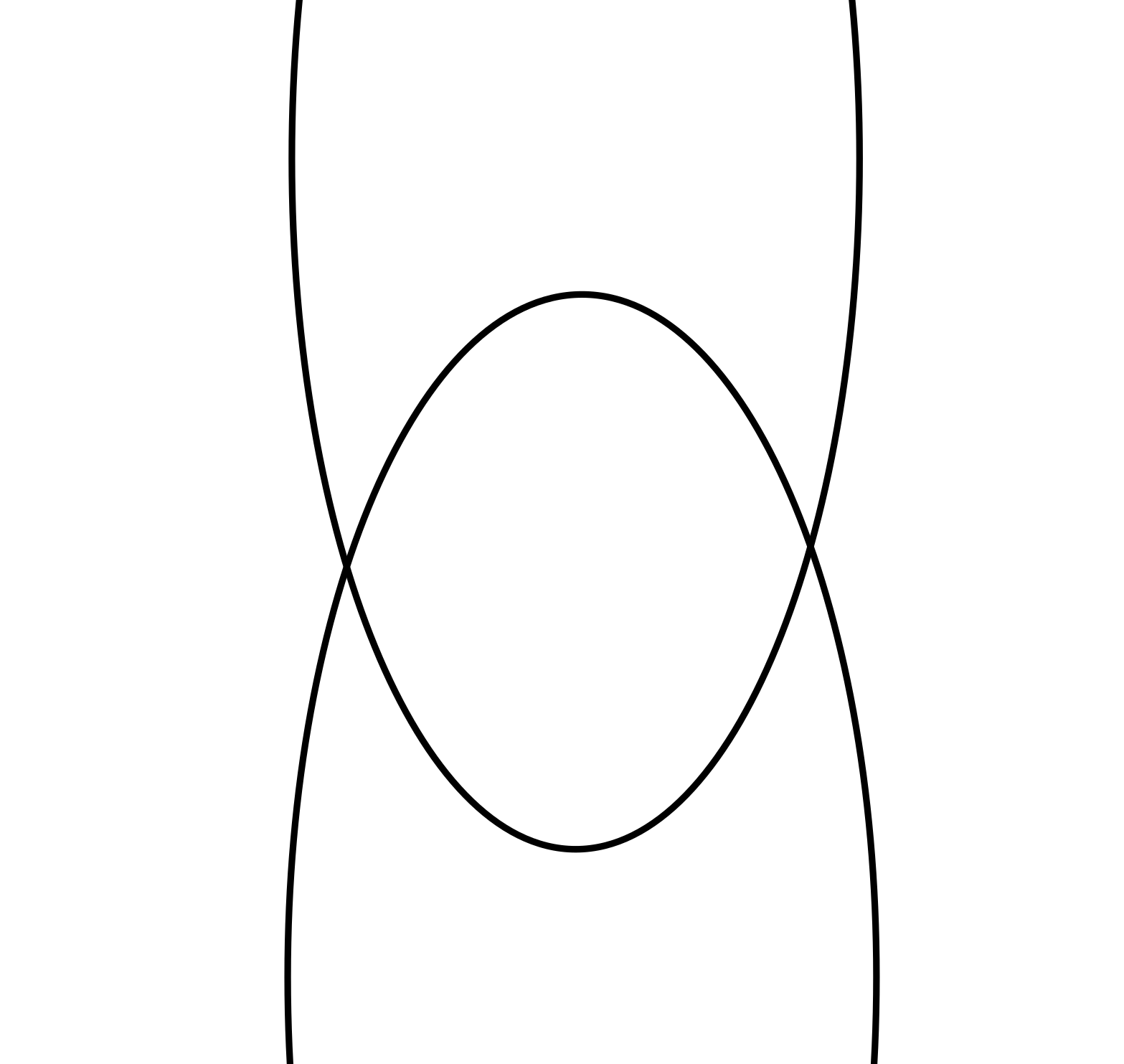}
   \put(46,41){$O_{1}$}
   \end{overpic}
  \caption{The Heegaard diagram for $D$}
\end{subfigure}%
\begin{subfigure}{.5\textwidth}
  \centering
   \begin{overpic}[width=.5\textwidth]{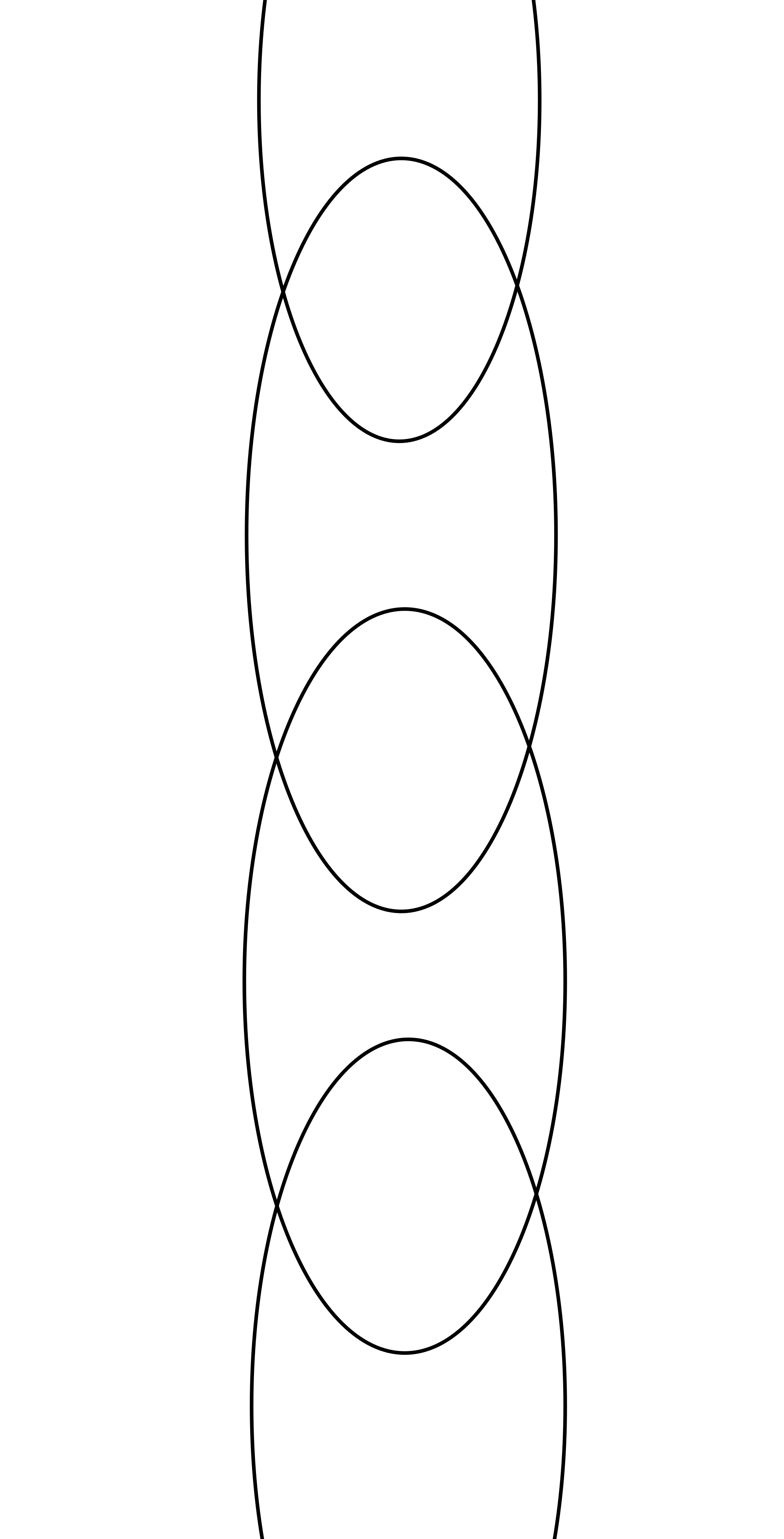}
   \put(23,79){$O_{1}$}
   \put(23.5,49){$X$}
   \put(23,21){$O_{2}$}
   \end{overpic}
   \caption{The Heegaard diagram for $D'$}
\end{subfigure}
\caption{The Heegaard diagrams before and after adding the insertion}
\label{insert}
\end{figure}

\begin{figure}[h!]
\begin{subfigure}{.5\textwidth}
 \centering
   \begin{overpic}[width=.5\textwidth]{Insertion.png}
   \put(23,79){$O_{1}$}
   \put(23.5,49){$X$}
   \put(23,21){$O_{2}$}
   \end{overpic}
  \caption{Pre-handleslides}
\end{subfigure}%
\begin{subfigure}{.5\textwidth}
  \centering
   \begin{overpic}[width=.6\textwidth]{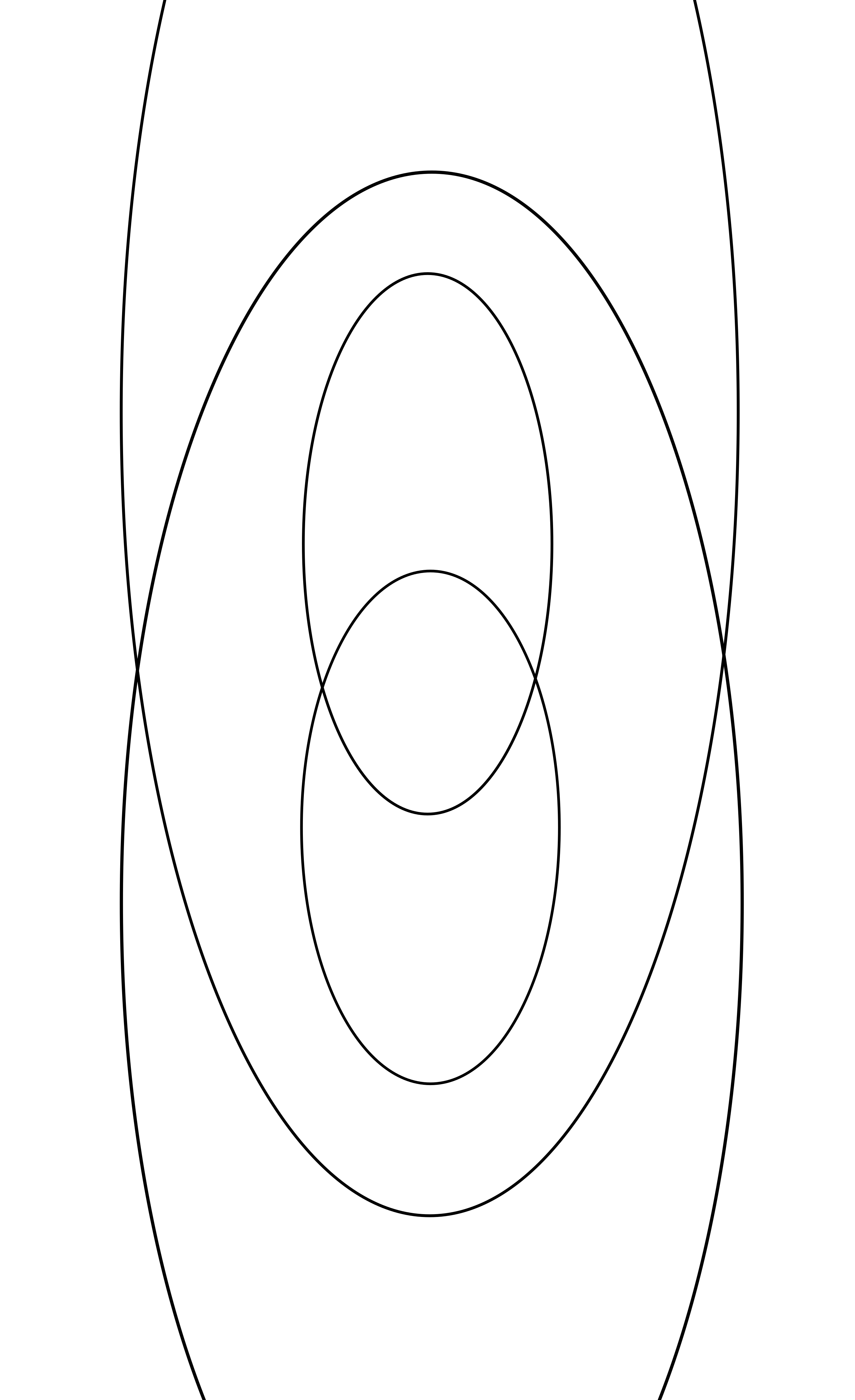}
      \put(28,49){$X$}
      \put(28,74){$O_{1}$}
      \put(28,26){$O_{2}$} 
   \end{overpic}
   \caption{Post-handleslides}
\end{subfigure}
\caption{The Heegaard diagram for $D'$ before and after the two handleslides.}
\label{insert}
\end{figure}

By the standard argument regarding $(0,3)$ stabilizations, the intersection points $x$ and $y$ give us a decomposition of $\widetilde{C}(D')$

\[ \widetilde{C}(D') \cong \widetilde{C}(D)[U_{2}] \xrightarrow{U_{1}+U_{2}} \widetilde{C}(D)[U] \]

\noindent
Canceling the mapping cone gives 

\[\widetilde{C}(D') \cong \widetilde{C}(D)[U_{2}] /U_{1}=U_{2} \cong \widetilde{C}(D) \]
\end{proof}

Let $D_{a}$ and $D_{b}$ be two braid diagrams which differ by one of the moves in Figure. Let $T_{a}$ and $T_{b}$ be the associated tangles, and suppose $B$ is a ball in $S^{2}$ with $B \cap D_{a}=T_{a}$, $B \cap D_{b} = T_{b}$. The ball $B$ determines a ball $B'$ in the Heegaard diagrams for $D_{a}$ and $D_{b}$, unique up to isotopy relative to the $\alpha$, $\beta$ curves and the $X$, $O$ markings. Let $\mathcal{H}_{a}$ and $\mathcal{H}_{b}$ denote the corresponding Heegaard diagrams.

\begin{lem} \label{lem4.5}

The Heegaard diagrams for $D_{a}$ and $D_{b}$ can be connected by a sequence of isotopies and handleslides that take place within $B'$, together with addition or removal of insertions.

\end{lem}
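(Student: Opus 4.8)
The plan is to reduce the global statement about Heegaard diagrams to a purely local statement about the tangle regions, using the fact that outside the ball $B$ the diagrams $D_a$ and $D_b$ are literally identical. First I would fix a standard Heegaard diagram associated to the braid closure (the one built by stacking the crossing pieces of Figure \ref{HDCrossing} and the insertion pieces of Figure \ref{HDMarkedEdge}, together with the decorated edge component), and observe that such a diagram is determined, up to isotopy of the $\alpha,\beta$ curves and the $X,O$ markings, by the combinatorial data of the braid diagram: which pieces appear and how they are glued along their boundary circles. Since $T_{out}$ is the same tangle in $D_a$ and $D_b$, I can choose $\mathcal{H}_a$ and $\mathcal{H}_b$ to agree outside $B'$ on the nose, with all the discrepancy concentrated inside $B'$.

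Next I would handle the Type~II and Type~III moves, where $T_a$ and $T_b$ have the same number of endpoints on top and bottom and the same underlying (unoriented, unlabeled) graph structure after smoothing is irrelevant — the key point is that the tangle $T_b$ in the Type~II and Type~III cases is obtained from a braid-word manipulation, so the two standard Heegaard pieces for $T_a$ and $T_b$, both being built from crossing pieces, have boundary data that matches and interiors that differ by a finite sequence of handleslides and isotopies entirely supported in $B'$ away from the crossing curves $\alpha_1,\alpha_2,\beta_1,\beta_2$ of each local piece. This is essentially the statement that any two multi-pointed Heegaard diagrams for the same (tangle, basepoint) pair in a ball, with matching boundary, are related by moves in the ball; I would invoke the standard Heegaard moves classification (Reidemeister--Singer type) adapted to the tangle-with-basepoints setting, noting that stabilizations of the Heegaard surface correspond exactly to the insertions of Lemma \ref{insertionlemma}. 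The Type~I moves are slightly different because $T_a$ has one crossing and $T_b$ has zero, so the number of $\alpha,\beta$ circles changes; here I would use that a Reidemeister~I move adds or removes a crossing piece, which after a handleslide becomes an insertion piece (the extra pair of curves from the crossing becomes the $(0,3)$-stabilization pair), so the discrepancy is absorbed by one addition/removal of an insertion plus handleslides in $B'$.

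The main obstacle I expect is bookkeeping the requirement that the handleslides and isotopies stay \emph{away from the crossing curves} in the sense of Lemma \ref{heegaardmoves} — i.e., that the rigid curves $\alpha_1,\alpha_2,\beta_1,\beta_2$ at each \emph{outer} crossing are never slid over. Inside $B'$ there are only the inner crossings' curves to worry about, and for those the rigidity restriction is not imposed (Lemma \ref{heegaardmoves} only freezes the curves of a single crossing at a time, and permits slides over $\alpha_2,\beta_2$), so the real content is verifying that a connecting sequence exists that is supported in $B'$ and never touches the outer crossings' curves. Since those outer curves lie outside $B'$ by construction, any sequence supported in $B'$ automatically satisfies this; thus the obstacle reduces to the existence-in-a-ball statement, which follows from the standard uniqueness of Heegaard diagrams for a fixed tangle-with-basepoints relative to the boundary. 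I would therefore structure the proof as: (1) normalize $\mathcal{H}_a,\mathcal{H}_b$ to agree outside $B'$; (2) cite the relative Heegaard-moves uniqueness to connect the interiors by isotopies, handleslides, and (de)stabilizations in $B'$; (3) reinterpret the (de)stabilizations as insertions via Lemma \ref{insertionlemma}; (4) treat the Type~I case separately since it changes the curve count, absorbing the difference into one insertion. The routine verification that the standard local pieces for $T_a$ and $T_b$ actually do differ only within $B'$ I would leave to inspection of the relevant figures.
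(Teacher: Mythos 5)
Your approach diverges from the paper's: the paper proves Lemma \ref{lem4.5} constructively, by explicitly exhibiting the sequence of isotopies, handleslides and destabilizations connecting the local Heegaard diagrams for $D^{I+}_{a}$ and $D^{I+}_{b}$ (Figure \ref{Moves}) and asserting that the other braidlike moves are handled similarly. Your step (1), normalizing $\mathcal{H}_{a}$ and $\mathcal{H}_{b}$ to agree outside $B'$, is fine, but there is a genuine gap in steps (2)--(3). The ``relative Heegaard-moves uniqueness'' you want to cite is not available in the form you need: the standard (Reidemeister--Singer type) statements for multi-pointed Heegaard diagrams connect two diagrams by isotopies, handleslides, index $1/2$ stabilizations (which increase the genus of the Heegaard surface) and index $0/3$ stabilizations, whereas Lemma \ref{lem4.5} only permits isotopies, handleslides and insertions, and an insertion is by definition a $(0,3)$ stabilization at an $O_{i}$ (Figure \ref{HDMarkedEdge}) --- it does not change the genus. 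Your claim that ``stabilizations of the Heegaard surface correspond exactly to the insertions of Lemma \ref{insertionlemma}'' conflates the two types of stabilization; showing that the index $1/2$ stabilizations can be avoided, with all moves supported in $B'$ rel boundary, at genus zero, and compatible with the prescribed $X$, $O$ structure, is precisely the content that would have to be proved. The paper sidesteps this entirely by writing down the moves explicitly.

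A second, concrete error: you treat Types II and III as cases needing only isotopies and handleslides inside $B'$, reserving insertions for Type I on the grounds that only there does the curve count change. But the paper's braidlike moves relate the $2$-crossing Type II tangle and the $6$-crossing Type III tangle to $0$-crossing tangles (Figure \ref{Reidemeister}), so in every case the numbers of $\alpha$ and $\beta$ curves and of $X$ and $O$ basepoints inside $B'$ differ between $\mathcal{H}_{a}$ and $\mathcal{H}_{b}$; additions and removals of insertions are unavoidable in all five moves, not just Type I. With these points repaired --- either by actually proving the relative, genus-zero, restricted-move-set uniqueness statement you invoke, or by exhibiting explicit move sequences for each tangle replacement as the paper does --- your outline would go through.
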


\begin{proof}

The moves to connect diagrams $D_{a}^{I+}$ and $D_{b}^{I+}$ are shown in Figure \ref{Moves} - the other diagrams can be connected similarly.

\begin{figure}[h!]
\begin{subfigure}{.33\textwidth}
 \centering
   \begin{overpic}[height=.65\textwidth]{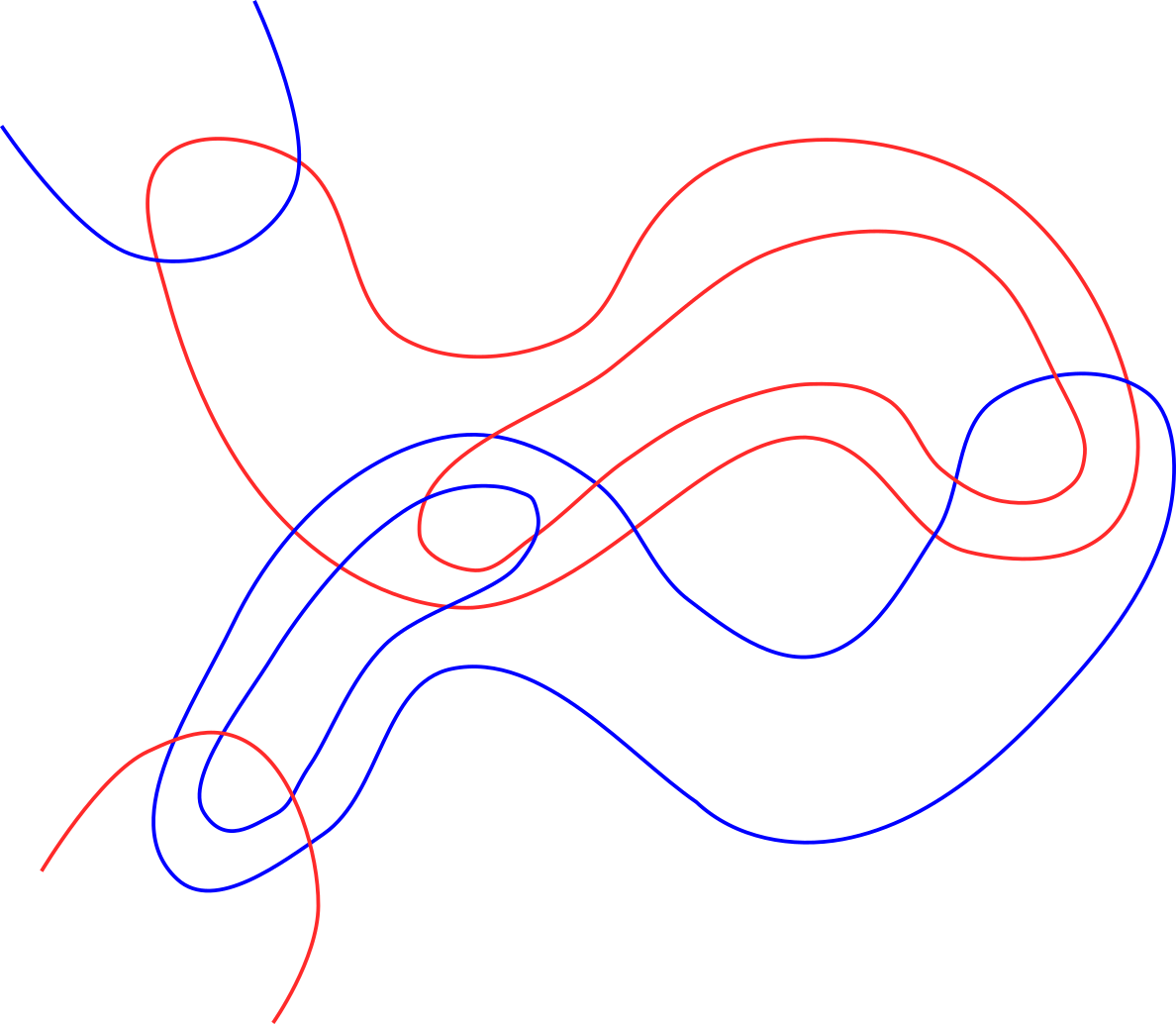} \fontsize{4}{4}\selectfont
   \put(37.5,41){$X$}
   \put(45.5, 39){$X$}
   \put(18.5,19.5){$O_{3}$}
   \put(84,48){$O_{2}$}
   \put(15,70){$O_{1}$}
   \normalsize
   \put(115,40){$\Longrightarrow$}
   \end{overpic}
\end{subfigure}%
\begin{subfigure}{.33\textwidth}
  \centering
   \begin{overpic}[height=.65\textwidth]{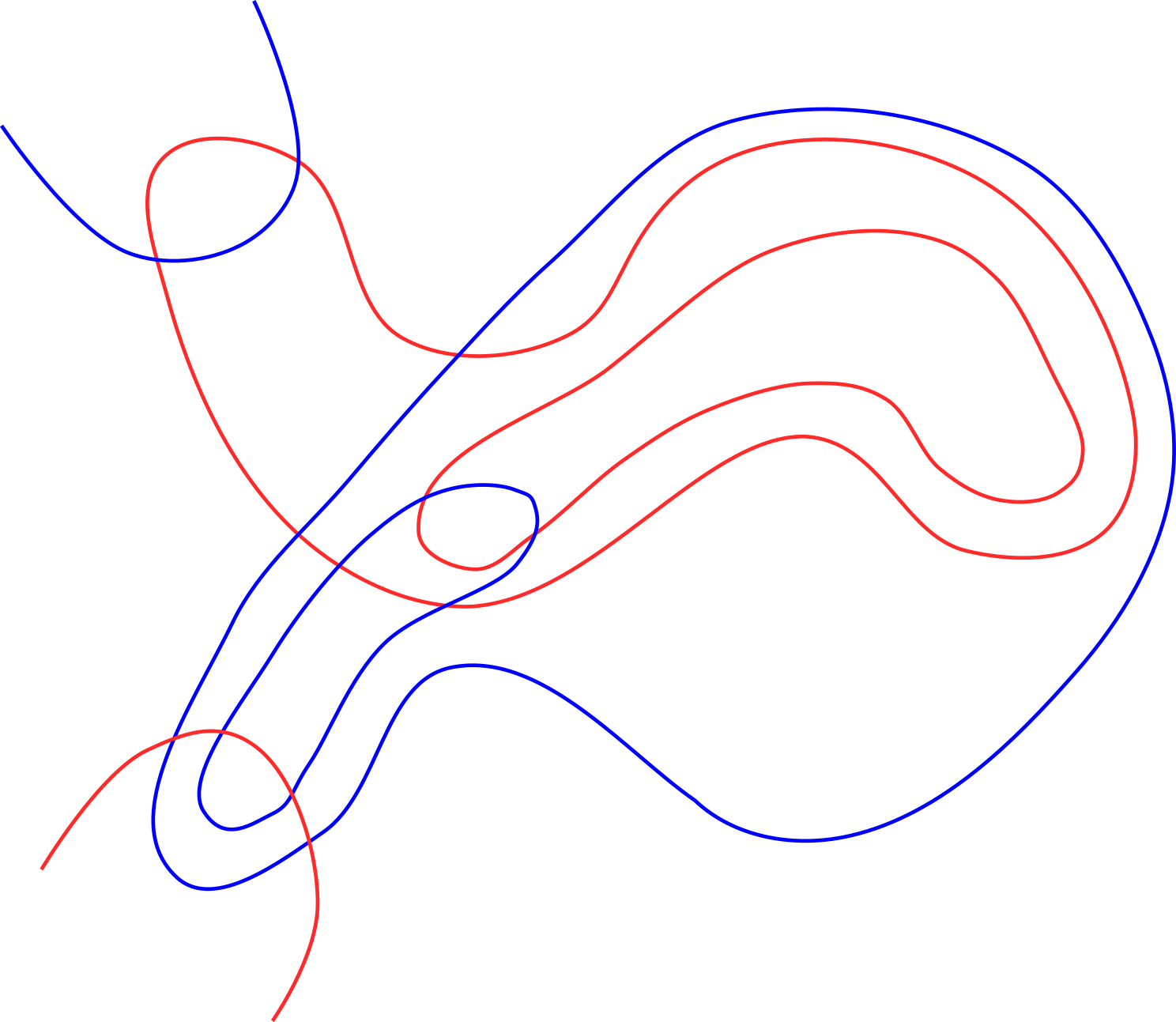}  \fontsize{4}{4}\selectfont
   \put(37.5,41){$X$}
   \put(45.5, 39){$X$}
   \put(18.5,19.5){$O_{3}$}
   \put(84,48){$O_{2}$}
   \put(15,70){$O_{1}$} 
   \normalsize
   \put(117,40){$\Longrightarrow$}
   \end{overpic}
\end{subfigure}
\begin{subfigure}{.33\textwidth}
 \centering
   \begin{overpic}[height=.65\textwidth]{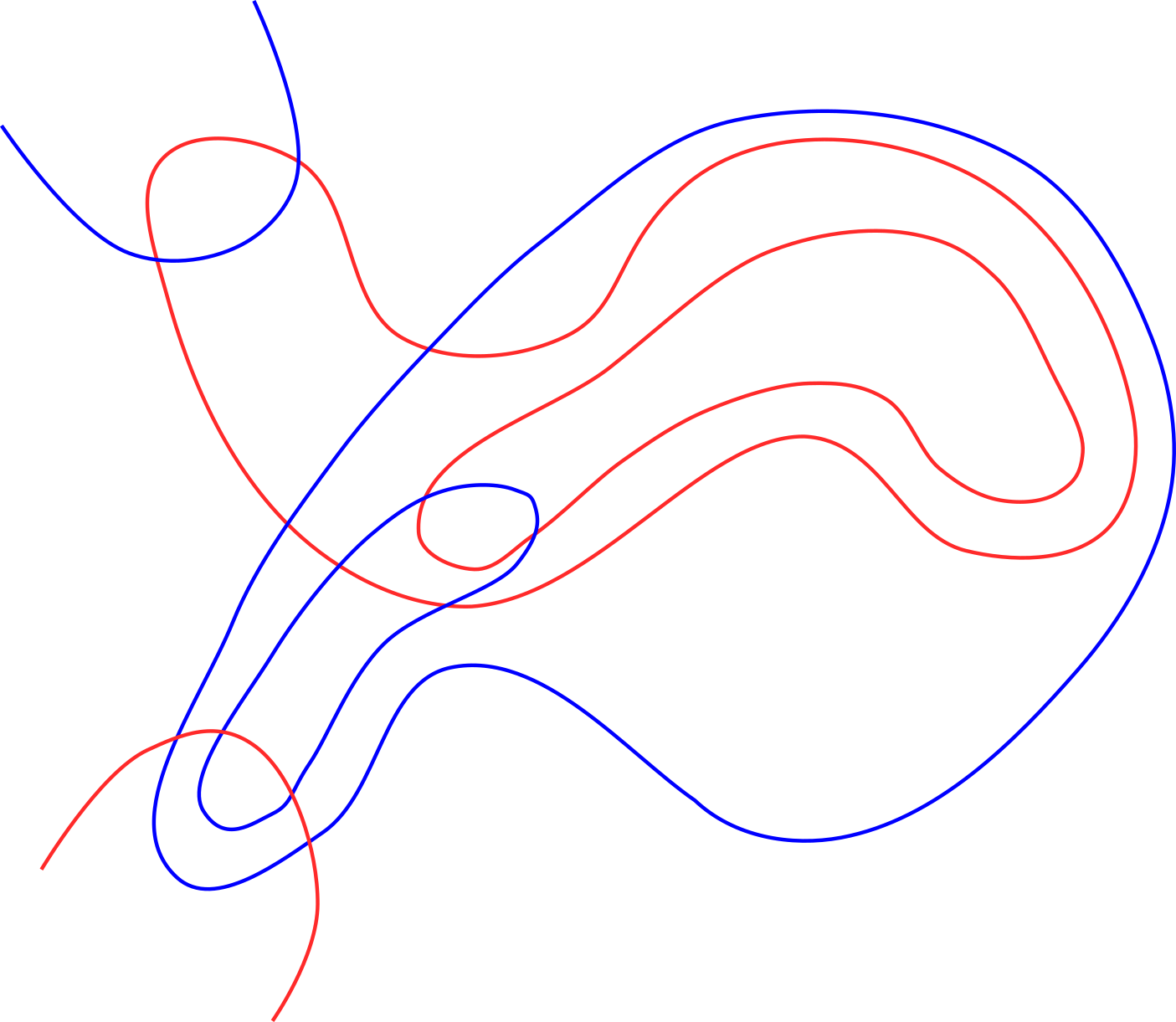} \fontsize{4}{4}\selectfont
   \put(37.5,41){$X$}
   \put(37.5, 51){$X$}
   \put(18.5,19.5){$O_{3}$}
   \put(84,48){$O_{2}$}
   \put(15,70){$O_{1}$} 
   \normalsize
   \put(50,-20){$\big\Downarrow$}
   \end{overpic}
\end{subfigure}%
\newline
\vspace{10mm}
\newline
\begin{subfigure}{.2\textwidth}
 \centering
   \begin{overpic}[height=.9\textwidth]{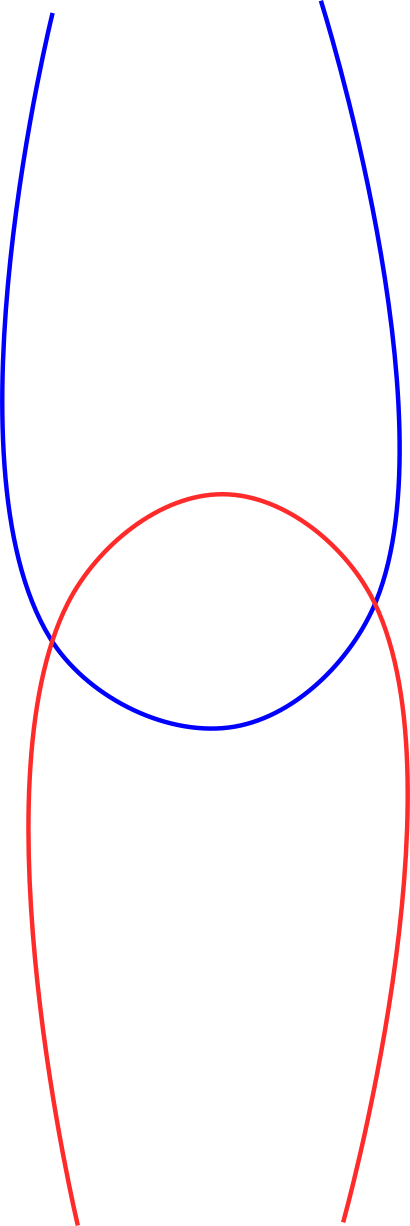} \fontsize{5}{12}\selectfont
   \put(12,48){$O_{1}$}
   \normalsize
   \put(65,45){$\Longleftarrow$}
   \end{overpic}
\end{subfigure}
\begin{subfigure}{.2\textwidth}
 \centering
   \begin{overpic}[height=.9\textwidth]{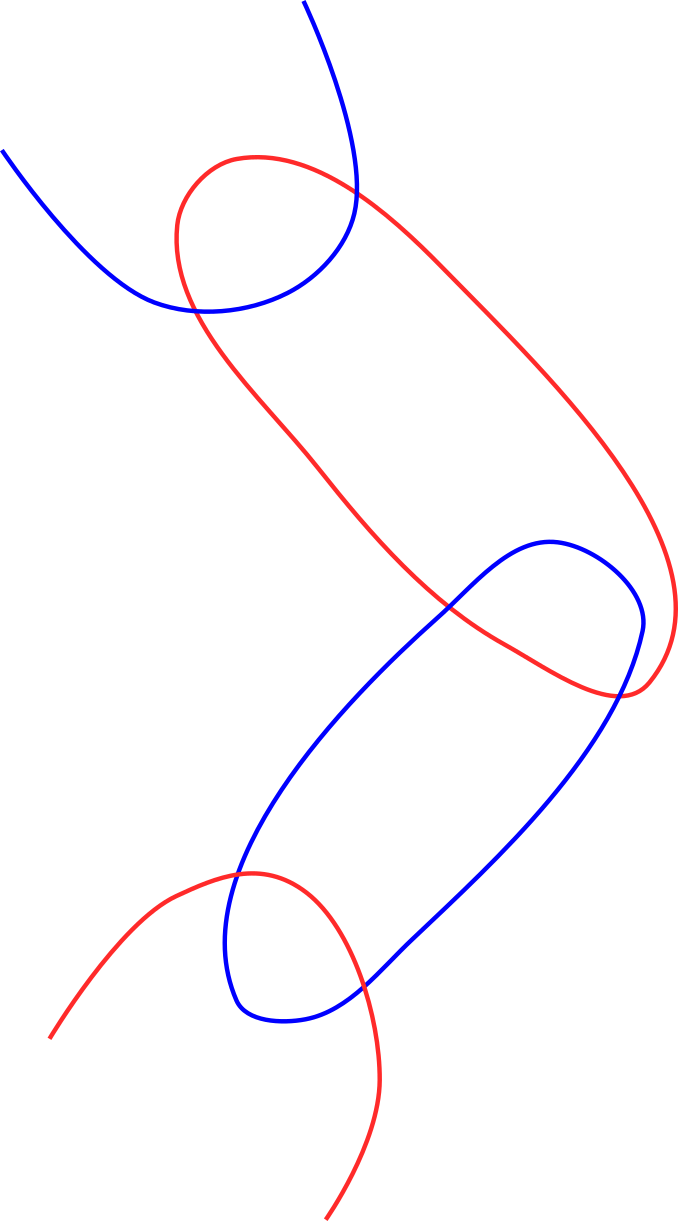} \fontsize{4}{12}\selectfont
   \put(42, 49){$X$}
   \put(19,20){$O_{3}$}
   \put(17,79){$O_{1}$} 
   \normalsize
   \put(80,45){$\Longleftarrow$}
   \put(225,45){$\Longleftarrow$}
   \end{overpic}
\end{subfigure}
\begin{subfigure}{.25\textwidth}
 \centering
   \begin{overpic}[height=.6\textwidth]{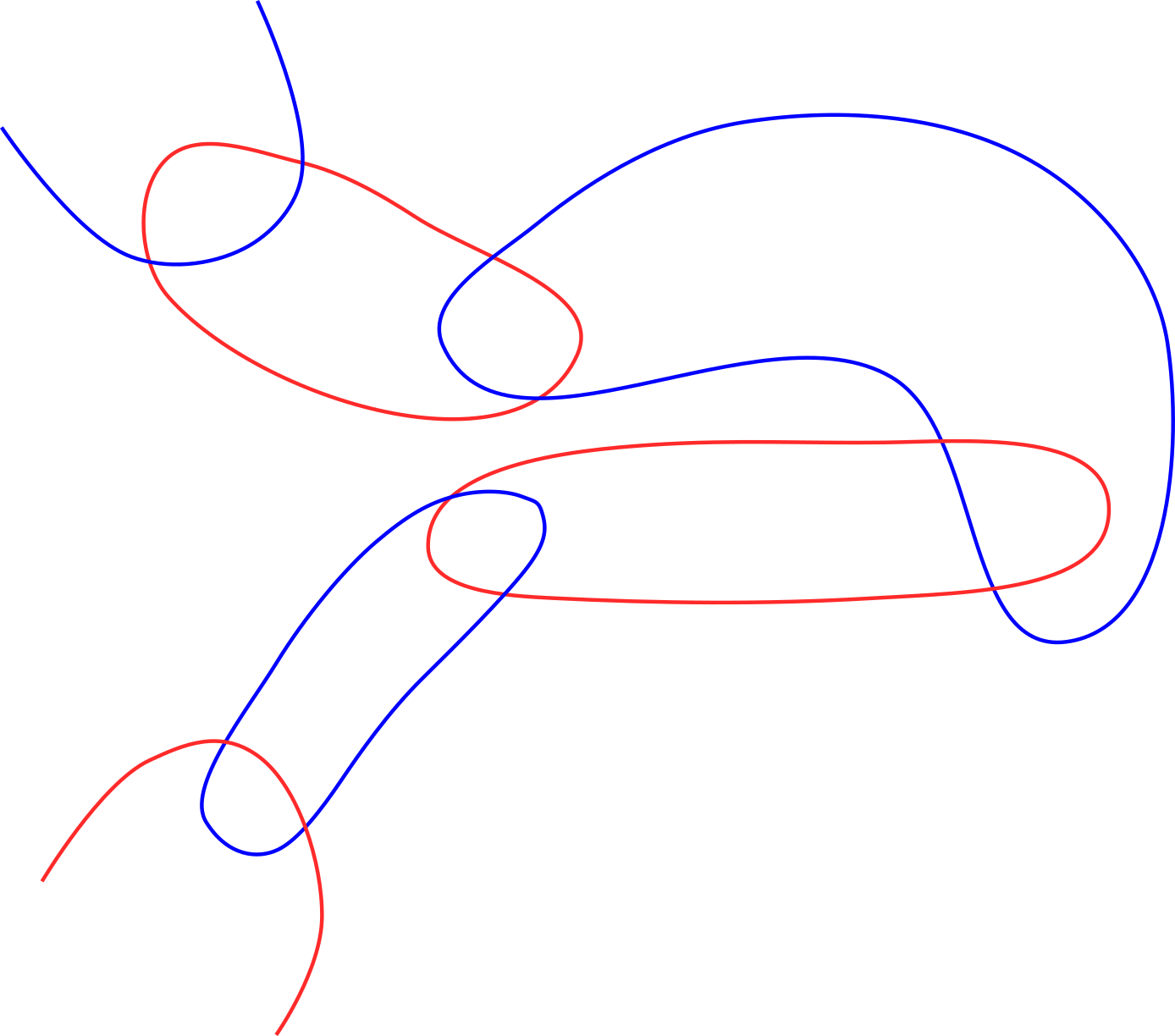} \fontsize{4}{4}\selectfont
   \put(37.5,41){$X$}
   \put(40, 58){$X$}
   \put(17.25,19.5){$O_{3}$}
   \put(84,43){$O_{2}$}
   \put(15,70){$O_{1}$} 
   \end{overpic}
\end{subfigure}
\begin{subfigure}{.25\textwidth}
  \centering
   \begin{overpic}[height=.6\textwidth]{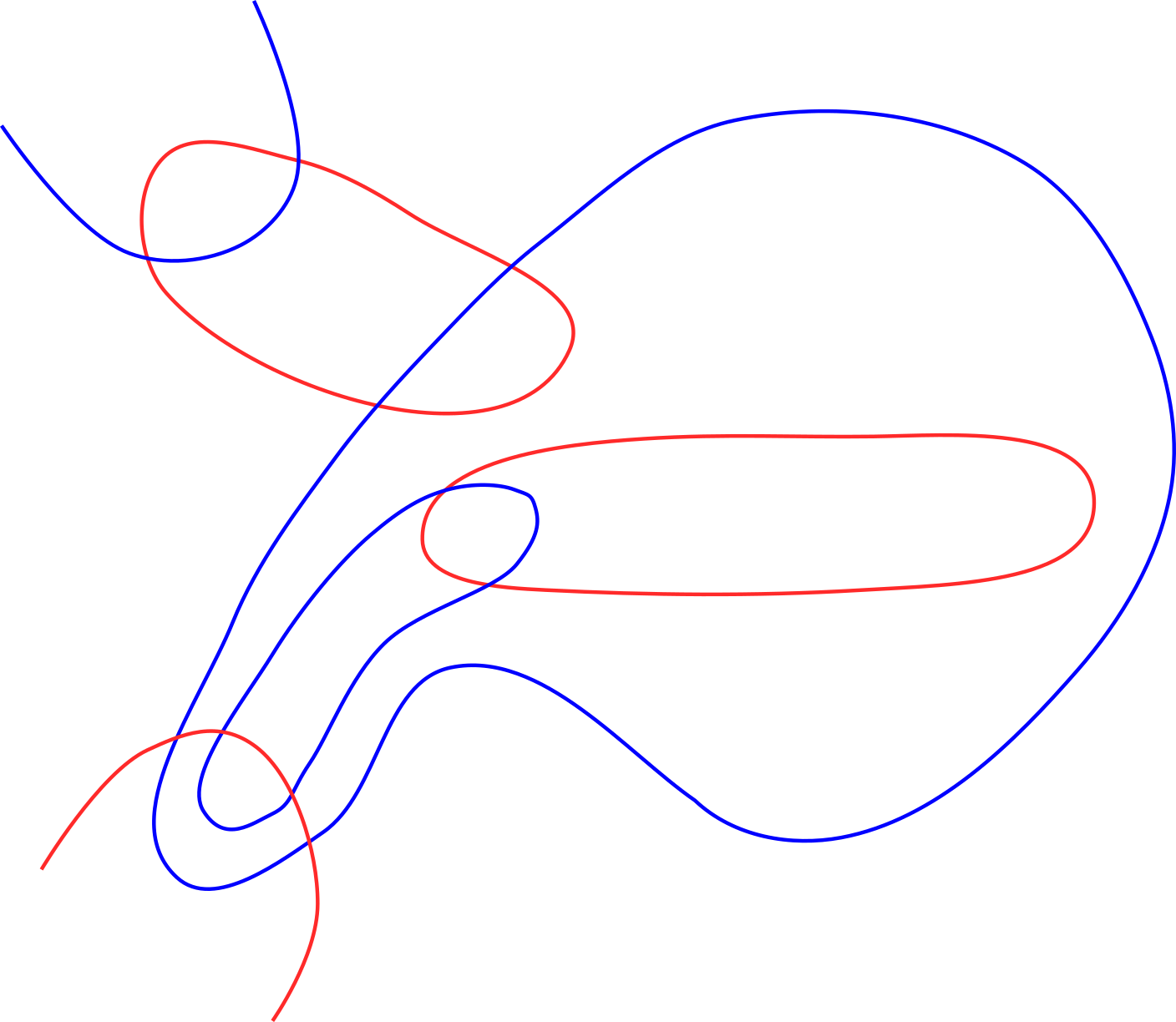} \fontsize{4}{4}\selectfont
   \put(37.5,41){$X$}
   \put(37.5, 55){$X$}
   \put(17.25,19.5){$O_{3}$}
   \put(84,43){$O_{2}$}
   \put(15,70){$O_{1}$} 
   \end{overpic}
\end{subfigure}

 \caption{Isotopies, handleslides, and destabilizations connecting $D_{a}^{I+}$ to $D_{b}^{I+}$ } \label{Moves}
\end{figure}

\end{proof}

Combining Lemmas \ref{heegaardmoves} and \ref{lem4.5} proves that $\widetilde{C}_{F}(D)$ satisfies condition (a) of Theorem \ref{thm3.1}. We now want to show that it satisfies (b) as well, i.e. that for each braidlike Reidemeister move, the homology $H_{*}(H_{*}(\widetilde{C}_{F}(D), d_{00}), d_{01}^{*})$ lies in a single local cube grading.

\subsection{Reidemeister I} We will add basepoints to the diagram $D_{a}^{I+}$ in every region in the complement of  $D_{a}^{I+}$ except the center region involved in the Reidemeister move  (see Figure \ref{R1basepoints}). Let $d_{ij}^{f}$ denote the component of $d_{ij}$ which preserves this basepoint filtration. Since $H_{*}(\widetilde{C}_{F}(D), d_{00}) \cong H_{*}(\widetilde{C}_{F}(D), d^{f}_{00})$, it suffices to show that $H_{*}(H_{*}(\widetilde{C}_{F}(D_{a}^{I+}), d^{f}_{00}), (d^{f}_{01})^{*})$ lies in a single local cube grading. 

\begin{figure}
 \centering
   \begin{overpic}[width=.25\textwidth]{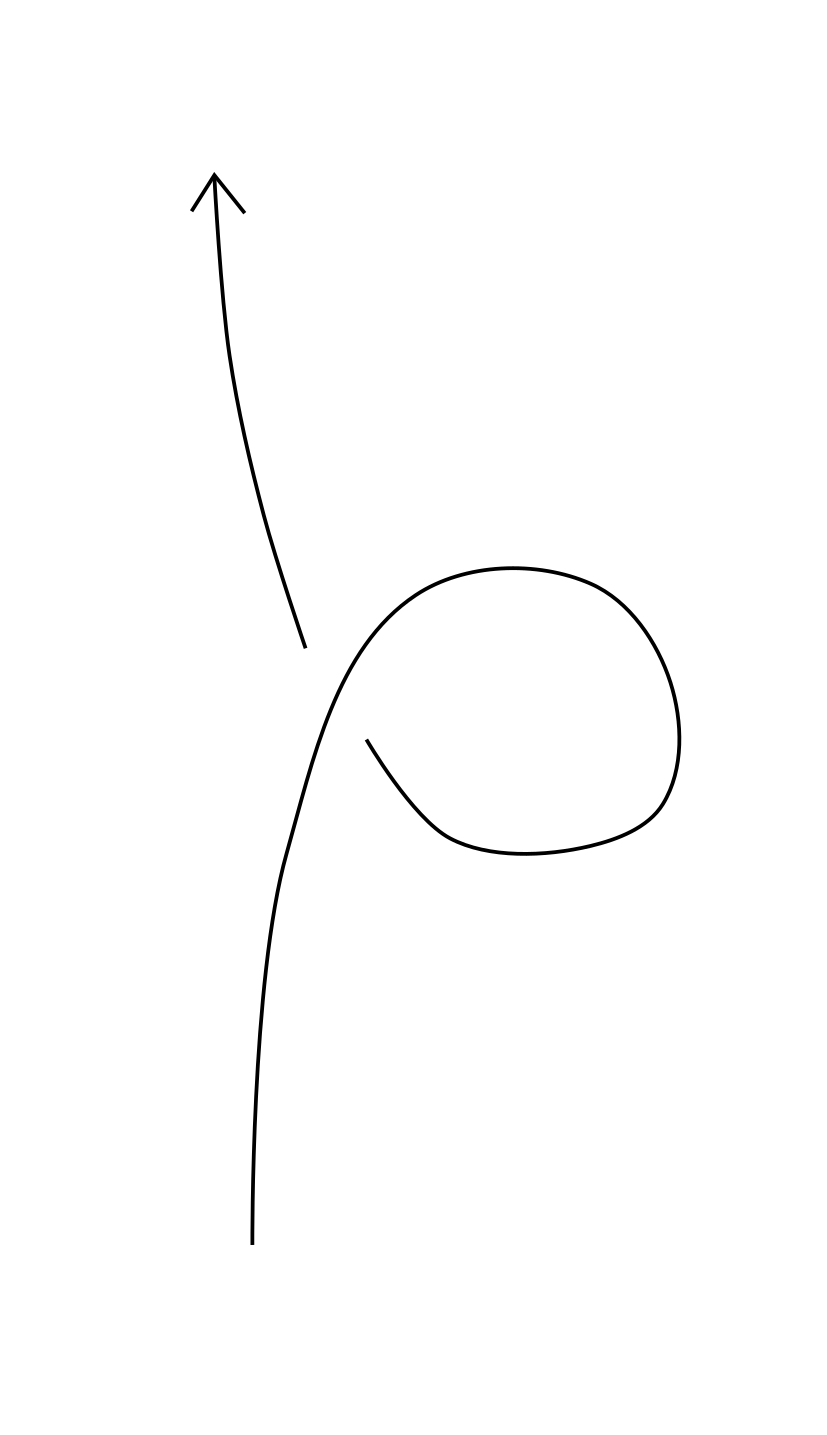}
   \put(12,18){$e_{3}$}
   \put(52,50){$e_{2}$}
   \put(10,80){$e_{1}$}
   \end{overpic}
  \caption{The diagram $D_{a}^{I+}$ with additional basepoints} \label{R1basepoints}
\end{figure}%

\begin{defn}

Let $T$ be a tangle in a diagram $D$ as in the previous definition. If $Z$ is a cycle in $D$, we define the \emph{inner cycle} of $Z$ to be the set of edges of $Z$ that lie in $T$, and the \emph{outer cycle} to be the set of edges in $Z$ in the complement of $T$.

\end{defn}

Given an arbitrary inner cycle and outer cycle, they may not necessarily join up to make a cycle in the whole diagram $D$ - whether or no they do is determined by whether the outgoing edges in the outer cycle are the same as the incoming edges of the inner cycle, and vice-versa. 

\begin{defn}

For a braid-like tangle $T$ with $k$ incoming and $k$ outgoing strands, we define a \emph{connectivity} to be a pair $A_{in},A_{out}$ of subsets of $\{1,...,k\}$. If $Z$ is an inner cycle, then $i \in A_{in}(Z)$ if the $i$th incoming edge is contained in $Z$. Similarly, $j \in A_{out}(Z)$ if the $j$th outgoing edge is contained in $Z$. 

\end{defn}

Since the differentials $ d^{f}_{00}$ and  $d^{f}_{01}$ preserve vertices in the outer cube, it suffices to consider partial resolutions $S_{a}^{I+}$ of $D_{a}^{I+}$ such that all outer crossings are resolved, but the inner crossing is not.

Note that an outer cycle uniquely determines the connectivity of a compatible inner cycle. With the filtration coming from the additional basepoints, two cycles lie in the same filtration level iff they have the same outer cycle. We will prove that for each outer cycle, the homology $H_{*}(H_{*}(\widetilde{C}_{F}(S^{I+}_{a}), d^{f}_{00}), (d^{f}_{01})^{*})$ lies in the same local cube grading.

Let $Z$ be an outer cycle. For the Reidemeister I move, there are exactly two possible connectivities - $A_{in}(Z)=A_{out}(Z)=\emptyset$ and $A_{in}(Z)=A_{out}(Z)=\{1\}$.

\begin{figure}[h!]
 \centering
   \begin{overpic}[width=.85\textwidth]{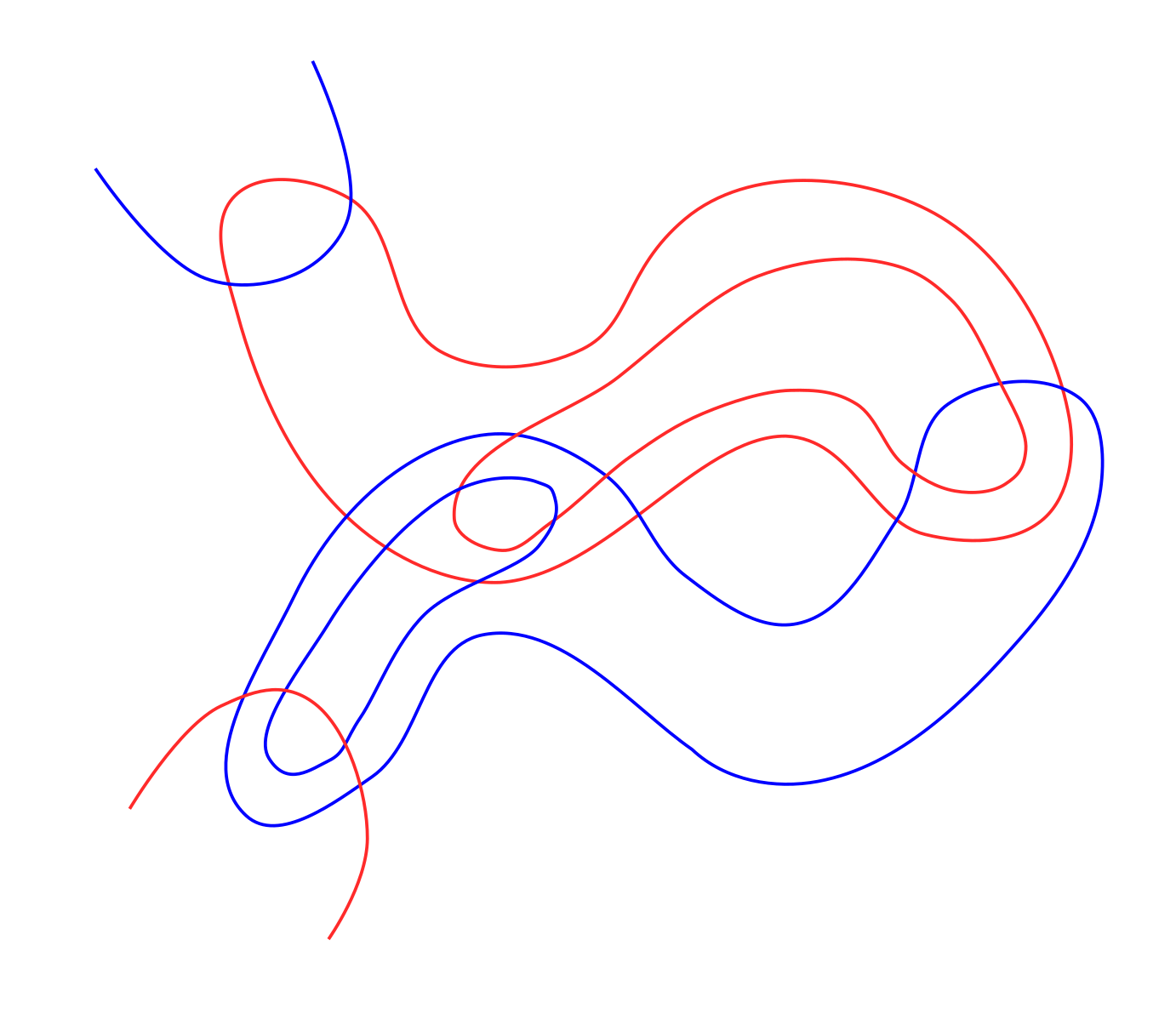}
   \scriptsize
   \put(89.8,52.6){$\bullet$}
   \put(91,54){$a_{1}$}
   \put(84.4,53){$\bullet$}
   \put(85.5, 54.7){$b_{1}$}
   \put(77.2,45.5){$\bullet$}
   \put(74.7, 45){$b_{2}$}
   \put(75.6,41.5){$\bullet$}
   \put(72.5,41.3){$a_{2}$}
   \put(46.5,41.5){$\bullet$}
   \put(47.5, 41){$e_{2}$}
   \put(38.5,44){$\bullet$}
   \put(37,45.3){$e_{1}$}
   \put(43.3,48.7){$\bullet$}
   \put(43, 50.2){$d_{1}$}
   \put(51.1,45.3){$\bullet$}
   \put(50.4,47.1){$d_{2}$}
   \put(32.2,39.2){$\bullet$}
   \put(32.1,37.8){$g_{1}$}
   \put(40.1,36.3){$\bullet$}
   \put(40,35){$g_{2}$}
   \put(29,41.75){$\bullet$}
   \put(26.4,41.6){$f_{1}$}
   \put(53.65,41.95){$\bullet$}
   \put(55.1,42){$f_{2}$}
   \put(18.96, 61.63){$\bullet$}
   \put(17.7,60.3){$c_{1}$}
   \put(29.35, 68.65){$\bullet$}
   \put(30.5, 69.2){$c_{2}$}
   \put(23.8, 27){$\bullet$}
   \put(22.5,28.5){$h_{1}$}
   \put(20.25, 26.5){$\bullet$}
   \put(18.8, 27.8){$i_{1}$}
   \put(28.8, 22.5){$\bullet$}
   \put(30,22.3){$h_{2}$}
   \put(30.1, 19.0){$\bullet$}
   \put(31.3,18.4){$i_{2}$}
     
   \normalsize
   \put(-8,43){$p_{1}$}
   \put(-5,40){$\bullet$}
   \put(110,43){$p_{2}$}
   \put(108,40){$\bullet$}   
   \put(22, 66){$O_{1}$}
   \put(80.8, 48.1){$O_{2}$}
   \put(24, 23){$O_{3}$}
   \put(41, 41.6){$A_{0}$}
   \put(49.5, 41.6){$A^{+}$}
   \put(45,46){$B$}
   \put(36.3, 38.5){$B$}
   \end{overpic}
   \caption{Reidemeister I Heegaard Diagram} \label{HDR1basepoints}
\end{figure}

\noindent
\textbf{Case 1:} $A_{in}(Z)=A_{out}(Z)= \emptyset$.

In $S_{a}^{I+}$, there are two inner cycles with this connectivity: $Z_{\emptyset}$ and $Z_{2}$. We will fix the outer cycle with this connectivity throughout. Let $Z'_{\emptyset}$ denote the unique cycle in $S_{b}^{I+}$ with the same outer cycle.

With the Heegaard diagram for $S_{a}^{I+}$ as labeled in Figure \ref{HDR1basepoints}, the generators with this connectivity are given by $(a,e), (f,e), (b,g)$, and $(d,g)$. After canceling the isomorphisms in $Y$ from $e_{1}$ to $e_{2}^{\gamma}$ corresponding to the bigon containing $A_{0}$, the complex is given by Figure \ref{d0R1}. The diagram only includes the $d^{f}_{00}$ differentials because it gets far too messy with the $d^{f}_{01}$ differentials included as well. Instead, we list the $d^{f}_{01}$ differentials separately here (the computation is straightforward).

\[
\begin{split}
 & d_{01}^{f}(a_{2}e_{2}^{\alpha}) = U_{1}b_{2}g_{2} \\
 & d_{01}^{f}(a_{1}e_{2}^{\alpha}) = U_{1}b_{1}g_{2} + d_{2}g_{2} \\
 & d_{01}^{f}(a_{2}e_{2}^{\beta}) = U_{1}b_{2}g_{1} \\
 & d_{01}^{f}(a_{1}e_{2}^{\beta}) = U_{1}b_{1}g_{1} + d_{1}g_{2} \\
 & d_{01}^{f}(f_{2}e_{2}^{\alpha})= (U_{1}+U_{2})d_{2}g_{2} \\
 & d_{01}^{f}(f_{1}e_{2}^{\alpha})= d_{2}g_{1}+d_{1}g_{2} \\
 & d_{01}^{f}(f_{2}e_{2}^{\beta})= U_{1}d_{2}g_{1}+U_{2}d_{1}g_{2} \\
 & d_{01}^{f}(f_{1}e_{2}^{\beta})= d_{1}g_{1} \\ 
\end{split}
\]

\begin{figure}
\centering
\begin{tikzpicture}
  \matrix (m) [matrix of math nodes,row sep=4em,column sep=5em,minimum width=2em] {
     a_{2}e_{2}^{\alpha} & a_{1}e_{2}^{\alpha} & & b_{2}g_{2} & b_{1}g_{2} \\
     a_{2}e_{2}^{\beta} & a_{1}e_{2}^{\beta} & &b_{2}g_{1} & b_{1}g_{1} \\
                &            \\
     f_{2}e_{2}^{\alpha} & f_{1}e_{2}^{\alpha} & & d_{2}g_{2} & d_{1}g_{2}\\
     f_{2}e_{2}^{\beta} & f_{1}e_{2}^{\beta} & & d_{2}g_{1} & d_{1}g_{1} \\};
     
 \scriptsize    
 \path[-stealth]
    (m-1-1) edge node [right] {$U_{1}+U_{3}$} (m-2-1)
            edge node [above] {$U_{2}$} (m-1-2)
            edge [bend right = 30] node [left] {$1$} (m-4-1)
    (m-2-1) edge node [above] {$U_{2}$} (m-2-2)
            edge [bend right=30] node [left] {$1$} (m-5-1)
    (m-1-2) edge node [left] {$U_{1}+U_{3}$} (m-2-2)
            edge [bend left=30] node [right] {$U_{1}+U_{3}$} (m-4-2)
    (m-2-2) edge [bend left=30] node [right] {$U_{1}+U_{3}$} (m-5-2)    
    (m-4-1) edge node [right] {$U_{1}+U_{3}$} (m-5-1)
            edge node [above] {$U_{2}(U_{1}+U_{3})$} (m-4-2)
    (m-5-1.east|-m-5-2) edge node [above] {$U_{2}(U_{1}+U_{3})$} (m-5-2)
    (m-4-2) edge node [left] {$U_{1}+U_{3}$} (m-5-2)
    (m-1-4) edge node [right] {$U_{1}+U_{3}$} (m-2-4)
            edge node [above] {$U_{2}$} (m-1-5)
            edge [bend right = 30] node [left] {$1$} (m-4-4)
    (m-2-4) edge node [above] {$U_{2}$} (m-2-5)
            edge [bend right=30] node [left] {$1$} (m-5-4)
    (m-1-5) edge node [left] {$U_{1}+U_{3}$} (m-2-5)
    (m-4-4) edge node [right] {$U_{1}+U_{3}$} (m-5-4)
    (m-4-5) edge node [left] {$U_{1}+U_{3}$} (m-5-5);

\end{tikzpicture}

\caption{The complex for Case 1 with $d^{f}_{00}$ differentials shown} \label{d0R1}
\end{figure}
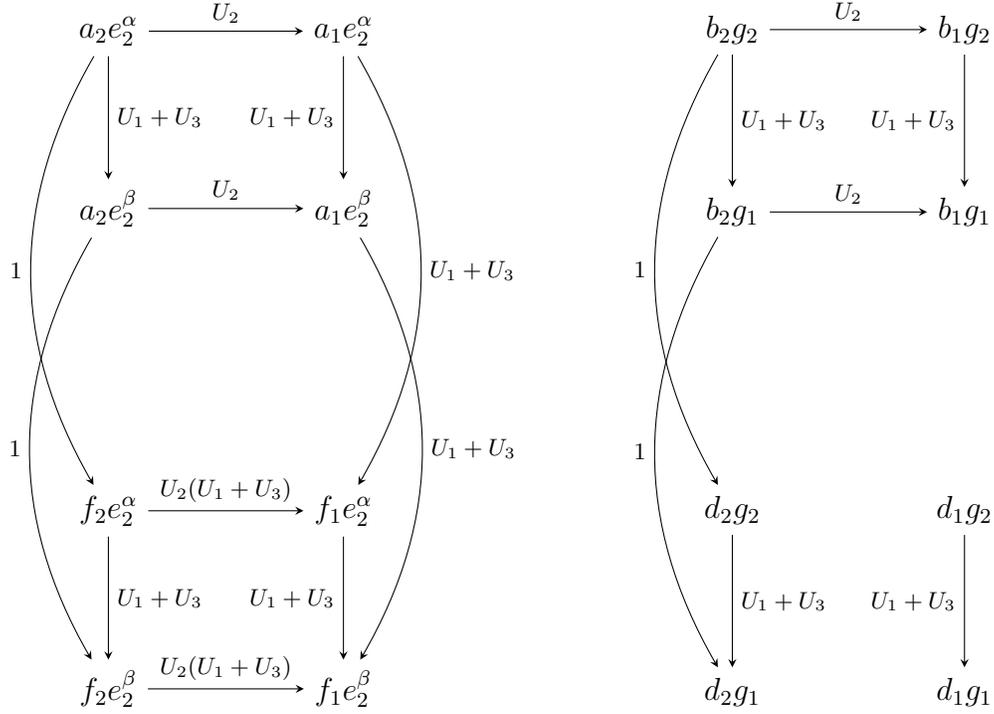

After canceling the $d^{f}_{00}$ differentials labeled with $1$ together with the change of basis $f_{1}e_{2}^{\alpha} \mapsto  f_{1}e_{2}^{\alpha}+a_{1}e_{2}^{\beta}$, the complex is given by Figure \ref{f15} (with $d^{f}_{01}$ differentials now included in the diagram).

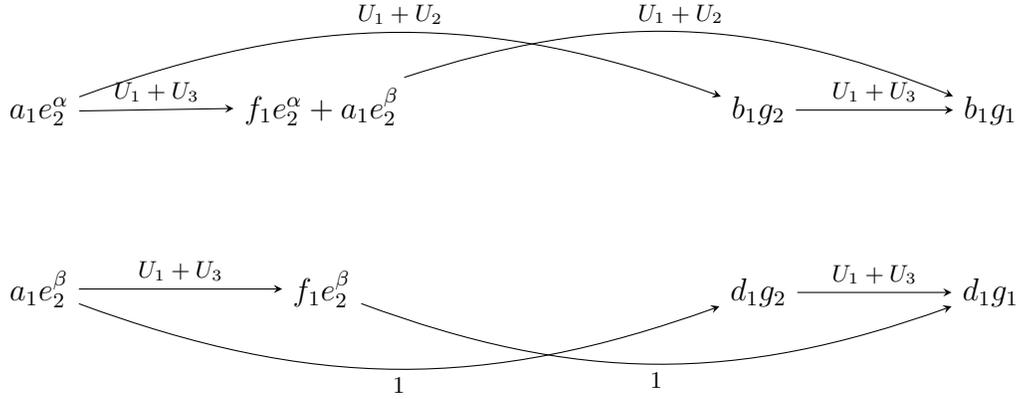
\begin{figure}
\centering

\begin{tikzpicture}
  \matrix (m) [matrix of math nodes,row sep=4em,column sep=5em,minimum width=2em] {
     a_{1}e_{2}^{\alpha} & f_{1}e_{2}^{\alpha}+a_{1}e_{2}^{\beta} & & b_{1}g_{2} & b_{1}g_{1} \\
     a_{1}e_{2}^{\beta}& f_{1}e_{2}^{\beta} & &d_{1}g_{2} & d_{1}g_{1} \\};
     
 \scriptsize    
 \path[-stealth]
    (m-1-1) edge node [above] {$U_{1}+U_{3}$} (m-1-2)
    		edge [bend left = 20] node [above] {$U_{1}+U_{2}$} (m-1-4)
    (m-1-2) edge [bend left = 20] node [above] {$U_{1}+U_{2}$} (m-1-5)		
    (m-2-1) edge node [above] {$U_{1}+U_{3}$} (m-2-2)
    		edge [bend right = 20] node [below] {$1$} (m-2-4)
    (m-2-2) edge [bend right = 20] node [below] {$1$} (m-2-5)
    (m-1-4) edge node [above] {$U_{1}+U_{3}$} (m-1-5)
    (m-2-4) edge node [above] {$U_{1}+U_{3}$} (m-2-5);

\end{tikzpicture}

\caption{The complex for Case 1 after canceling $d_{0}$ isomorphisms} \label{f15}

\end{figure}

For a pair of generators in $\widetilde{C}_{F}(S_{a}^{I+})$ connected by a differential with coefficient $U_{1}+U_{3}$, the homology can be written

\[ \widetilde{H}_{H}(S_{b}^{I+} - Z'_{\emptyset})[U_{2}] \]

\noindent
Thus, we can view the total homology with respect to $d^{f}_{00}$ as shown in Figure \ref{f16}. The differentials shown are the induced $d^{f}_{10}$ differentials.

\begin{figure}
\centering
\vspace{10mm}
\begin{tikzpicture}
  \matrix (m) [matrix of math nodes,row sep=4em,column sep=5em,minimum width=2em] {
     \widetilde{H}_{H}(S_{b}^{I+} - Z'_{\emptyset})[U_{2}] &  &\widetilde{H}_{H}(S_{b}^{I+} - Z'_{\emptyset})[U_{2}] \\
     \widetilde{H}_{H}(S_{b}^{I+} - Z'_{\emptyset})[U_{2}] &  & \widetilde{H}_{H}(S_{b}^{I+} - Z'_{\emptyset})[U_{2}]  \\};
     
 \scriptsize    
 \path[-stealth]
    (m-1-1) edge node [above] {$U_{1}+U_{2}$} (m-1-3)
	
    (m-2-1) edge node [above] {$1$} (m-2-3);

\end{tikzpicture}

\caption{The complex $H_{*}(\widetilde{C}_{F}(S^{I+}_{a}), d^{f}_{00})$ with differential $(d^{f}_{10})^{*}$} \label{f16}

\end{figure}

Since the induced $d^{f}_{01}$ maps are injective, the homology is concentrated in local cube grading $1$ (at the smoothing). This proves Case 1.

\begin{rem}

We can compute the homology for this connectivity explicitly to be 

\[ \widetilde{H}_{H}(S_{b}^{I+} - Z'_{\emptyset})[U_{2}]/U_{1}=U_{2} \]

\noindent
which is precisely the homology

\[ H_{*}(H_{*}(\widetilde{C}_{F}(S_{b}^{I+}), d_{00}^{f}),(d_{01}^{f})^{*}) \]

\noindent
as expected. In fact, a standard invariance proof would have been to prove isomorphisms of this type. However, this requires proving that the higher differentials on the complexes agree, which is possible on algebraically defined complexes like HOMFLY-PT or $sl_{n}$ homology, but very difficult in terms of counting holomorphic discs. This is why we resort to the algebraic argument from Section \ref{section3}.

\end{rem}

\noindent
\textbf{Case 2:} $A_{in}(Z)=A_{out}(Z)= \{1\}$. This case is much easier than the previous case. $S_{1}$ has two cycles with this connectivity, $Z_{13}$ and $Z_{123}$. However, $Z_{13}$ is not admissible, so the contribution to homology is trivial. The cycle $Z_{123}$ only appears in the smoothing, so the homology is concentrated in local cube grading $1$.

The negative Reidemeister I case is similar, with the homology also being concentrated in local cube grading $1$. We leave this as an exercise for the reader.

\subsection{Reidemeister II} As with the Reidemeister I move, we will add basepoints to the diagram $D_{a}^{II+}$ in every region in the complement of  $D_{a}^{II+}$ except the center region involved in the Reidemeister II move  (see Figure \ref{R2basepoints}). Let $d_{ij}^{f}$ denote the component of $d_{ij}$ which preserves this basepoint filtration. Once again, it suffices to show that $H_{*}(H_{*}(\widetilde{C}_{F}(D_{a}^{II+}), d^{f}_{00}), (d^{f}_{01})^{*})$ lies in a single local cube grading. 

\begin{figure}[h!]

\vspace{5mm}
 \centering
   \begin{overpic}[width=.15\textwidth]{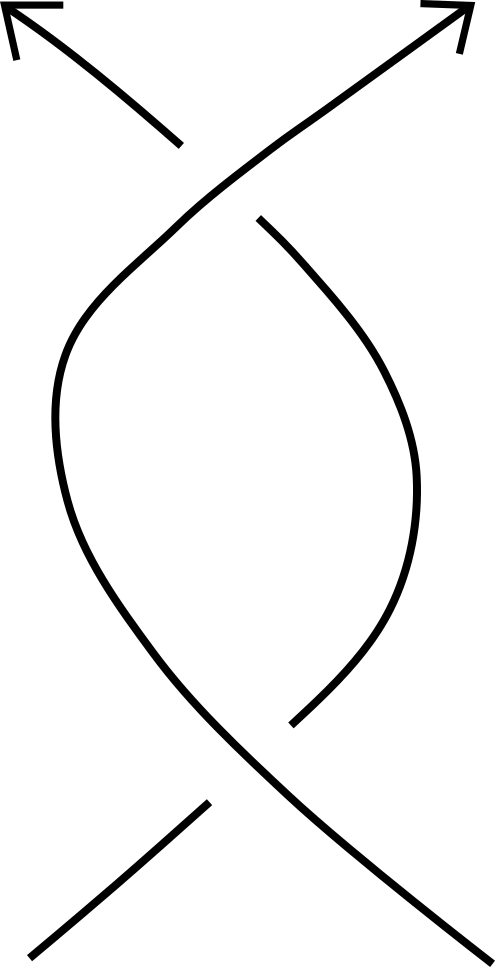}
      \put(-4,90){$e_{1}$}
   \put(48,90){$e_{2}$}
   \put(-2, 50){$e_{3}$}
   \put(45, 45){$e_{4}$}
   \put(-4.5,3){$e_{5}$}
   \put(50,3){$e_{6}$}
   \end{overpic}
   \caption{The diagram $D_{a}^{II+}$ with additional basepoints}\label{R2basepoints}
\end{figure}

Let $S^{II+}_{a}$ denote a partial resolution of $D_{a}^{II+}$ such that all outer crossings are resolved, but the inner crossings are not. We will prove that for each outer cycle, the homology $H_{*}(H_{*}(\widetilde{C}_{F}(S^{II+}_{a}), d^{f}_{00}), (d^{f}_{01})^{*})$ lies in the same local cube grading. There are $6$ connectivities for cycles $Z$ in the Reidemeister II tangle, which we will consider case by case.

\noindent
\textbf{Case 1:} $A_{in}(Z)=A_{out}(Z)= \emptyset$. The only local cycle with this connectivity is the empty cycle, $Z_{\emptyset}$. Using the edge map computations in Figure \ref{table2}, we see that this is locally just the HOMFLY-PT complex with the HOMFLY-PT edge maps. Thus,

\[ H_{*}(H_{*}(\widetilde{C}_{F}(S^{II+}_{a}, Z_{\emptyset}), d^{f}_{00}), (d^{f}_{01})^{*}) \cong H_{H}(S^{II+}_{a} - Z_{\emptyset}) \]

\noindent
It was shown in the invariance proof of HOMFLY-PT homology in \cite{KR2} that this homology lies in local cube grading $1$.

\noindent
\textbf{Case 2:} $A_{in}(Z)=A_{out}(Z)= \{1\}$. This connectivity has two cycles - $Z_{135}$ and $Z_{145}$. The cycle $Z_{135}$ is not admissible, so it does not contribute. The cycle $Z_{145}$ only has non-trivial contribution when both inner crossings are singular, which is the $0$ resolution for the positive crossing and the $1$ resolution for the negative crossing. The homology therefore only lies in local cube grading $1$.

\noindent
\textbf{Case 3:} $A_{in}(Z)= \{2\}$, $A_{out}(Z)=\{1\}$. This connectivity has two cycles in $S^{II+}_{a}$ - $Z_{136}$ and $Z_{146}$. Neither of these cycles are admissible, so the homology corresponding to this connectivity is trivial.

\noindent
\textbf{Case 4:} $A_{in}(Z)= \{1\}$, $A_{out}(Z)=\{2\}$. This connectivity has two cycles in $S^{II+}_{a}$ - $Z_{235}$ and $Z_{245}$. For $i,j \in \{0,1\}$, let $(S^{II+}_{a})^{ij}$ be the complete resolution of $S^{II+}_{a}$ such that the top crossing has the $i$-resolution and the bottom crossing has the $j$-resolution.

We will use the following shorthand notation: let $\widetilde{C}_{F}(Z_{klm}^{ij})$ denote the component of the complex $\widetilde{C}_{F}((S^{II+}_{a})^{ij})$ with underlying cycle $Z_{klm}$ (Using the notation developed so far, this would be written $\widetilde{C}_{F}((S^{II+}_{a})^{ij}, Z_{klm})$ which is quite cumbersome.)

Both of the cycles $Z_{235}$ and $Z_{245}$ are admissible and have one turn, so the total complex has four contributions: $\widetilde{C}_{F}(Z_{235}^{00})$, $\widetilde{C}_{F}(Z_{235}^{01})$, $\widetilde{C}_{F}(Z_{245}^{01})$, and $\widetilde{C}_{F}(Z_{235}^{11})$. The first superscript is 0 when the top crossing is singularized and 1 when it is smoothed, and the second superscript is 0 when the bottom crossing is smoothed and 1 when it is singularized. The complexes are as follows:

\[
\begin{split}
 & \widetilde{C}_{F}(Z^{00}_{235}) \cong \widetilde{C}_{F}^{out}(\{1\}, \{2\}) \otimes Kosz_{R}(U_{2}, U_{3}, U_{5}, U_{1}+U_{2}+U_{3}+U_{4}, U_{4}+U_{6}) \\
 & \widetilde{C}_{F}(Z^{01}_{235}) \cong \widetilde{C}_{F}^{out}(\{1\}, \{2\}) \otimes Kosz_{R}(U_{2}, U_{3}, U_{5}, U_{1}+U_{2}+U_{3}+U_{4}, U_{3}+U_{4}+U_{5}+U_{6}) \\
 & \widetilde{C}_{F}(Z^{01}_{245}) \cong \widetilde{C}_{F}^{out}(\{1\}, \{2\}) \otimes Kosz_{R}(U_{2}, U_{4}, U_{5}, U_{1}+U_{2}+U_{3}+U_{4}, U_{3}+U_{4}+U_{5}+U_{6}) \\
 & \widetilde{C}_{F}(Z^{11}_{245}) \cong \widetilde{C}_{F}^{out}(\{1\}, \{2\}) \otimes Kosz_{R}(U_{2}, U_{4}, U_{5}, U_{1}+U_{3}, U_{3}+U_{4}+U_{5}+U_{6}) \\
\end{split}
\]

\noindent
where $C_{F}^{out}(\{1\}, \{2\})$ denotes the portion of the complex coming from the outer cycle with connectivity $A_{in}(Z)= \{1\}$, $A_{out}(Z)=\{2\}$.

Since $U_{2}$ and $U_{5}$ appear in all of these and the edges in a cycle form a regular sequence, we will cancel them.

\[
\begin{split}
 & \widetilde{C}_{F}(S_{a}, Z^{00}_{235}) \cong \widetilde{C}_{F}^{out}(\{1\}, \{2\}) \otimes Kosz_{R/(U_{2}=U_{5}=0)}(U_{3}, U_{1}+U_{3}+U_{4}, U_{4}+U_{6}) \\
 & \widetilde{C}_{F}(S_{a}, Z^{01}_{235}) \cong \widetilde{C}_{F}^{out}(\{1\}, \{2\}) \otimes Kosz_{R/(U_{2}=U_{5}=0)}(U_{3},  U_{1}+U_{3}+U_{4}, U_{3}+U_{4}+U_{6}) \\
 & \widetilde{C}_{F}(S_{a}, Z^{01}_{245}) \cong \widetilde{C}_{F}^{out}(\{1\}, \{2\}) \otimes Kosz_{R/(U_{2}=U_{5}=0)}(U_{4},  U_{1}+U_{3}+U_{4}, U_{3}+U_{4}+U_{6}) \\
 & \widetilde{C}_{F}(S_{a}, Z^{11}_{245}) \cong \widetilde{C}_{F}^{out}(\{1\}, \{2\}) \otimes Kosz_{R/(U_{2}=U_{5}=0)}(U_{4},  U_{1}+U_{3}, U_{3}+U_{4}+U_{6}) \\
\end{split}
\]

From here, we will add in the $d_{00}^{f}$ and $d_{10}^{f}$ differentials. Computing them directly from the diagram would not require any advanced techniques, but there are a lot of generators and discs to count. Fortunately, there is an easier way using the edge map computations in Section \ref{filterededgemaps} together with the fact that $d^{2}=0$. The only discs we will have to count are those within the singular diagram $S_{a}^{01}$ which map between $\widetilde{C}_{F}(Z^{01}_{235})$ and $\widetilde{C}_{F}(Z^{01}_{245})$. These are shown in Figure \ref{d10}, where  

\[ A = \widetilde{C}_{F}^{out}(\{1\}, \{2\}) \otimes Kosz_{R/(U_{2}=U_{5}=0)}(U_{1}+U_{3}+U_{4})\] 

\begin{figure}

\centering
\begin{tikzpicture}
  \matrix (m) [matrix of math nodes,row sep=4em,column sep=6em,minimum width=2em] {
     A & A \\
     A & A \\
                &            \\
     A & A \\
     A & A \\};
     \scriptsize
  \path[-stealth]
    (m-1-1) edge node [right] {$U_{3}+U_{4}+U_{6}$} (m-2-1)
            edge node [above] {$U_{3}$} (m-1-2)
            edge [bend right = 45] node [right] {$1$} (m-4-2)
    (m-2-1) edge node [below] {$U_{3}$} (m-2-2)
    	    edge [bend left = 45] node [right] {$1$} (m-5-2)
    (m-1-2) edge node [right] {$U_{3}+U_{4}+U_{6}$} (m-2-2)
    (m-4-1) edge node [left] {$U_{3}+U_{4}+U_{6}$} (m-5-1)
            edge node [above] {$U_{4}$} (m-4-2)
    (m-5-1.east|-m-5-2) edge node [below] {$U_{4}$} (m-5-2)
    (m-4-2) edge node [left] {$U_{3}+U_{4}+U_{6}$} (m-5-2);

\end{tikzpicture}
\caption{The chain complex $\widetilde{C}_{F}(S^{01})$}\label{d10}
\end{figure}
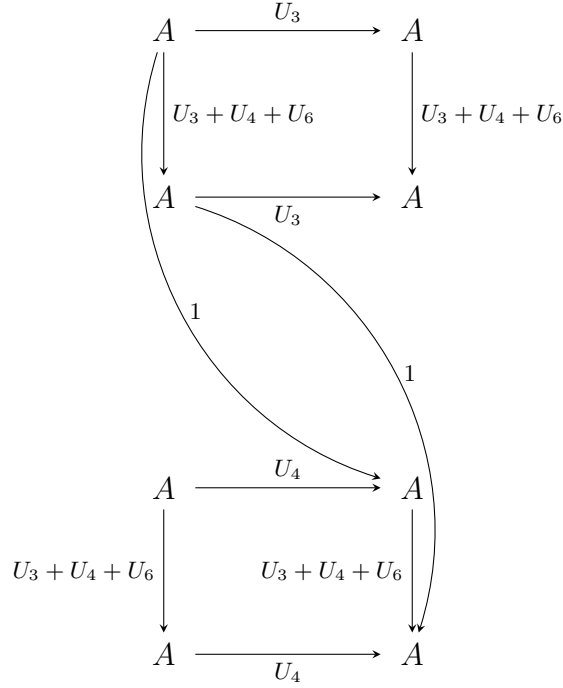

Recall that there is a canonical isomorphism between $\widetilde{C}_{F}(Z^{00}_{235})$ and $\widetilde{C}_{F}(Z^{01}_{235})$. In terms of the Koszul complexes above, this isomorphism corresponds to adding the $U_{3}$ to the $U_{4}+U_{6}$ in $\widetilde{C}_{F}(Z^{00}_{235})$. The component of $d_{10}^{f}$ which maps from $\widetilde{C}_{F}(Z^{00}_{235})$ to $\widetilde{C}_{F}(Z^{01}_{235})$ is this isomorphism composed with multiplication by $U_{6}$. Thus, the complex involving $\widetilde{C}_{F}(Z^{00}_{235})$, $\widetilde{C}_{F}(Z^{01}_{235})$, and $\widetilde{C}_{F}(Z^{01}_{245})$ is given by Figure \ref{d14}, where the only differentials not included are those mapping from $\widetilde{C}_{F}(Z^{00}_{235})$ to $\widetilde{C}_{F}(Z^{01}_{245})$. In this diagram, we have performed a change of basis in $\widetilde{C}_{F}(Z^{00}_{235})$ from $Kosz(U_{3}, U_{4}+U_{6})$ to $Kosz(U_{3}, U_{3}+U_{4}+U_{6})$ to reduce the number of arrows.

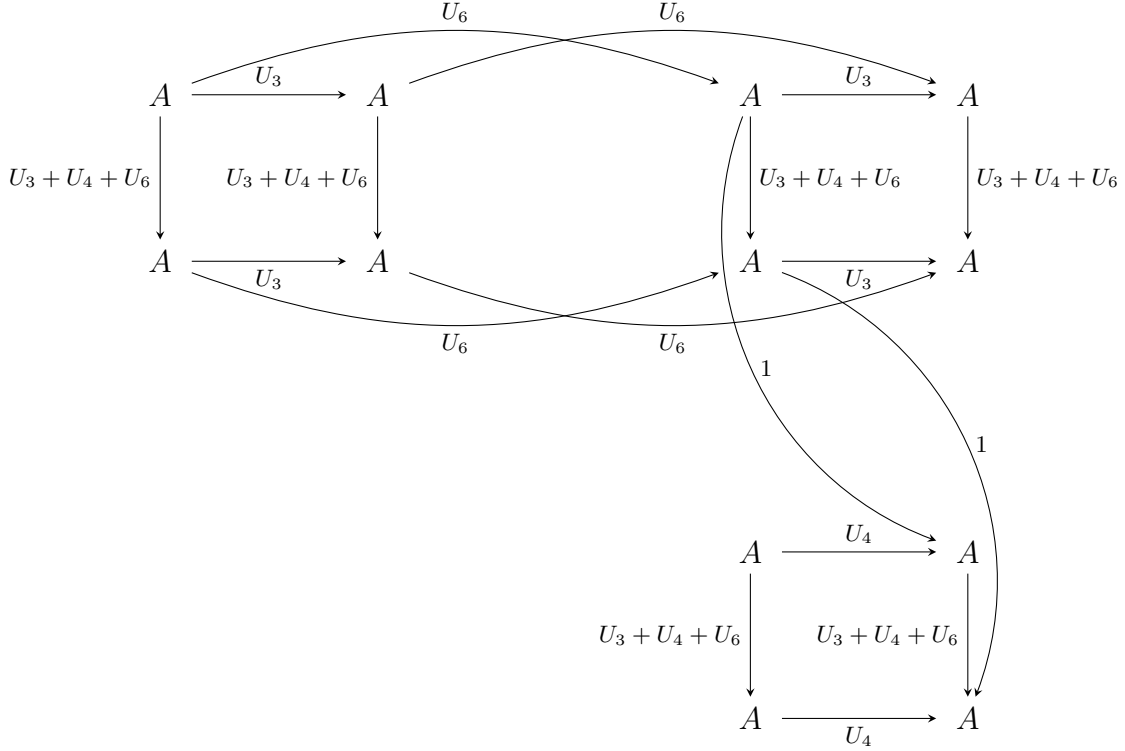
\begin{figure}

\centering
\begin{tikzpicture}
  \matrix (m) [matrix of math nodes,row sep=4em,column sep=5em,minimum width=2em] {
    A & A& & A & A \\
    A & A & &A & A \\
                &            \\
     & & & A& A \\
     & & & A & A \\};
     \scriptsize
  \path[-stealth]  
    (m-1-1) edge node [left] {$U_{3}+U_{4}+U_{6}$} (m-2-1)
            edge node [above] {$U_{3}$} (m-1-2)
            edge [bend left = 20] node [above] {$U_{6}$} (m-1-4)
    (m-2-1) edge node [below] {$U_{3}$} (m-2-2)
    	    edge [bend right = 20] node [below] {$U_{6}$} (m-2-4)
    (m-1-2) edge node [left] {$U_{3}+U_{4}+U_{6}$} (m-2-2)
    	    edge [bend left = 20] node [above] {$U_{6}$} (m-1-5)
    (m-2-2) edge [bend right = 20] node [below] {$U_{6}$} (m-2-5)
    (m-1-4) edge node [right] {$U_{3}+U_{4}+U_{6}$} (m-2-4)
            edge node [above] {$U_{3}$} (m-1-5)
            edge [bend right = 45] node [right] {$1$} (m-4-5)
    (m-2-4) edge node [below] {$U_{3}$} (m-2-5)
    	    edge [bend left = 45] node [right] {$1$} (m-5-5)
    (m-1-5) edge node [right] {$U_{3}+U_{4}+U_{6}$} (m-2-5)
    (m-4-4) edge node [left] {$U_{3}+U_{4}+U_{6}$} (m-5-4)
            edge node [above] {$U_{4}$} (m-4-5)
    (m-5-4.east|-m-5-2) edge node [below] {$U_{4}$} (m-5-5)
    (m-4-5) edge node [left] {$U_{3}+U_{4}+U_{6}$} (m-5-5);

\end{tikzpicture}
\caption{Chain complex involving $\widetilde{C}_{F}(Z^{00}_{235})$, $\widetilde{C}_{F}(Z^{01}_{235})$, and $\widetilde{C}_{F}(Z^{01}_{245})$, counting all differentials except those from $\widetilde{C}_{F}(Z^{00}_{235})$ to $\widetilde{C}_{F}(Z^{01}_{245})$ }\label{d14}
\end{figure}

We see that the differentials included so far uniquely determine the differentials from $\widetilde{C}_{F}(Z^{00}_{235})$ to $\widetilde{C}_{F}(Z^{01}_{245})$ subject to $d^{2}=0$. These differentials are included in Figure \ref{d14}. After taking homology with respect to the $d_{00}^{f}$ differentials which preserve the underlying cycle, we see that the $d_{10}^{f}$ differentials induce an isomorphism between $\widetilde{H}_{F}(Z^{00}_{235})$ and $\widetilde{H}_{F}(Z^{01}_{245})$.

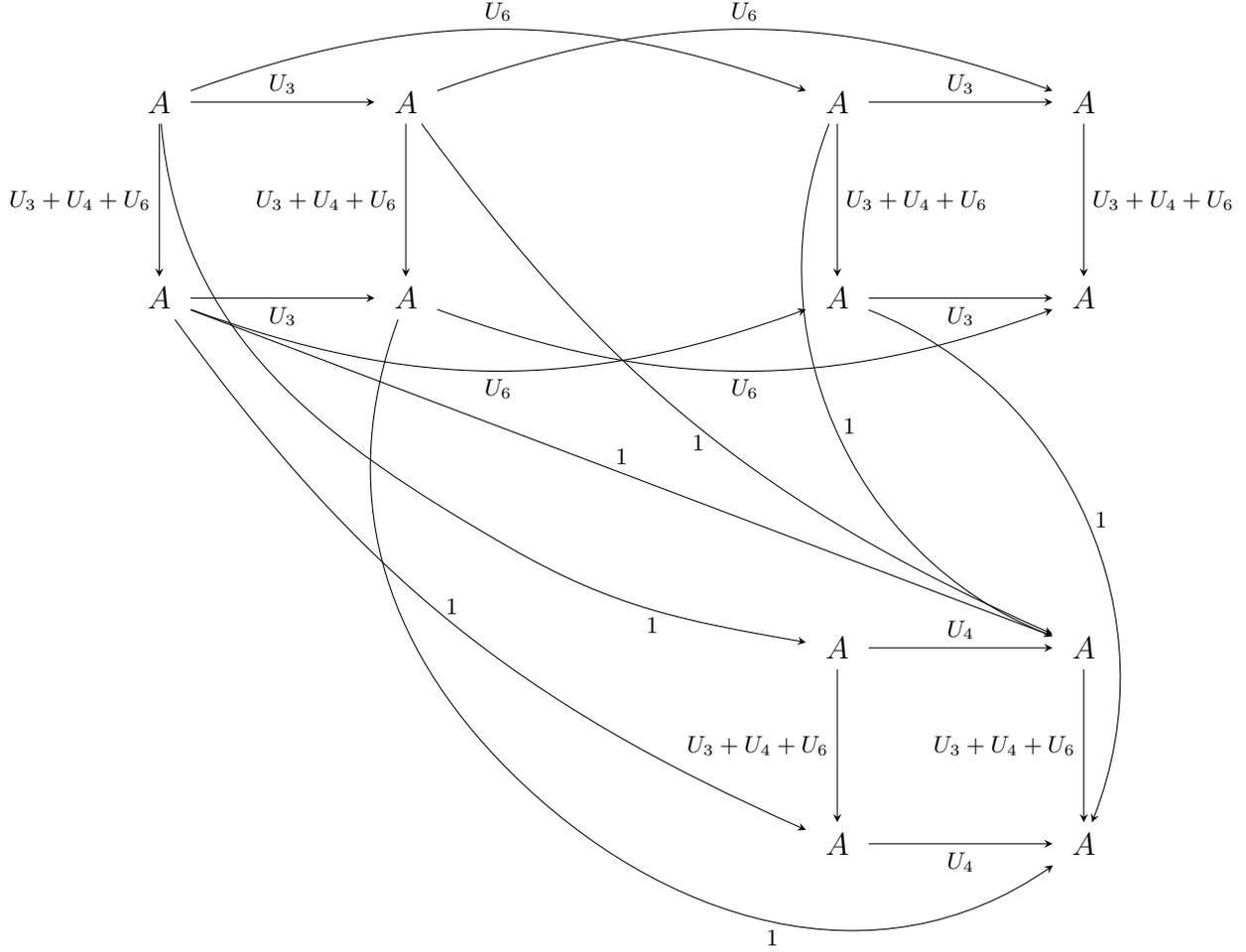
\begin{figure}

\centering
\begin{tikzpicture}
  \matrix (m) [matrix of math nodes,row sep=5em,column sep=6em,minimum width=2em] {
    A & A& & A & A \\
    A & A & &A & A \\
                &            \\
     & & & A& A \\
     & & & A & A \\};
     \scriptsize
  \path[-stealth]  
    (m-1-1) edge node [left] {$U_{3}+U_{4}+U_{6}$} (m-2-1)
            edge node [above] {$U_{3}$} (m-1-2)
            edge [bend left = 20] node [above] {$U_{6}$} (m-1-4)
    (m-2-1) edge node [below] {$U_{3}$} (m-2-2)
    	    edge [bend right = 20] node [below] {$U_{6}$} (m-2-4)
	    edge node [above] {$1$} (m-4-5)
	    edge [bend right = 15] node [above] {$1$} (m-5-4)
    (m-1-2) edge node [left] {$U_{3}+U_{4}+U_{6}$} (m-2-2)
    	    edge [bend left = 20] node [above] {$U_{6}$} (m-1-5)
	    edge [bend right = 15] node [below] {$1$} (m-4-5)
    (m-2-2) edge [bend right = 20] node [below] {$U_{6}$} (m-2-5)
    (m-1-4) edge node [right] {$U_{3}+U_{4}+U_{6}$} (m-2-4)
            edge node [above] {$U_{3}$} (m-1-5)
            edge [bend right = 45] node [right] {$1$} (m-4-5)
    (m-2-4) edge node [below] {$U_{3}$} (m-2-5)
    	    edge [bend left = 45] node [right] {$1$} (m-5-5)
    (m-1-5) edge node [right] {$U_{3}+U_{4}+U_{6}$} (m-2-5)
    (m-4-4) edge node [left] {$U_{3}+U_{4}+U_{6}$} (m-5-4)
            edge node [above] {$U_{4}$} (m-4-5)
    (m-5-4) edge node [below] {$U_{4}$} (m-5-5)
    (m-4-5) edge node [left] {$U_{3}+U_{4}+U_{6}$} (m-5-5);
    
     \draw [-stealth] (m-1-1) 
           to [out = 275, in = 150] (-1.5,-1) 
           to [out = 330, in = 170] node [below,midway]{$1$} (m-4-4); 
           
     \draw [-stealth] (m-2-2) 
           to [out = 250, in = 135] (-1.5,-4) 
           to [out = -45, in = 215] node[below,midway]{$1$} (m-5-5);

\end{tikzpicture}
\caption{Chain complex involving $\widetilde{C}_{F}(Z^{00}_{235})$, $\widetilde{C}_{F}(Z^{01}_{235})$, and $\widetilde{C}_{F}(Z^{01}_{245})$ with all differentials included }\label{d15}
\end{figure}

A similar argument counting the differentials between $\widetilde{C}_{F}(S_{1}^{01})$ and $\widetilde{C}_{F}(S_{1}^{11})$ shows that $d_{10}^{f}$ also induces an isomorphism from $\widetilde{H}_{F}(Z^{01}_{235})$ to $\widetilde{H}_{F}(Z^{11}_{245})$. This shows that the complex $(H_{*}(\widetilde{C}_{F}(S^{II+}_{a}), d_{00}^{f}), (d_{10}^{f})^{*})$ restricted to this connectivity is given in Figure \ref{d16}, which is clearly contractible. Thus, the homology with this connectivity is trivial.

\begin{figure}

\centering
\begin{tikzpicture}
  \matrix (m) [matrix of math nodes,row sep=4em,column sep=6em,minimum width=2em] {
     \widetilde{H}_{F}(Z^{00}_{235})& \widetilde{H}_{F}(Z^{01}_{235}) \\
     \widetilde{H}_{F}(Z^{01}_{245}) & \widetilde{H}_{F}(Z^{01}_{245}) \\};
     \scriptsize
  \path[-stealth]
    (m-1-1) edge node [right] {$\cong$} (m-2-1)
            edge node [above] {$U_{6}$} (m-1-2)
    (m-2-1) edge node [below] {$U_{1}$} (m-2-2)
    (m-1-2) edge node [right] {$\cong$} (m-2-2);
    
\end{tikzpicture}
\caption{The simplified chain complex for case 4 after taking $d_{00}^{f}$ differentials.}\label{d16}
\end{figure}

\noindent
\textbf{Case 5:} $A_{in}(Z)= \{2\}$, $A_{out}(Z)=\{2\}$. This case follows from the same argument as Case 2.

\noindent
\textbf{Case 6:} $A_{in}(Z)=A_{out}(Z)=\{1,2\}$. This connectivity has one cycle in $S^{II+}_{a}$ consisting of all six edges, which only appears in the complete resolution of $S^{II+}_{a}$ in which both inner crossings have been smoothed. This complete resolution lies in local cube grading $1$.

This completes the proof that $H_{*}(H_{*}(\widetilde{C}_{F}(D_{a}^{II+}), d^{f}_{00}), (d^{f}_{01})^{*})$ is concentrated in local cube grading $1$. Analogous case study arguments show that $H_{*}(H_{*}(\widetilde{C}_{F}(D_{a}^{II-}), d^{f}_{00}), (d^{f}_{01})^{*})$ also lies in local cube grading $1$, giving invariance under both braidlike Reidemeister II moves.

\subsection{Reidemeister III} As with the previous two moves, we will add basepoints to the diagram $D_{a}^{III}$ in every region in the complement of  $D_{a}^{III}$ except the regions involved in the Reidemeister III move  (see Figure \ref{R3basepoints}). Let $d_{ij}^{f}$ denote the component of $d_{ij}$ which preserves this basepoint filtration. Once again, it suffices to show that $H_{*}(H_{*}(\widetilde{C}_{F}(D_{a}^{III}), d^{f}_{00}), (d^{f}_{01})^{*})$ lies in a single local cube grading. 

\begin{figure}[h!]

\vspace{5mm}
 \centering
   \begin{overpic}[width=.3\textwidth]{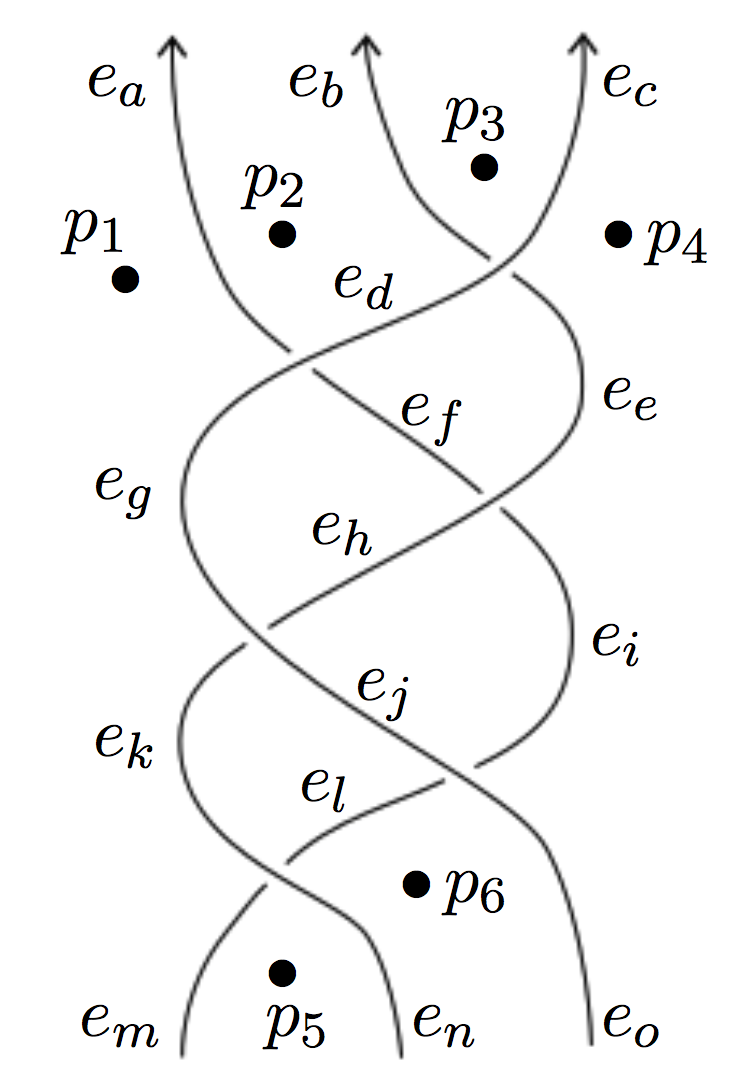}
   \end{overpic}
   \caption{The diagram $D_{a}^{III}$ with additional basepoints}\label{R3basepoints}
\end{figure}

As with the previous cases, let $S^{III}_{a}$ denote a partial resolution of $D_{a}^{III}$ such that all outer crossings are resolved, but the inner crossings are not. We will prove that for each outer cycle, the homology $H_{*}(H_{*}(\widetilde{C}_{F}(S^{III}_{a}), d^{f}_{00}), (d^{f}_{01})^{*})$ lies in the same local cube grading. For the Reidemeister III diagram, there are $20$ connectivities to consider, so it will be useful to have some algebraic lemmas to make computations go more quickly.

\begin{lem}[\hspace{1sp}\cite{Rasmussen}] \label{injective}
Given a (possibly singular) diagram $D$, for any edge $e_{i}$ in $D$, the multiplication map 

\[U_{i}: \widetilde{H}_{H}(D) \longrightarrow \widetilde{H}_{H}(D) \]

\noindent
on unreduced HOMFLY-PT homology is injective. This is true for the middle HOMFLY-PT homology as well. Additionally, as a triply-graded vector space, the homology of the mapping cone does not depend on the choice of edge $e_{i}$.
\end{lem}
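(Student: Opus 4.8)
The statement is due to Rasmussen, so the plan is to recall how the argument is organized and point to where the real input lies. Everything reduces to one claim: \emph{for every edge $e_i$ of $D$, multiplication by $U_i$ on $\widetilde{H}_{H}(D)$ is injective}. Given this, the mapping cone of $U_i$ has homology concentrated in a single grading shift, equal to $\widetilde{H}_{H}(D)/U_i\widetilde{H}_{H}(D)$; and from the short exact sequence $0\to C\xrightarrow{\,U_i\,}C\to C/U_iC\to 0$, where $C$ is the (unreduced) HOMFLY-PT complex --- free over $\mathbb{F}[U_i]$ in each homological degree --- the associated long exact sequence collapses, yielding $\widetilde{H}_{H}(D)/U_i\widetilde{H}_{H}(D)\cong\overline{H}_{H}(D)$, the theory reduced at $e_i$. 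Since reduced HOMFLY-PT homology is a link invariant independent of the reducing edge \cite{Rasmussen}, the mapping cone homology is independent of $e_i$. The same chain of implications applies verbatim to the middle theory.

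To prove the injectivity claim I would first invoke the cited fact that the multiplications by the variables of two edges lying on the same component of $D$ induce the same map on $\widetilde{H}_{H}(D)$. As the underlying graph of each component is connected, it then suffices to treat a single edge $e_i$ per component, and I would take $e_i$ to be an arc disjoint from every crossing --- for a braid closure, one of the closure arcs. For such an edge $U_i$ enters no quadratic relation at any vertex of any resolution: running the bivalent vertices along $e_i$ contributes only Koszul factors of the form $R\langle\xi\rangle\xrightarrow{\,U_i-U_i'\,}R$, whose homology is the polynomial ring in the one surviving variable. This is exactly the setup in which the unreduced complex splits, over $\mathbb{F}[U_i]$, as $\overline{C}\otimes_{\mathbb{F}}\mathbb{F}[U_i]$ with $\overline{C}$ the reduced complex, so that $\widetilde{H}_{H}(D)\cong\overline{H}_{H}(D)\otimes_{\mathbb{F}}\mathbb{F}[U_i]$ is free over $\mathbb{F}[U_i]$ and $U_i$ acts injectively.

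For the base case of a fully singular diagram $S$ one need not appeal to this: there the cube of resolutions has a single vertex, the MOY decomposition writes $\widetilde{H}_{H}(S)$ as a direct sum of shifted polynomial modules, and every edge variable acts on each summand by a regular element, hence injectively (this is the torsion-freeness of the $E_1$ page noted in the introduction). For a diagram with crossings one then either uses the splitting above or inducts on the number of crossings via the HOMFLY-PT skein exact triangle relating $D$, its oriented resolution, and its singularization --- all maps being $\mathbb{F}[U_i]$-linear since $e_i$ is fixed by the resolution --- and checks that these extensions do not create $U_i$-torsion.

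The genuine content is twofold. First, one must justify that for a crossingless marked edge the unreduced complex really does carry $\mathbb{F}[U_i]$ as a free tensor factor: this is essentially built into the passage from the reduced to the unreduced theory, but it requires keeping track of the bivalent-vertex relations so that the marked variable is cleanly separated from the crossing data, rather than merely being free in each homological degree (the homology of a bounded complex of free $\mathbb{F}[U_i]$-modules can otherwise acquire torsion). Second, and more substantively, both the reduction to one edge per component and the edge-independence of the mapping cone rest on Rasmussen's theorem that the reduced theory is insensitive to the choice of marked edge, whose proof is a case analysis over the Reidemeister moves; this is the step I expect to be the main obstacle.
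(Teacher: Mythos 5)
You should know at the outset that the paper contains no proof of Lemma~\ref{injective}: it is imported wholesale from \cite{Rasmussen} and used as a black box, so there is no in-paper argument to compare your sketch against. Judged on its own, your outline has the right architecture (reduce to one edge per component via the equal-action result, prove injectivity for a well-chosen edge, then identify the cone), but two of its load-bearing identifications fail. The homology of the cone of $U_i$ on the \emph{unreduced} theory is not $\overline{H}_{H}(D)$: for the unknot, $\widetilde{H}_{H}$ is free of rank $2$ over $\mathbb{F}[U]$, so the cone is $2$-dimensional, while $\overline{H}_{H}$ is $1$-dimensional. In the paper's terminology the cone is the unreduced theory ``reduced at $e_i$'' (a copy of $\widetilde{H}_{H}(D,1)$), which differs from the reduced theory by a further tensor factor; the same conflation of the unreduced and middle theories undermines your claimed splitting $\widetilde{C}\cong\overline{C}\otimes_{\mathbb{F}}\mathbb{F}[U_i]$.

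More importantly, the mechanism you propose for injectivity does not work as described. No edge of a closed (possibly singular) braid diagram has its variable absent from the differential: $U_i$ appears in the linear relation $L(v)$ at both endpoints of $e_i$, and in the quadratic relation $Q(v)$ whenever an endpoint is $4$-valent, so while the complex consists of free $\mathbb{F}[U_i]$-modules, its differential is not of the form $\bar d\otimes 1$, and freeness of the homology over $\mathbb{F}[U_i]$ is precisely the nontrivial content of Rasmussen's theorem rather than a consequence of choosing a crossingless edge. If you manufacture an edge with two bivalent endpoints via insertions, cancelling one of its two linear Koszul factors just identifies $U_i$ with a neighbouring edge variable, so injectivity for $U_i$ becomes injectivity for that variable --- circular. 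Your fallback induction on crossings via the skein triangle has exactly the gap you flag: the long exact sequence does not by itself exclude $U_i$-torsion in the extension (only the fully singular base case, via the MOY decomposition, is secure). Finally, the last clause of the lemma is asserted for possibly singular, hence possibly multi-component, diagrams; ``edges on the same component act equally'' cannot compare edges on different components, so the cross-component independence of the cone is again part of what is being cited from \cite{Rasmussen}, not something your reduction recovers.
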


\begin{lem}\label{easyhomlemma}

Suppose $(C, d_{1}+...+d_{k})$ is a finite-dimensional, filtered chain complex such that the total homology is trivial. Suppose also that the homology is concentrated in 3 adjacent gradings $C_{1}$, $C_{2}$, and $C_{3}$, with

\[dim(C_{2}) = dim(C_{1})+dim(C_{3}) \]

\noindent
Then $H_{*}(C, d_{1})=0$.

\end{lem}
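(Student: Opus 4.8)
The plan is to exploit the fact that a complex supported in only three adjacent gradings carries a very short differential, and then to play the vanishing of the total homology off against an Euler-characteristic count. First I would cut the differential down to two pieces. Order the three occupied gradings so that $C = C_{1}\oplus C_{2}\oplus C_{3}$, with $d_{j}$ the component of the differential that shifts the grading by $j$. Since only three consecutive gradings are occupied, $d_{j}=0$ for $j\ge 3$, so in fact $d = d_{1}+d_{2}$, where $d_{1}$ restricts to maps $C_{1}\to C_{2}$ and $C_{2}\to C_{3}$ while $d_{2}$ is a single map $C_{1}\to C_{3}$. Inspecting degrees, every cross-term and $d_{2}^{2}$ occurring in $d^{2}=0$ vanishes automatically, so the only relation forced is $d_{1}|_{C_{2}}\circ d_{1}|_{C_{1}}=0$; in particular $d_{2}$ is otherwise unconstrained.

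The next step is to pin down the middle homology. The map $d_{2}$ neither enters nor leaves $C_{2}$ inside the complex, since it would have to originate from the empty grading below $C_{1}$ or land in the empty grading above $C_{3}$. Hence the homology of $(C, d_{1}+d_{2})$ in the grading of $C_{2}$ is exactly $\ker(d_{1}|_{C_{2}})/\mathrm{im}(d_{1}|_{C_{1}})$, which is precisely the middle homology group of $(C, d_{1})$. Since the total homology of $(C, d_{1}+d_{2})$ vanishes by hypothesis, it follows that $H_{*}(C, d_{1})$ vanishes in the middle grading.

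Finally I would compare Euler characteristics. Because $C$ is finite-dimensional, the Euler characteristic of $(C, d_{1})$ equals that of its homology, so $\dim H_{1}(C,d_{1}) - \dim H_{2}(C,d_{1}) + \dim H_{3}(C,d_{1}) = \dim C_{1} - \dim C_{2} + \dim C_{3}$, and the right-hand side is $0$ by the hypothesis $\dim C_{2}=\dim C_{1}+\dim C_{3}$. Having already shown $H_{2}(C,d_{1})=0$, this reduces to $\dim H_{1}(C,d_{1}) + \dim H_{3}(C,d_{1}) = 0$, which forces both outer homology groups to vanish as well. Therefore $H_{*}(C,d_{1})=0$.

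I do not expect a genuine obstacle here: once the differential has been reduced to $d_{1}+d_{2}$, the argument is elementary linear algebra. The only points needing care are bookkeeping ones — fixing the convention for the direction in which $d_{j}$ shifts the three gradings, checking that the cross-terms in $d^{2}=0$ really are forced to be zero for degree reasons, and recalling that it is finite-dimensionality that licenses the Euler-characteristic comparison. If in the intended application the three gradings are themselves refined by a secondary grading or filtration, one simply runs the same argument on each refined piece.
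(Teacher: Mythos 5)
Your argument is correct and is essentially the paper's own proof: you first use that only the degree-one component of the differential can enter or leave the middle grading, so triviality of the total homology forces $H_{2}(C,d_{1})=0$, and then the dimension count $\dim C_{2}=\dim C_{1}+\dim C_{3}$ (an Euler-characteristic comparison, preserved under taking $d_{1}$-homology) kills the two outer gradings. The only cosmetic caveat is that the total differential $d_{1}+d_{2}$ is not graded, so ``the homology in the grading of $C_{2}$'' should be phrased as comparing graded components of total cycles and boundaries, which is what your reasoning in fact does.
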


\begin{proof}

The differential $d_{1}$ is the only one that can map into or out of $C_{2}$ non-trivially, so in order for the total homology to be trivial, $H_{*}(C, d_{1})$ must be trivial in grading $2$. But the equality 

\[dim(C_{2}) = dim(C_{1})+dim(C_{3}) \]

\noindent
is preserved when taking homology with respect to $d_{1}$, so $H_{*}(C, d_{1})$ must also be trivial in gradings $1$ and $3$.
\end{proof}

We can now begin our case studies.

\noindent
\textbf{Case 1:} $A_{in}(Z)=A_{out}(Z)= \emptyset$. In this case we get

\[H_{*}(H_{*}(\widetilde{C}(S^{III}_{a}, Z_{\emptyset}), d^{f}_{00}), d_{10}^{*})= \widetilde{H}_{H}(S^{III}_{a}-Z_{\emptyset}) \]
\noindent
We know from the invariance of HOMFLY-PT homology under braid-like Reidemeister moves that $\widetilde{H}_{H}(S^{III}_{a}-Z_{\emptyset})$ lies in a single local cube grading, because $S^{III}_{b}$ does. It lies in the grading where all the crossings are smoothed, since that is the sub-quotient complex where $S^{III}_{b}$ lies within the cube complex for $S^{III}_{a}$. Since there are 3 positive crossings and 3 negative crossings, the homology is concentrated in inner cube grading $3$.

\noindent
\textbf{Case 2:} $A_{in}(Z)=A_{out}(Z)= \{1\} $. In $S^{III}_{a}$, there is only one admissible cycle, given by $Z_{afilm}$. This cycle makes no turns, and it's complement consists of two strands interacting in a Reidemeister II move. If we label the crossings $c_{1}$ through $c_{6}$ from top to bottom, then the inner cube grading is given by 

\[ gr_{in} = \sum_{i=1}^{6} gr(c_{i}) \]

\noindent
where $gr(c_{i})$ is 0 for the 0-resolution and $1$ for the $1$-resolution. 

The two crossings corresponding to the Reidemeister II move are $c_{1}$ and $c_{4}$, so the homology is concentrated in $gr(c_{1})+gr(c_{4})=1$. For the remaining crossings, $Z_{afilm}$ is diagonal, so it only appears in the singularization. Hence, $gr(c_{2})=gr(c_{3})=0$ and $gr(c_{5})=gr(c_{6})=1$. Thus, the homology is concentrated in local cube grading $3$.

\noindent
\textbf{Case 3:} $A_{in}(Z)=\{2\}$, $A_{out}(Z)= \{1\} $. There are no admissible cycles with this connectivity.

\noindent
\textbf{Case 4:} $A_{in}(Z)=\{3\}$, $A_{out}(Z)= \{1\} $. There are no admissible cycles with this connectivity.

\noindent
\textbf{Case 5:} $A_{in}(Z)=\{1\}$, $A_{out}(Z)= \{2\} $.  There are two admissible cycles in $S^{III}_{a}$, given by $Z_{behkm}$ and $Z_{beilm}$, each of which makes one turn. Note that these two cycles agree for the first two edges, so the top two crossings aren't involved in the maps between them. The crossing $c_{1}$ is forced to be singularized, but $c_{2}$ can have either resolution. We will assume for now that it is singularized as well.

We now only look at the portion of the diagram containing the bottom four crossings, i.e. the diagram from $e_{g}$, $e_{f}$, and $e_{e}$ to $e_{m}$, $e_{n}$ and $e_{o}$. Note that this 4-crossing diagram can be transformed by isotopy to a 2-crossing diagram in which there are no cycles from $e_{e}$ to $e_{m}$. It follows that the \emph{total} homology of this complex is trivial, but we need to show that this is the case for the $E_{2}$ page.

Subject to our assumption that $gr(c_{2}) = 0$, the complex for $Z_{behkm}$ is nontrivial at $4$ vertices in the inner cube. The contributions and edge maps between them are shown in Figure \ref{d28}. The complex for $Z_{beilm}$ is also non-trivial at 4 vertices in the inner cube, shown in Figure \ref{d29}.

\begin{figure}

\centering
\begin{tikzpicture}
  \matrix (m) [matrix of math nodes,row sep=4em,column sep=6em,minimum width=2em] {
     \widetilde{H}_{F}(Z^{000100}_{behkm})& \widetilde{H}_{F}(Z^{000110}_{behkm}) \\
     \widetilde{H}_{F}(Z^{000101}_{behkm}) & \widetilde{H}_{F}(Z^{000111}_{behkm}) \\};
     \scriptsize
  \path[-stealth]
    (m-1-1) edge node [right] {$U_{n}$} (m-2-1)
            edge node [above] {$\Delta_{-}$} (m-1-2)
    (m-2-1) edge node [below] {$\Delta_{-}$} (m-2-2)
    (m-1-2) edge node [right] {$U_{n}$} (m-2-2);
    
\end{tikzpicture}
\caption{The chain complex for $Z_{behkm}$ with $gr(c_{2}) = 0$.}\label{d28}
\end{figure}

\begin{figure}

\centering
\begin{tikzpicture}
  \matrix (m) [matrix of math nodes,row sep=4em,column sep=6em,minimum width=2em] {
     \widetilde{H}_{F}(Z^{000011}_{beilm})& \widetilde{H}_{F}(Z^{000111}_{beilm}) \\
     \widetilde{H}_{F}(Z^{0001011}_{beilm}) & \widetilde{H}_{F}(Z^{001111}_{beilm}) \\};
     \scriptsize
  \path[-stealth]
    (m-1-1) edge node [right] {$U_{f}$} (m-2-1)
            edge node [above] {$\Delta_{-}$} (m-1-2)
    (m-2-1) edge node [below] {$\Delta_{-}$} (m-2-2)
    (m-1-2) edge node [right] {$U_{f}$} (m-2-2);
    
\end{tikzpicture}
\caption{The chain complex for $Z_{beilm}$ with $gr(c_{2}) = 0$.}\label{d29}
\end{figure}

Applying Lemma \ref{injective}, the total complex is given in Figure \ref{d30}, where $f_{1}$ and $f_{2}$ are edge maps that haven't been computed yet. Note that locally, both the $Z_{behkm}$ and $Z_{beilm}$ complexes are the once reduced HOMFLY-PT homology of a single negative crossing, and outside of the local picture they have the exact same diagram. Thus, the reduced complexes are isomorphic. 

\begin{figure}
\centering
\begin{tikzpicture}
  \matrix (m) [matrix of math nodes,row sep=4em,column sep=6em,minimum width=2em] {
     \widetilde{H}_{F}(Z^{000101}_{behkm})/U_{n}=0 & \widetilde{H}_{F}(Z^{000111}_{behkm})/U_{n}=0 \\
     \widetilde{H}_{F}(Z^{0001011}_{beilm})/U_{f}=0 & \widetilde{H}_{F}(Z^{001111}_{beilm})/U_{f}=0 \\};
     \scriptsize
  \path[-stealth]
    (m-1-1) edge node [right] {$f_{1}$} (m-2-1)
            edge node [above] {$\Delta_{-}$} (m-1-2)
    (m-2-1) edge node [below] {$\Delta_{-}$} (m-2-2)
    (m-1-2) edge node [right] {$f_{2}$} (m-2-2);
    
\end{tikzpicture}
\caption{The total complex with $gr(c_{2}) = 0$.}\label{d30}
\end{figure}

We therefore want to show that $f_{1}$ and $f_{2}$ induce this quasi-isomorphism. With respect to the triple grading $(M, A, gr_{cube})$, this quasi-isomorphism would be homogeneous of degree $(-1,0,1)$. If we examine each Alexander grading, the complex is finite-dimensional over $\Z_{2}$ - thus, it satisfies the conditions of Lemma \ref{easyhomlemma}. It follows that the homology is trivial.

The same arguments works for the complex with $gr(c_{2})=1$, proving that the $E_{2}$ page with this connectivity is trivial.

\noindent
\textbf{Case 6:} $A_{in}(Z)=\{2\}$, $A_{out}(Z)= \{2\} $. This case follows from the same argument as Case 2.

\noindent
\textbf{Case 7:} $A_{in}(Z)=\{2\}$, $A_{out}(Z)= \{3\} $. There are no admissible cycles with this connectivity.

\noindent
\textbf{Case 8:} $A_{in}(Z)=\{3\}$, $A_{out}(Z)= \{1\}. $ This case is the most complicated, as $D^{III}_{a}$ has five admissible cycles. They are $Z_{cdgkm}$, $Z_{cdgjlm}$, $Z_{cdfilm}$, $Z_{cehkm}$, and $Z_{ceilm}$. 

By the same argument as Case 5, the edge maps includes a quasi-isomorphism from $Z_{cehkm}$ to $Z_{ceilm}$. For the remaining cycles, we will simplify the complexes until we can apply Lemma \ref{easyhomlemma} to show that the homology is trivial.

The complement of the cycle $Z_{cdgkm}$ consists of 2 strands that make a Reidemeister II move at crossings $c_{3}$ and $c_{5}$. We can therefore make cancellations within this complex until the homology is concentrated at $gr(c_{3})=1$, $gr(c_{5})=0$. The complement of $Z_{cdgjlm}$ consists of two strands with a positive crossing, while the complement of $Z_{cdfilm}$ consists of two strands with a negative crossing. Each of these cycles makes at least one turn, causing the homology of the complement to be reduced. The cycle $Z_{cdgkm}$ actually makes two turns - we will choose the reducing map from $c_{6}$. The total complex after these cancelations is shown in Figure \ref{d31}.

\begin{figure}
\centering
\begin{tikzpicture}
  \matrix (m) [matrix of math nodes,row sep=4em,column sep=6em,minimum width=2em] {
     \widetilde{H}_{F}(Z^{001100}_{cdgkm})/U_{n}=0 & \widetilde{H}_{F}(Z^{001101}_{cdgkm})/U_{n}=0 \\
          \widetilde{H}_{F}(Z^{000111}_{cdgjlm})/U_{o}=0 & \widetilde{H}_{F}(Z^{001111}_{cdgjlm})/U_{o}=0 \\
     \widetilde{H}_{F}(Z^{010011}_{cdfilm})/U_{a}=0 & \widetilde{H}_{F}(Z^{010111}_{cdfilm})/U_{a}=0 \\};
     \scriptsize
  \path[-stealth]
    (m-1-1) edge node [right] {$f_{1}$} (m-2-1)
            edge node [above] {$0$} (m-1-2)
            edge [bend right = 450] node [left] {$f_{3}$} (m-3-1)
    (m-2-1) edge node [below] {$\Delta_{+}$} (m-2-2)
    (m-1-2) edge node [right] {$f_{2}$} (m-2-2)
    	    edge [bend left = 90] node [right] {$f_{4}$} (m-3-2)
    (m-3-1) edge node [below] {$\Delta_{-}$} (m-3-2)
    (m-2-1) edge node [below] {$f_{5}$} (m-3-2);
    
\end{tikzpicture}
\caption{The complex for cycles $Z_{cdgkm}$, $Z_{cdgjlm}$, and $Z_{cdfilm}$.}\label{d31}
\end{figure}
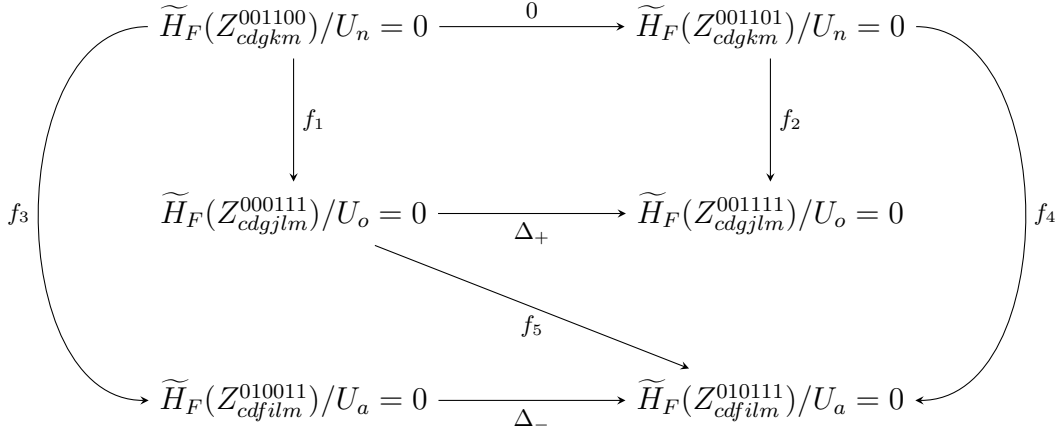

It is possible for $f_{2}$, $f_{3}$, and $f_{5}$ to be isomorphisms, as they map between isotopic diagrams and have the proper gradings. It follows that in each Alexander grading, this complex satisfies the conditions of Lemma \ref{easyhomlemma}, so the homology is trivial.

\begin{rem}

This complex corresponds to a categorification of the HOMFLY-PT skein relation

\[ aP_{H}(a,q,L_{+}) -a^{-1}P_{H}(a,q,L_{-})= (q-q^{-1})P_{H}(a,q,L_{0}) \]

\end{rem}

\noindent
\textbf{Case 9:} $A_{in}(Z)=\{3\}$, $A_{out}(Z)= \{2\} $. This case follows from the same argument as Case 5, with the only difference being that the crossings are positive instead of negative.

\noindent
\textbf{Case 10:} $A_{in}(Z)=\{3\}$, $A_{out}(Z)= \{3\} $. This case follows from the same argument as Case 2.

\noindent
\textbf{Case 11:} $A_{in}(Z)=A_{out}(Z)= \{1, 2\} $. Just as in Case 2, there is only one admissible cycle, namely the cycle defined by $Z^{C}_{cdgjo}$. For cases 11-19, it will be easiest notationally to refer to cycles by their complements, as the cycles themselves have so many edges. Observe that for these cases, the complements of all cycles will consist of a single strand, and are therefore isotopic. The arguments are therefore going to be simplified versions of the arguments in cases 2-10.

The cycle $Z^{C}_{cdgjo}$ only has contributions where $c_{1}$, $c_{2}$, $c_{4}$, and $c_{5}$ are singularized and $c_{3}$ and $c_{6}$ are smoothed. This vertex has local cube grading $3$.

\noindent
\textbf{Case 12:} $A_{in}(Z)=\{1, 2\}$, $A_{out}(Z)= \{1, 3\} $. There are no admissible cycles with this connectivity in $S^{III}_{a}$.

\noindent
\textbf{Case 13:} $A_{in}(Z)=\{1, 2\}$, $A_{out}(Z)= \{2, 3\} $. There are no admissible cycles with this connectivity in $S^{III}_{a}$.

\noindent
\textbf{Case 14:} $A_{in}(Z)=\{1, 3\}$, $A_{out}(Z)= \{1, 2\} $. There are two admissible cycles in $D^{III}_{a}$, given by $Z_{behjo}^{C}$ and $Z_{bdgjo}^{C}$, each making one turn. We know that the total homology is trivial, because there are no cycles with this connectivity in $D^{III}_{b}$. These $4$ contributions lie in $3$ adjacent cube gradings, so applying Lemma \ref{easyhomlemma} to each Alexander grading proves that the homology is trivial.

\noindent
\textbf{Case 15:} $A_{in}(Z)=A_{out}(Z)= \{1, 3\} $. This case follows from the same argument as Case 11.

\noindent
\textbf{Case 16:} $A_{in}(Z)=\{1, 3\}$, $A_{out}(Z)= \{2, 3\} $. There are no admissible cycles with this connectivity in $S^{III}_{a}$.

\noindent
\textbf{Case 17:} $A_{in}(Z)=\{2, 3\}$, $A_{out}(Z)= \{1, 2\} $. There are three admissible cycles with this connectivity - $Z_{afio}^{C}$, $Z_{afhjo}^{C}$, and $Z_{agjo}^{C}$, making one turn, two turns, and one turn, respectively. Using one turn from each complex, we can reduce each homology so that it is concentrated in $3$ adjacent cube gradings. We can then apply Lemma \ref{easyhomlemma} to each Alexander grading, proving that the homology is trivial.

\noindent
\textbf{Case 18:} $A_{in}(Z)=\{2, 3\}$, $A_{out}(Z)= \{1, 3\} $. This case follows from the same argument as Case 14.

\noindent
\textbf{Case 19:} $A_{in}(Z)=A_{out}(Z)= \{2, 3\} $. This case follows from the same argument as Case 11.

\noindent
\textbf{Case 20:} $A_{in}(Z)=A_{out}(Z)= \{1, 2, 3\} $. The only cycle with this connectivity consists of all of the edges in the local tangle. This cycle only contributes when all $6$ crossings are smoothed, which falls in local cube grading $3$.

We have shown that $H_{*}(H_{*}(\widetilde{C}_{F}(S^{III}_{a}), d^{f}_{00}), (d^{f}_{01})^{*})$ lies in local cube grading $3$, proving Reidemeister III invariance.

\subsection{Gradings}

We have shown that for all $k \ge 2$, with respect to the cube grading,

\[ E_{k}(\widetilde{C}_{F}(D_{a}^{I+})\{-1\} \cong E_{k}(\widetilde{C}_{F}(D_{b}^{I+}) \]

\[ E_{k}(\widetilde{C}_{F}(D_{a}^{I-})\{-1\} \cong E_{k}(\widetilde{C}_{F}(D_{b}^{I-}) \]

\[ E_{k}(\widetilde{C}_{F}(D_{a}^{II+})\{-1\} \cong E_{k}(\widetilde{C}_{F}(D_{b}^{II-}) \]

\[ E_{k}(\widetilde{C}_{F}(D_{a}^{II-})\{-1\} \cong E_{k}(\widetilde{C}_{F}(D_{b}^{II-}) \]

\[ E_{k}(\widetilde{C}_{F}(D_{a}^{III})\{-3\} \cong E_{k}(\widetilde{C}_{F}(D_{b}^{III}) \]

It's not hard to see that adding in the overall grading shift of $\frac{1}{2}(-c(D)-b(D))$ turns all of these grading shifts between the $D_{a}$ and $D_{b}$ complexes into $0$. Thus, for all $k \ge 2$,

\[E_{k}(\widetilde{C}_{F}(D)\{\frac{1}{2}(-c(D)-b(D))\} \]

\noindent
gives a graded invariant with respect to the cube grading. To see that the Maslov and Alexander gradings are also invariant under the braid-like Reidemeister moves, we note that the above isomorphisms are induced by Heegaard moves, and the corresponding maps are known to preserve these two gradings. This proves that 

\[E_{k}(\widetilde{C}_{F}(D)\{\frac{1}{2}(-c(D)-b(D))\} \]

\noindent
is a triply graded invariant. With regards to the graded Euler characteristic, we have already shown in Section \ref{Euler} that with respect to the triple grading $(-gr_{q}+gr_{h}, -gr_{h}, gr_{v})$,  

\[
 \sum_{i,j,k} (-1)^{(k-j)/2}q^{i}a^{j} dim(E_{2}^{i,j,k}(\widetilde{C}_{F}(D))) = P_{H}(a,q,D)
\]

\noindent
Note that with the cube grading shifted by $\{\frac{1}{2}(-c(D)-b(D))\}$, the vertical grading is exactly twice the cube grading.

\subsection{The Middle and Reduced Versions} 

We will finish by proving that the middle and reduced versions give well-defined invariants as well. We will continue to use the triple grading $(M,A,gr_{cube})$, but for ease of exposition, we will from now on assume that the cube grading in all three complexes is shifted by $\frac{1}{2}(-c(D)-b(D))$.

\subsubsection{The Middle Theory} To prove invariance of $C_{F}(D)$, we must show that the $E_{2}$ and higher pages do not depend on the choice of decorated bivalent vertex, and that it is invariant under braidlike Reidemeister moves. We will relax the assumption that the decorated edge is leftmost in the braid, even though this means that $H_{*}(C_{F}(S), d_{0}^{f})$ is not necessarily isomorphic to $H_{*}(C_{F}(S), d_{0})$. 

Let $D$ be a diagram for a knot $K$ with a decorated bivalent vertex $v$ and $v_{2}$. Let $HD_{1}$ denote the Heegaard diagram for this decorated braid diagram. Since we will be dealing with multiple Heegaard diagrams that correspond to the same knot projection $D$, we will define $C_{F}(HD)$ to be the chain complex associated to the Heegaard Diagram $HD$.

We can alter this diagram by adding an unknot to the left of the braid whose $U$ variable is set equal to zero. The local diagram for this unknot is shown in Figure \ref{HDUnknot}. Let $HD_{2}$ denote this modified diagram. It is well known that this modification doubles the homology, as the differential from $y$ to $x$ is zero.

\begin{figure}[h!]
 \centering
   \begin{overpic}[width=.15\textwidth]{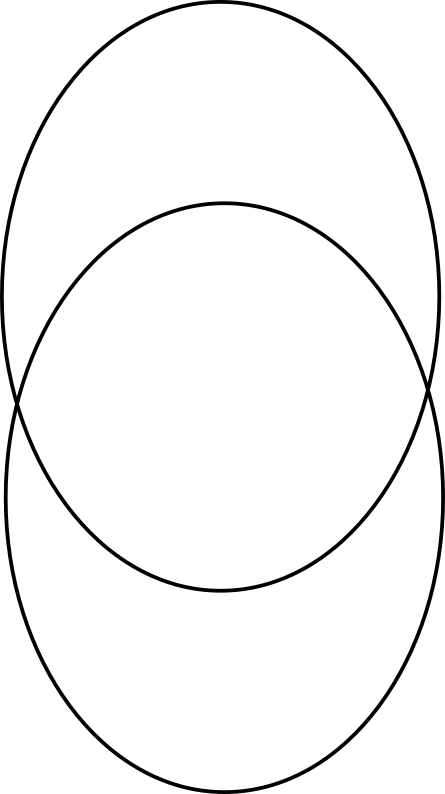}
   \put(20,48){$X$}
   \put(29,48){$O$}
   \put(0,47){$\bullet$}
   \put(-5,47){$x$}
   \put(56,48.5){$y$}
   \put(51.5,48.5){$\bullet$}
   \put(5,94){$\alpha$}
   \put(5,0){$\beta$}
   \end{overpic}
   \caption{The Heegaard diagram for the extra unknot to the left of the braid} \label{HDUnknot}
\end{figure}

With regards to the triple grading, 

\[C_{F}(HD_{2}) \cong C_{F}(HD_{1}) \otimes V \]

\noindent
where $V = \Z_{2}\{-\frac{1}{2},0,0\} \oplus \Z_{2}\{\frac{1}{2},0,0\}$ with respect to the triple grading. We now note that the Heegaard diagram $HD_{2}$ can be connected to a Heegaard diagram $HD_{3}$ in which the marking has been moved to this unknot via moves satisfying the conditions of Lemma \ref{heegaardmoves}. In particular, we handleslide $\alpha_{0}$ over all other $\alpha$ curves which aren't bounded by another $\alpha$ curve, and similarly for $\beta_{0}$. Thus, $C_{F}(HD_{2})$ is filtered chain homotopy equivalent to $C_{F}(HD_{3})$. But by definition, $C_{F}(HD_{3}) = \widetilde{C}_{F}(D)$. 

This proves that $C_{F}(HD_{1}) \otimes V$ is filtered chain homotopy equivalent to $\widetilde{C}_{F}(D)$. We have already shown that $\widetilde{C}_{F}(D)$ is an invariant of braids, and $C_{F}(HD_{1})$ is uniquely determined by $C_{F}(HD_{1}) \otimes V$. This proves that $C_{F}(D)$ is an invariant of braids and that it does not depend on the choice of marking $v$.

\subsubsection{The Reduced Theory} The reduced complex $\overline{C}_{F}(D)$ is defined as follows:

\[ \overline{C}_{F}(D)= C_{F}(D)\{-1,-1,-2\} \xrightarrow{\hspace{3mm}U_{i}\hspace{3mm}}C_{F}(D)\{0,0,0\} \]

\noindent
for some edge $e_{i}$ in $D$. For $\overline{C}_{F}(D)$ to give a well-defined homology theory, we need to show that the $E_{2}$ page does not depend on the choice of edge $e_{i}$. The standard argument is to show that edges which are diagonal at a crossing give the same action. Traversing the knot, this shows that multiplication by any two edges is the same.

\begin{lem} \label{4.12}

Let $e_{m}$ and $e_{n}$ be edges which meet diagonally at a crossing $c$ (for example, edges $e_{1}$ and $e_{4}$ in Figure \ref{HDCrossing}). Then multiplication by $U_{m}$ and $U_{n}$ is the same on $E_{2}(C_{F}(D))$.

\end{lem}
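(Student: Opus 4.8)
The plan is to reduce the statement to the unreduced complex $\widetilde{C}_{F}(D)$ and then read it off the description of the $E_{2}$ page obtained in Section~\ref{filterededgemaps}. First I would observe that, exactly as in the proof that the middle theory is well defined, a sequence of handleslides of the type permitted by Lemma~\ref{heegaardmoves} produces an $R$-linear filtered chain homotopy equivalence $C_{F}(D)\otimes V\simeq\widetilde{C}_{F}(D)$, where $V$ is two-dimensional with zero differential, trivial $U$-action, and supported in a single cube grading. Since a filtered chain homotopy equivalence induces isomorphisms on every page of the associated spectral sequence, $E_{2}(C_{F}(D))\otimes V\cong E_{2}(\widetilde{C}_{F}(D))$ as $R$-modules, and it is enough to show that $U_{m}$ and $U_{n}$ act identically on $E_{2}(\widetilde{C}_{F}(D))$.

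Next I would invoke the decomposition
\[ E_{2}(\widetilde{C}_{F}(D))\ \cong\ \widetilde{H}_{H}(D)\ \oplus\ \bigoplus_{\substack{Z\ \text{nonempty}\\ Z\ \text{admissible}}}\widetilde{H}_{H}(D-Z,\,n(Z)) \]
from Section~\ref{filterededgemaps} and check the equality of $U_{m}$ and $U_{n}$ on each summand, using throughout that the two edges $e_{m},e_{n}$ meeting diagonally at $c$ are precisely the two edges of one strand of $D$ passing through $c$. On the $\widetilde{H}_{H}(D)$ summand, $e_{m}$ and $e_{n}$ lie on a common component of $D$, so $U_{m}=U_{n}$ there by Rasmussen's theorem \cite{Rasmussen} that edge multiplications on a common component of a (possibly singular) diagram agree. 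If $Z$ is a nonempty admissible cycle containing neither $e_{m}$ nor $e_{n}$, then at $c$ the cycle either runs straight through on the remaining two edges or avoids $c$ entirely, and in either case $e_{m}$ and $e_{n}$ remain joined through $c$ in the diagram $D-Z$; so the same theorem gives $U_{m}=U_{n}$ on $\widetilde{H}_{H}(D-Z,n(Z))$, hence on that summand.

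The remaining summands — those for which $Z$ uses $e_{m}$ or $e_{n}$ at $c$ — are where I expect the main obstacle to be, and I would handle them by a short case analysis. Take $c$ positive, with $e_{m}=e_{1}$ and $e_{n}=e_{4}$ in the notation of the lemma; the negative case is identical after exchanging the roles of $Z_{13}$ and $Z_{24}$. If $Z$ contains $e_{1}$ but not $e_{4}$ at $c$ then $Z$ is locally the left turn $Z_{13}$, which is inadmissible, so no such summand occurs. If $Z$ contains $e_{4}$ but not $e_{1}$ then $Z$ is locally the right turn $Z_{24}$: the Koszul factor $R\xrightarrow{U_{4}}R$ coming from $e_{4}\in Z$ forces $U_{4}=0$ on $\widetilde{H}_{H}(D-Z,n(Z))$, while by Table~\ref{table2} the turn at $c$ reduces $D-Z$ at the edge $e_{1}$, which (using Lemma~\ref{injective} for injectivity of that multiplication) forces $U_{1}=0$ as well. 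Finally, if $Z$ contains both $e_{1}$ and $e_{4}$ at $c$, the two Koszul factors force $U_{1}=U_{4}=0$. In every case $U_{m}$ and $U_{n}$ induce the same endomorphism of the summand, so they agree on $E_{2}(\widetilde{C}_{F}(D))$. The delicate point is exactly this last step: one must verify that admissibility together with the edge-map table leaves no cycle-summand that uses precisely one of $e_{m},e_{n}$ and contributes to $E_{2}$, so that no asymmetric contribution to the $U$-action can survive.
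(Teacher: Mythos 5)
You reduce to $\widetilde{C}_F(D)$ correctly (the tensor-with-$V$ trick from the middle-theory argument is $R$-linear and filtered, so it does identify the $U$-actions on the $E_2$ pages), and your summand-by-summand computation is essentially right at the level at which it applies. The genuine gap is in what that level is. The decomposition you quote is established in Section \ref{filterededgemaps} only for the basepoint-filtered object
\[
H_{*}\bigl(H_{*}(\widetilde{C}_{F}(D), d_{0}^{f}), (d_{1}^{f})^{*}\bigr)\ \cong\ \widetilde{H}_{H}(D)\ \oplus\ \bigoplus_{Z}\ \widetilde{H}_{H}(D-Z,\,n(Z)),
\]
not for $E_{2}(\widetilde{C}_{F}(D))$, which is the homology with respect to the full differentials $d_{0}$ and $d_{1}^{*}$. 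The two are related only through the spectral sequence of the basepoint filtration: $E_{2}$ is a subquotient carrying an induced filtration, and the cycle summands are not $U$-invariant subspaces of $E_{2}$. So even granting that $U_{m}+U_{n}$ annihilates every summand --- which your case analysis does show, using Rasmussen's same-component theorem, the Koszul factors for edges in $Z$, and the reducing edge maps of Figure \ref{table2} --- you may only conclude that $U_{m}+U_{n}$ is a filtered endomorphism of $E_{2}$ that vanishes on the associated graded, i.e.\ strictly shifts the basepoint filtration; that is strictly weaker than being zero. The paper itself flags exactly this phenomenon in Section 5: multiplication by $U$ can carry classes supported on nonempty cycles into the empty-cycle level (the ``filling in''), which is why the comparison with $H_{H}(D)$ there is only an additive splitting losing the $q$-grading, extracted from rank considerations rather than from the summand-wise module structure.

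The paper's proof takes a different route that avoids the cycle decomposition altogether: order the crossings so that $c_{1}=c$, give $c_{1}$ weight $0$ and all other crossings weight $1$, and use the resulting filtration to reduce to proving $U_{m}=U_{n}$ on $H_{*}(H_{*}(C_{F}(D),d_{0}),d_{1}(1)^{*})$; this splits over partial resolutions $S$ in which only $c$ remains a crossing, and by Lemma \ref{4.13} a one-crossing cube has $E_{2}$ equal to its total homology, i.e.\ to the ordinary knot Floer homology of $S$, where equality of the two diagonal edge actions is already known. If you want to salvage your approach, you would need a statement about the honest complex --- for instance a filtered chain homotopy between $U_{m}$ and $U_{n}$ on $\widetilde{C}_{F}(D)$, or a degeneration argument identifying $E_{2}$ with the basepoint-filtered homology together with control of the induced $U$-action --- rather than a computation on the basepoint-associated graded alone.
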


We will use the following lemma in the proof:

\begin{lem} \label{4.13}

Suppose $(C,d)$ is a graded complex that lies in 2 adjacent gradings. Let $d_{i}$ denote the component of $d$ which increases grading by $i$, so $d=d_{0}+d_{1}$. Then 

\[ H_{*}(C, d_{0}+d_{1}) \cong H_{*}(H_{*}(C, d_{0}), d_{1}^{*}) \]

\end{lem}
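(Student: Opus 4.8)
The plan is to identify $(C,d)$ with a mapping cone and run the long exact sequence of a two-step filtration, the hypothesis that the grading takes only two values doing essentially all of the work.

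First I would relabel the two adjacent gradings as $0$ and $1$ and write $C = C_{0} \oplus C_{1}$ for the splitting into graded pieces. Since there is no grading $2$, the component $d_{1}$ vanishes on $C_{1}$, so $(C_{1}, d_{0})$ is a subcomplex of $(C,d)$ and the quotient complex is $(C_{0}, d_{0})$. This yields a short exact sequence of complexes
\[ 0 \longrightarrow (C_{1}, d_{0}) \xrightarrow{\ \iota\ } (C,d) \xrightarrow{\ \pi\ } (C_{0}, d_{0}) \longrightarrow 0 . \]
Next I would pass to the induced long exact sequence in homology and identify its connecting homomorphism $\partial \colon H_{*}(C_{0}, d_{0}) \to H_{*}(C_{1},d_{0})$. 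Given a $d_{0}$-cycle $z \in C_{0}$, viewing $z$ as an element of $C$ and applying $d = d_{0} + d_{1}$ produces $d_{1}z \in C_{1}$ (the term $d_{0}z$ being zero), so $\partial[z] = [d_{1}z]$; that is, $\partial$ is exactly the induced map $d_{1}^{*}$ of the statement. Breaking the long exact sequence into short exact sequences then presents $H_{n}(C,d)$, in each homological degree, as an extension of $\ker d_{1}^{*}$ by $\mathrm{coker}\, d_{1}^{*}$; working over $\Z_{2}$ this extension splits, so $H_{n}(C,d) \cong \ker d_{1}^{*} \oplus \mathrm{coker}\, d_{1}^{*}$, with the two summands lying in gradings $0$ and $1$ respectively.

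Finally I would compute the right-hand side directly: because $d_{0}$ respects the grading, $H_{*}(C,d_{0}) = H_{*}(C_{0},d_{0}) \oplus H_{*}(C_{1},d_{0})$, and the induced differential is precisely the two-term map $d_{1}^{*}$ from the grading-$0$ summand to the grading-$1$ summand, whose homology is $\ker d_{1}^{*} \oplus \mathrm{coker}\, d_{1}^{*}$. Comparing with the previous step, grading-by-grading and degree-by-degree, gives the asserted isomorphism. (Equivalently, this is just the spectral sequence of the two-step filtration of $C$ by the grading: its $E_{1}$ page is $H_{*}(C,d_{0})$ with differential $d_{1}^{*}$, and because the filtration has length two every higher differential $d_{r}$ with $r \ge 2$ shifts the grading out of range and hence vanishes, so $E_{2} = E_{\infty}$ and the sequence converges to $H_{*}(C,d)$.)

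There is little real obstacle here: the statement is almost pure bookkeeping. The only points needing care are the identification of the connecting map with $d_{1}^{*}$ and the observation that the hypothesis of only two adjacent gradings is exactly what makes $C_{1}$ a subcomplex, kills the higher differentials, and reduces the extension problem to a split two-term one.
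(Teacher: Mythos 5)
Your argument is correct, but it runs along a different track than the paper's. The paper disposes of this lemma in two sentences: the grading induces a two-step filtration, a complex concentrated in two adjacent gradings admits no induced differentials of length $\ge 2$, so the associated spectral sequence collapses at $E_{2}$, which is exactly $H_{*}(H_{*}(C,d_{0}),d_{1}^{*})$. You instead take the short exact sequence $0 \to (C_{1},d_{0}) \to (C,d) \to (C_{0},d_{0}) \to 0$, identify the connecting homomorphism with $d_{1}^{*}$ via the zig-zag (which is right: $d^{2}=0$ forces $d_{0}d_{1}+d_{1}d_{0}=0$, so $d_{1}$ descends to $d_{0}$-homology and the connecting map is this induced map), and then split the resulting extension of $\ker d_{1}^{*}$ by $\operatorname{coker} d_{1}^{*}$ over $\Z_{2}$. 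What your route buys is that it makes explicit the one point the paper's one-liner leaves implicit: collapse at $E_{2}$ only identifies $E_{2}$ with the associated graded of $H_{*}(C,d)$, and passing from the associated graded to $H_{*}(C,d)$ itself is precisely the extension problem you resolve by working over the field $\Z_{2}$ (so the isomorphism is one of vector spaces, respecting the remaining gradings, not a priori of $R$-modules -- a caveat that applies equally to the paper's phrasing). What the paper's route buys is brevity, and your closing parenthetical shows you see that the two arguments are the same filtration viewed through different machinery. One small imprecision: $H_{*}(C,d)$ does not carry the $0/1$ grading itself, only the induced two-step filtration, so your summands ``lying in gradings $0$ and $1$'' should be read as a statement about the associated graded; this does not affect the proof.
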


\begin{proof}

Since the complex lies in two adjacent gradings, there cannot be any induced differentials of length more than $1$. It follows that the spectral sequence induced by the grading must collapse at the $E_{2}$ page.

\end{proof}

\begin{proof}[Proof of Lemma \ref{4.12}]

This is known to be true for standard knot Floer homology, a fact that we will use here. Choose an ordering of the crossings $c_{1},...,c_{k}$ such that $c_{1}=c$. Define $C_{1}$ to be the homology 

\[ H_{*}(C_{F}(D), d_{0}) \]

\noindent
and let $d_{1}(j)$ denote the component of the induced edge map coming from the crossing $c_{j}$. Then, 

\[E_{2}(C_{F}(D)) = H_{*}(C_{1}, d_{1}(1)+...+d_{1}(k)) \]

We can apply different weights to the crossings to induce a filtration on this differential. We will choose to give $c_{1}$ weight $0$ and all other crossings weight $1$. In the corresponding filtration, $d_{1}(1)$ preserves filtration level while $d_{1}(j)$ increases filtration level for $j>1$. This induces a spectral sequence converging to $H_{*}(C_{1}, d_{1}(1)+...+d_{1}(k))$ whose $E_{1}$ page is $H_{*}(C_{1}, d_{1}(1))$. It suffices to show that multiplication by $U_{m}$ and $U_{n}$ are the same on $H_{*}(C_{1}, d_{1}(1))$.

The complex $H_{*}(C_{1}, d_{1}(1))$ preserves the cube filtration coming from crossings $c_{2},...,c_{k}$. Let $S$ denote a partial resolution of $D$ in which all crossings but $c_{1}$ are resolved, but $c_{1}$ remains a crossing. Then $H_{*}(C_{1}, d_{1}(1))$ decomposes as a direct sum over all such partial resolutions $S$. Let $S_{0}$ be the complete resolution in which $c_{1}$ has the 0-resolution and $S_{1}$ the 1-resolution. The complex $(C_{1}, d_{1}(1))$ can be written 

\[  \bigoplus_{S} H_{F}(S_{0}) \xrightarrow{d_{1}(1)} H_{F}(S_{1}) \]

\noindent
But applying Lemma \ref{4.13}, the homology of each of these summands is the \emph{total} homology of $S$, on which $U_{m}$ and $U_{n}$ are known to have the same multiplication.

\end{proof}

Thus, the reduced theory does not depend on the edge at which it is reduced. This also shows that $E_{2}(C_{F}(D))$ is a finitely generated (though not necessarily free) $\Z_{2}[U]$-module, and the reduced theory is finite-dimensional over $\Z_{2}$.

\begin{rem}

The proof of Lemma \ref{4.12} can be summarized as the fact that a cube filtration can't do anything interesting when you're only looking at one crossing, as the $E_{2}$ page will be the same as the total homology.

\end{rem}

This proves that the unreduced, middle, and reduced theories all give will defined invariants, which are analogous to the unreduced, middle, and HOMFLY-PT homologies. We will explore some explicit relationships with the HOMFLY-PT homologies in the next section.

\section{Relationship with HOMFLY-PT homology}

Let $D$ be a braid diagram for a knot $K$. On both knot Floer homology and HOMFLY-PT homology, the $U_{i}$ multiplication maps are all the same, making them $\Z_{2}[U]$-modules. $\mathit{HFK}^{-}(K)$ is known to be of the form

\[ \Z_{2}[U] \oplus \text{\emph{Torsion}} \]

\noindent
where the torsion is killed by some power of $U$. For HOMFLY-PT homology, on the other hand, $H_{H}(K)$ is a free $\Z_{2}[U]$-module of finite rank. In particular, the middle theory is just the reduced theory with a variable adjoined. 

\begin{thm}

Let $T$ be the torsion part of $E_{2}(C_{F}(D))$. Then

\[ E_{2}(C_{F}(D)) \cong H_{H}(K) \oplus T \]

\noindent
and similarly for the unreduced theory. This isomorphism preserves the horizontal grading  $gr_{h}$ and the vertical grading $gr_{v}$, but not necessarily the $q$-grading.

\end{thm}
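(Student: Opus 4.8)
The plan is to compute the $U$-localization of $E_{2}$ and read off the free part. Work first with the unreduced complex $\widetilde{C}_{F}(D)$. Recall from Section~4 that $E_{2}(C_{F}(D))$ is a finitely generated $\Z_{2}[U]$-module, hence so is $E_{2}(\widetilde{C}_{F}(D))\cong E_{2}(C_{F}(D))\otimes V$; by the structure theorem over the PID $\Z_{2}[U]$ it splits as $F\oplus T$ with $F$ free and $T$ its $U$-torsion part. Multiplication by $U$ has degree $0$ in $gr_{h}$ and in $gr_{v}$ (and degree $2$ in $gr_{q}$), so the isomorphism type of a graded free $\Z_{2}[U]$-module in the bigrading $(gr_{h},gr_{v})$ is recorded faithfully by its localization $-\otimes_{\Z_{2}[U]}\Z_{2}[U^{\pm1}]$. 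Since $T$ dies under localization, it therefore suffices to produce a $(gr_{h},gr_{v})$-graded isomorphism
\[
E_{2}(\widetilde{C}_{F}(D))\otimes_{\Z_{2}[U]}\Z_{2}[U^{\pm1}]\;\cong\;\widetilde{H}_{H}(D)\otimes_{\Z_{2}[U]}\Z_{2}[U^{\pm1}],
\]
the discrepancy in $gr_{q}$ being the shift already identified in Section~\ref{Euler}.

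To get this, invert all the $U_{i}$ on the chain level: localization is exact, so it commutes with every page of the cube spectral sequence, and since the $\Z_{2}[U_{1},\dots,U_{n}]$-action on $E_{2}$ factors through $\Z_{2}[U]$ (Lemma~\ref{4.12} and its analogue for $\widetilde{C}_{F}$), inverting all the $U_{i}$ agrees with inverting $U$ on $E_{2}$. Now use the cycle decomposition together with Theorems~\ref{thm2.4} and~\ref{thm2.5}: the cube $E_{2}$-page of $\widetilde{C}_{F}(D)$ is the abutment of the basepoint spectral sequence with $E_{1}$-page $H_{*}(H_{*}(\widetilde{C}_{F}(D),d_{0}^{f}),(d_{1}^{f})^{*})\cong\widetilde{H}_{H}(D)\oplus\bigoplus_{Z\text{ nonempty, admissible}}\widetilde{H}_{H}(D-Z,n(Z))$. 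Each summand $\widetilde{H}_{H}(D-Z,n(Z))$ is HOMFLY-PT homology reduced at a positive number of edges, hence is $U$-torsion; so after inverting the $U_{i}$ the $E_{1}$-page becomes just $\widetilde{H}_{H}(D)$, which lies at one extreme of the basepoint filtration (the empty cycle uses none of the outer arcs). The basepoint spectral sequence differentials strictly change this filtration degree, so they vanish on $\widetilde{H}_{H}(D)$ once the other summands are gone, and the localized spectral sequence degenerates. This yields the displayed isomorphism, and with the first paragraph, $E_{2}(\widetilde{C}_{F}(D))\cong\widetilde{H}_{H}(D)\oplus T$. (Equivalently, at the chain level: after localization the nonempty-cycle summands form a $d$-acyclic subquotient, so $\widetilde{C}_{F}(D)_{\mathrm{loc}}$ has the same cube spectral sequence as its empty-cycle part, which by Lemma~\ref{bigthm} and the $Z_{\phi}$ row of Figure~\ref{table2} is the localized HOMFLY-PT cube of resolutions.) Finally, transfer to the middle theory through the filtered chain homotopy equivalence $\widetilde{C}_{F}(D)\simeq C_{F}(D)\otimes V$ and the parallel relation $\widetilde{H}_{H}(D)\cong H_{H}(K)\otimes V$, cancelling the fixed $2$-dimensional factor $V$; the reduced statement then follows by tensoring with $R\xrightarrow{U_{i}}R$.

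The main obstacle is making the localization step rigorous: the cycle decomposition is preserved only by $d^{f}$, not by the full differential, so one cannot simply discard the nonempty-cycle summands. The point is that the empty cycle sits at an extreme level of the basepoint filtration and the $U_{i}$-localized nonempty summands are acyclic for $d^{f}$; combining these, the nonempty part becomes a genuinely $d$-acyclic subquotient of $\widetilde{C}_{F}(D)_{\mathrm{loc}}$, so passing to the empty-cycle part is a quasi-isomorphism on every page of the cube filtration. Everything else — identifying the empty-cycle part with the HOMFLY-PT complex via the edge-map table, tracking the $gr_{q}$-shift, and descending to the middle and reduced statements — is bookkeeping with the tools already developed.
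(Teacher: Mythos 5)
Your proposal is correct and follows essentially the same route as the paper: both arguments rest on the cycle decomposition of the basepoint-filtered homology, with the empty cycle contributing the free module $H_{H}(D)$ and every nonempty admissible cycle contributing a reduced (hence $U$-torsion, in fact finite-dimensional) summand, so that the remaining differentials cannot change the free part in each $(gr_{h},gr_{v})$-bigrading. Your inversion of the $U_{i}$ (using Lemma \ref{4.12} to identify this with inverting $U$ on $E_{2}$) is just a clean repackaging of the paper's rank-over-$\Z_{2}[U]$ count, and your detour through $\widetilde{C}_{F}(D)\simeq C_{F}(D)\otimes V$ is harmless bookkeeping.
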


\noindent
Thus, the $E_{2}$ page being torsion-free would prove that it is in fact HOMFLY-PT homology.

\begin{proof}

We know that the basepoint-filtered homology $H_{*}(H_{*}(C_{F}(D), d_{0}^{f}), (d_{1}^{f})^{*})$ splits over cycles $Z$, and that the contribution of a non-empty an admissible cycle $Z$ is 

\[ H_{H}(D-Z, n(Z)) \]

\noindent
where $n(Z)$ is the number of turns made by the cycle $Z$, and $H_{H}(D,n)$ is the HOMFLY-PT homology of $D$ reduced $n$ times, at least one on each component of $D$ (see Section \ref{filterededgemaps}).

Thus, the contribution from the empty-cycle is the full HOMFLY-PT homology $H_{H}(D)$, while the contribution from the remaining cycles is finite-dimensional over $\Z_{2}$. It follows that even after taking homology with respect to the unfiltered edge maps, the rank over $\Z_{2}[U]$ will be the rank of $H_{H}(D)$.

Note that the non-empty cycles may not be torsion, as multiplication by $U$ may map into the empty cycle. This can be seen in the example of the unknot. This ``filling in" of the cycles underneath the empty cycle is why the $q$-grading is not preserved in the map above. In fact, we don't expect the $q$-grading to be preserved, as we need the extra homology from the non-empty cycles to provide the grading shift that maps $a \mapsto aq$ on the polynomial level.

\end{proof}

\begin{cor}

The reduced theory is given by 

\[ E_{2}(\overline{C}_{F}(D)) \cong \overline{H}_{H}(D) \oplus T' \]

\noindent
where $T'$ is the homology of the mapping cone 

\[T \xrightarrow{U} T \]

\noindent
In particular, there is a rank inequality 

\[ dim(E_{2}(\overline{C}_{F}(D))) \ge dim( \overline{H}_{H}(D)) \]

\end{cor}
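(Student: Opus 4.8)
The plan is to identify $E_2(\overline{C}_F(D))$ with the homology of the mapping cone of $U_i$ acting on $E_2(C_F(D))$, and then feed in the splitting $E_2(C_F(D)) \cong H_H(D) \oplus T$ proved in the theorem. First I would unwind the definition: $\overline{C}_F(D)$ is the mapping cone $C_F(D)\{-1,-1,-2\} \xrightarrow{U_i} C_F(D)$ in which the connecting map is an \emph{edge map}, i.e. it raises $gr_{cube}$ by exactly $1$. Hence, with respect to the cube filtration, the filtration-preserving part of the differential on $\overline{C}_F(D)$ is just $d_0 \oplus d_0$ on the two copies of $C_F(D)$, so $E_1(\overline{C}_F(D)) \cong E_1(C_F(D))^{\oplus 2}$, and (since $U_i$-multiplication commutes with $d_0$ and so descends to $E_1$) the induced $d_1$ is precisely the total differential of the mapping cone of $U_i \colon (E_1(C_F(D)), d_1^*) \to (E_1(C_F(D)), d_1^*)$. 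Taking homology, $E_2(\overline{C}_F(D))$ is the homology of this mapping cone of chain complexes.

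Next I would invoke the long exact sequence of a mapping cone. Writing $M := E_2(C_F(D))$ and letting $U$ denote multiplication by the edge variable $U_i$ on $M$ --- which by Lemma \ref{4.12} is the canonical $\Z_2[U]$-action, so the cone really is the "reduction" of $E_2(C_F(D))$ --- the homology of the cone sits in a short exact sequence of graded vector spaces (with the usual mapping-cone degree shift on the kernel term)
\[ 0 \longrightarrow \operatorname{coker}(U\colon M \to M) \longrightarrow E_2(\overline{C}_F(D)) \longrightarrow \ker(U\colon M \to M) \longrightarrow 0 . \]
Since these are all $\Z_2$-vector spaces the sequence splits, and the splitting can be performed in each $(gr_h,gr_v)$-bidegree separately, so $E_2(\overline{C}_F(D)) \cong \operatorname{coker}(U|_M) \oplus \ker(U|_M)$ as graded vector spaces.

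Then I would substitute $M \cong H_H(D) \oplus T$ as graded $\Z_2[U]$-modules, with $H_H(D)$ free over $\Z_2[U]$ and $T$ its torsion submodule. On the free summand $U$ is injective (by freeness, or by Lemma \ref{injective}), with $\ker(U|_{H_H(D)})=0$ and $\operatorname{coker}(U|_{H_H(D)}) \cong H_H(D)/(U) \cong \overline{H}_H(D)$, using that the middle HOMFLY-PT theory is the reduced one with a variable adjoined. Hence $E_2(\overline{C}_F(D)) \cong \overline{H}_H(D) \oplus \operatorname{coker}(U|_T) \oplus \ker(U|_T)$; but $\operatorname{coker}(U|_T) \oplus \ker(U|_T)$ is exactly the homology of the two-term complex $T \xrightarrow{U} T$, namely $T'$, which gives $E_2(\overline{C}_F(D)) \cong \overline{H}_H(D) \oplus T'$. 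The rank inequality then follows at once, since all three spaces are finite-dimensional over $\Z_2$ and $\dim E_2(\overline{C}_F(D)) = \dim \overline{H}_H(D) + \dim T' \ge \dim \overline{H}_H(D)$.

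The step I expect to be the main obstacle is the first one: verifying that forming the mapping cone commutes with passing to the $E_1$ page of the cube filtration, i.e. that the connecting map $U_i$ contributes nothing to $d_0$ and nothing to the higher $d_j$ with $j \ge 2$. This is exactly where the convention that a reducing edge map raises $gr_{cube}$ by precisely $1$ is used, and it is also what makes the grading normalization of $\overline{C}_F(D)$ come out right. Once this is pinned down, the remainder is bookkeeping with the mapping-cone long exact sequence together with the module structure supplied by the theorem and Lemma \ref{4.12}.
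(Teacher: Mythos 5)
Your proof is correct and follows essentially the argument the paper intends (the corollary is stated without an explicit proof, but its phrasing via the cone $T \xrightarrow{U} T$ presupposes exactly your route): identify $E_{2}(\overline{C}_{F}(D))$ with the homology of the mapping cone of $U$ on $E_{2}(C_{F}(D))$ using that the reducing map raises $gr_{cube}$ by $1$, then apply the mapping-cone short exact sequence, the splitting $E_{2}(C_{F}(D)) \cong H_{H}(D) \oplus T$ with $H_{H}(D)$ free, and $H_{H}(D)/U \cong \overline{H}_{H}(D)$. No gaps; the step you flag as delicate is handled correctly.
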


\bibliography{TriplyGradedHomology}{}
\bibliographystyle{plain}

\end{document}